\newtheorem{theorem}{Theorem} 	      	      	                              
\newtheorem{corollary}[theorem]{Corollary}     	      	      	      	      
\newtheorem{lemma}[theorem]{Lemma}     	       	      	      	      	      
\newtheorem{proposition}[theorem]{Proposition} 	      	      	      	      
\newtheorem{definition}[theorem]{Definition} 	      	      	                
\newtheorem*{remark}{Remark}                                                  
\numberwithin{equation}{section}                                              
\numberwithin{theorem}{section}                                               
\newcommand{\mc}[1]{\mathcal{#1}}                                             
\newcommand{\R}{\mathbb{R}}                                                   
\newcommand{\Sph}{\mathbb{S}}                                                 
\newcommand{\grad}{\nabla^\sharp}                                             
\newcommand{\nasla}{\slashed{\nabla}}                                         
\newcommand{\ui}{\bar{u}}
\newcommand{\vi}{\bar{v}}
\newcommand{\ti}{\bar{t}}
\newcommand{\ri}{\bar{r}}
\newcommand{\ffi}{\bar{f}}
\newcommand{\hhi}{\bar{h}}
\newcommand{\gi}{\bar{g}}
\newcommand{\beq}{\begin{equation}}
\newcommand{\eeq}{\end{equation}}
\begin{document}


\title[Global uniqueness theorems]{Global uniqueness theorems for linear and nonlinear waves}

\author{Spyros Alexakis}
\address{Department of Mathematics\\ 
University of Toronto\\
40 St George Street Rm 6290\\
Toronto, ON M5S 2E4\\ Canada}
\email{alexakis@math.utoronto.ca}

\author{Arick Shao}
\address{Department of Mathematics\\
South Kensington Campus\\
Imperial College London\\
London SW7 2AZ\\ United Kingdom}
\email{c.shao@imperial.ac.uk}


\begin{abstract}
We prove a unique continuation from infinity theorem for regular waves of the form $[ \Box + \mc{V} (t, x) ]\phi=0$.
Under the assumption of no incoming and no outgoing radiation on specific halves of past and future null infinities, we show that the solution must vanish everywhere.
The ``no radiation" assumption is captured in a specific, \emph{finite} rate of decay which in general depends on the $L^\infty$-profile of the potential $\mc{V}$.
We show that the result is optimal in many regards.
These results are then extended to certain power-law type nonlinear wave equations, where the order of decay one must assume is independent of the size of the nonlinear term.
These results are obtained using a new family of global Carleman-type 
estimates on the exterior of a null cone.
A companion paper to this one explores further applications of these new estimates to such nonlinear waves.
\end{abstract}

\setcounter{tocdepth}{2}

\maketitle

\tableofcontents

\section{Introduction} \label{sec:intro}

This paper presents certain global unique continuation results for linear and nonlinear wave equations.
The motivating challenge is to investigate the extent to which globally regular waves can be reconstructed from the radiation they emit towards (suitable portions of) null infinity.
We approach this in the sense of uniqueness: if a regular wave emits no radiation towards appropriate portions of null infinity, then it must vanish.

The belief that a lack of radiation emitted towards infinity should imply the triviality  of the underlying solution has been implicit in the physics literature for many classical fields.
For instance, in the case of linear Maxwell equations, early results in this direction go back at least to \cite{papap:nonradiative}.
Moreover, in general relativity, the question whether non-radiating gravitational fields must be trivial (i.e., stationary) goes back at least to \cite{papap:1957}, in connection with the possibility of time-periodic solutions of Einstein's equations.
The presumption that the answer must be affirmative under suitable assumptions underpins many of the central stipulations in the field; see for example the issue of the final state in \cite{haw_el:gr}. 

We note that the reconstruction of \emph{free} waves in the Minkowski space-time from their radiation fields (for smooth enough initial data) is classically known.
For instance, this has been well-studied in the context of the scattering theory of Lax and Phillips \cite{lax_phil:scatt_theory}.
In odd spatial dimensions, this can be achieved via the explicit representation of the 
 radiation field via the Radon transform of the initial data; in fact, using this method, free waves can be reconstructed from knowledge of their radiation fields on specific halves of future and past null infinities.

Our first goal is to extend this result (in the sense of a uniqueness theorem) to more general wave operators.
We deal here with general, self-adjoint linear wave equations and some nonlinear wave equations over Minkowski spacetime,
\begin{align*}
\mathbb{R}^{n+1} = \{ (t, x) \mid t \in \R \text{, } x = (x^1, \dots, x^n) \in \R^n \} \text{.}
\end{align*}
Our analysis is performed in the exterior region $\mc{D}$ of a light cone.
Roughly, we show that if a solution $\phi$ of such a wave equation decays faster toward null infinity in $\mc{D}$ than the rate enjoyed by free waves (with smooth and rapidly decaying initial data),\footnote{In other words, $\phi$ vanishes at infinity to a given \emph{finite} order.} then $\phi$ itself must vanish on $\mc{D}$.
Our results are essentially global in nature, in that they assume a solution which is smooth on the entire domain $\bar{\mc{D}}$.

\subsubsection*{\nopunct}
\hspace{0.3pc}
In the context of earlier investigations in the physics literature, one generally made sufficient regularity assumptions at infinity to \emph{derive}  vanishing of the 
solution to infinite order at null infinity.
The vanishing of the field can then be derived under the additional assumption of analyticity near a portion of future  null infinity; see, for instance, \cite{bic_scho_tod:time_per_sc:1, bic_scho_tod:time_per_sc:2, papap:1957, papap:1958:1, papap:1958:2}.
However, the assumption of analyticity cannot be justified on physical grounds; in fact, the recent work \cite{io_kl:killing} suggests that the local argument near a piece of null infinity fails without that assumption.

Consequently, it is of interest to ask whether this analyticity condition can be removed in certain settings.
This was accomplished in earlier joint work with V. Schlue, \cite{alex_schl_shao:uc_inf}, where the authors proved that the assumption of vanishing to infinite order at (suitable parts of both) null infinities does imply the vanishing of the solution near null infinity, even without assuming analyticity.
\footnote{In fact, it was shown that the parts of null infinity where one must make this assumption depend strongly on the mass of the background spacetime.}

One can next ask whether the infinite-order vanishing assumption can be further relaxed.
However, straightforward examples in the Minkowski spacetime show that if one does not assume regularity of a wave in a suitably large portion of spacetime, then unique continuation from infinity will fail \emph{unless} one assumes vanishing to infinite order.
Indeed, considering any finite number $k \in \mathbb{N}$ of iterated (spatial) Cartesian derivatives of the Green's function $r^{2-n}$, we obtain a (time-independent) solution of the free wave equation $\Box \phi=0$ which decays towards null infinity at the rate $r^{2-n-k}$ and is regular everywhere, except for the time axis.
 
However, the above still leaves open the question of whether the infinite-order vanishing 
condition can be replaced by vanishing to finite order, if \emph{in addition} one assumes the 
solution to be regular in a suitably large domain which rules out the aforementioned 
counterexamples.
Obviously, one must assume faster decay towards past and future null infinity ($\mc{I}^-$ and $\mc{I}^+$, respectively) than that enjoyed by free linear waves.
The main theorems of this paper derive precisely such results, for self-adjoint linear and a class of nonlinear wave operators over Minkowski spacetime.

\subsubsection*{\nopunct}
\hspace{0.3pc}
Our first result, Theorem \ref{thm.uc_finite}, applies to linear wave equations of the form
\beq
\label{eq} [ \Box + \mc{V} (t, x) ] \phi = 0 \text{.}
\eeq 
Informally speaking, we show that if the potential $\mc{V}$ decays and satisfies suitable $L^\infty$-bounds, and if the solution $\phi$ decays faster on $\mc{D}$ toward null infinity than generic solutions of the free wave equation, then $\phi$ must vanish everywhere on $\mc{D}$.
The required rate of decay for $\phi$ depends on the $L^\infty$-profile of $\mc{V}$---the larger the potentials $\mc{V}$ are assumed to be, the faster we must assume that the solution decays towards infinity.\footnote{The assumed decay, however, remains of finite order.}
Furthermore, we show in Proposition \ref{thm.uc_opt} that this result is essentially optimal for the most general class of decaying potentials $\mc{V}$.
 
A second result, Theorem \ref{thm.uc_strong}, deals with the same equations, but shows that if $\mc{V}$ satisfies a suitable monotonicity property, then it suffices to only assume a specific decay for $\phi$ independent of the $L^\infty$-size of $\mc{V}$.
Moreover, this result generalizes to a class of nonlinear wave equations that includes the usual power-law (defocusing and focusing) nonlinear wave equations; see Theorems \ref{thm.uc_focusing} and \ref{thm.uc_defocusing}.

\begin{figure}
\caption{The null cone $\mc{N}$ and the exterior domain $\mc{D}$ in the Penrose diagram}
\label{fig.penrose}
\includegraphics[width=130pt]{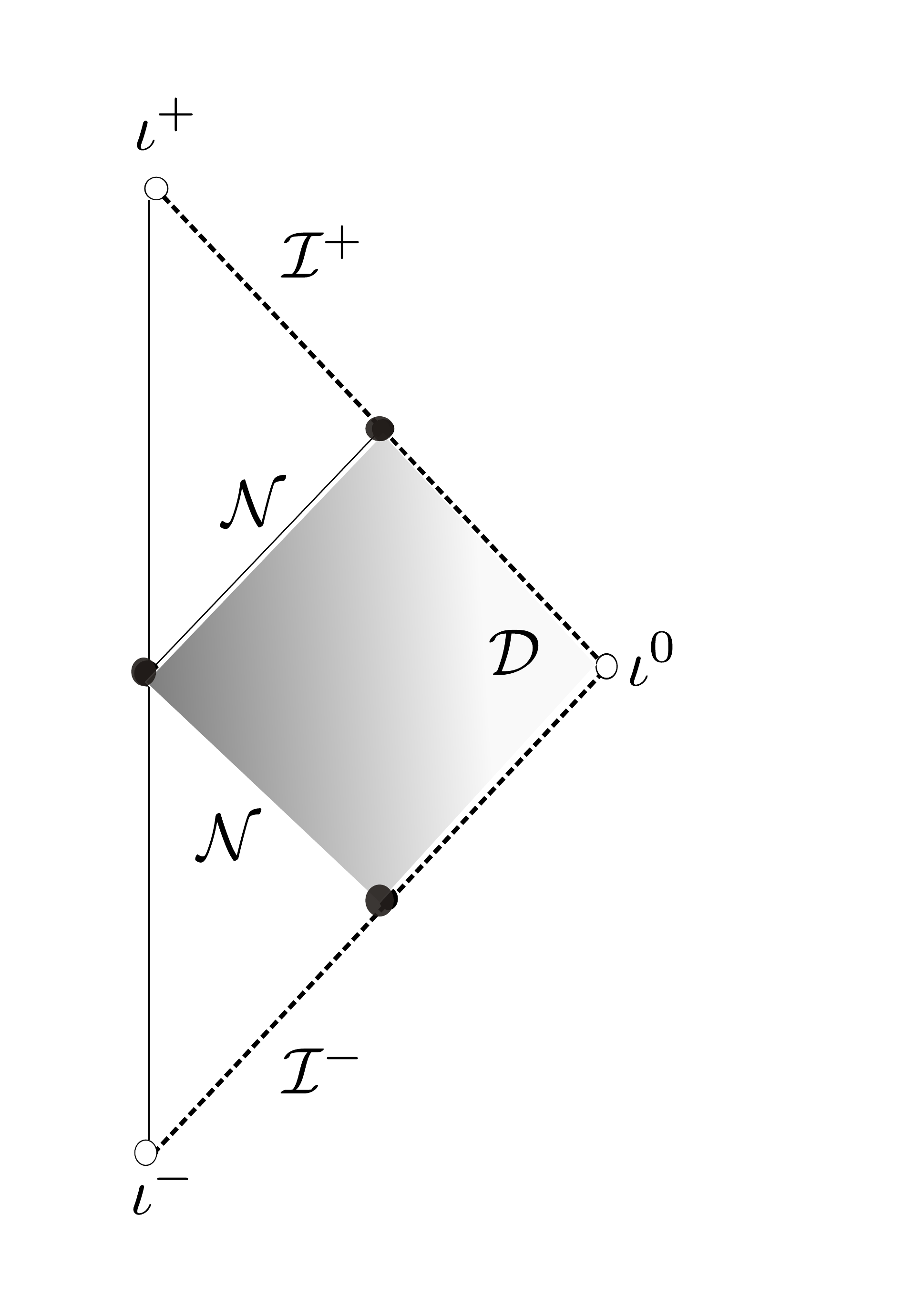}
\end{figure}

The method of proof relies on new 
weighted multiplier estimates applied to a
suitable conjugation of the relevant  equations
 over the entire domain $\mc{D}$.
In a companion paper, \cite{alex_shao:uc_nlw}, we develop these estimates further to derive localized estimates for these classes of wave equations inside time cones.
These estimates are then applied toward understanding the profile of energy concentration near 
singularities.

The precise statements of our unique continuation results are in Section \ref{sec:intro_results}.

\subsubsection*{\nopunct}
\hspace{0.3pc}
As discussed extensively in the introduction of \cite{alex_schl_shao:uc_inf}, both the results here and in \cite{alex_schl_shao:uc_inf} can be considered as strengthenings of some classical results on positive or zero $L^2$-eigenvalues of elliptic operators
\begin{equation} \label{eq.ell} L := - \Delta - \mc{V} \text{,} \end{equation}
which go back to \cite{agmon:lower, kato:growth, mesh:inf_decay, mesh:diff_ineq, sim:pos_eigen}. 
Furthermore, we can also see our results as a strengthening of known work on negative eigenvalues of $L$.

Recall that eigenfunctions of $L$, i.e., $L^2$-solutions of
\begin{equation} \label{eq.eigen} L \psi = \lambda \psi \text{,} \qquad \lambda \in \mathbb{R} \text{,} \end{equation}
yield solutions of the corresponding wave operator
\begin{equation} \label{eq.ell_box} ( \Box + \mc{V} ) \phi = 0 \end{equation}
(with $\mc{V}$ extended to $\R^{n+1}$ as a constant-in-time function).
For positive or zero eigenvalues $\lambda \ge 0$, if $\psi$ solves \eqref{eq.eigen}, then the time-periodic or static solutions
\[ \phi (t, x) := e^{\pm i \sqrt{\lambda} t} \psi(x) \]
solve \eqref{eq.ell_box}.
On the other hand, negative eigenvalues $\lambda < 0$ yield exponentially growing and decaying solutions of \eqref{eq.ell_box} of the form
\[ \phi (t, x)=e^{\pm \sqrt{-\lambda} t} \psi (x) \]

Therefore, any theorem we prove here must incorporate the known non-existence results for positive, zero, and negative eigenvalues of elliptic operators of the form \eqref{eq.ell} for the potentials $\mc{V}$ that we consider here.
Moreover, the known results on the non-existence, or possible decay profile, of such eigenfunctions show how many of the assumptions we are making cannot be relaxed in an essential way.

As shown in the above references, for $\lambda>0$, any corresponding $L^2$-eigenfunction $\psi$ (defined locally near infinity) must in fact vanish to infinite order at infinity.
This then implies (see \cite{hor:lpdo2, hor:lpdo4}) that $\psi=0$ in that neighborhood of infinity.

For the case $\lambda=0$, the same result holds, however the vanishing to infinite order at infinity must be \emph{assumed} and \emph{cannot} be derived.
There exist straightforward examples of solutions $\psi$ to $L \psi = 0$  (for suitable potentials $\mc{V}$) over $\mathbb{R}^n$ which vanish at infinity to any prescribed order $k > 0$, yet $\psi \not\equiv 0$.
(We review such examples in brief in Section \ref{sec:uc_opt}).
Yet, these examples have $\mc{V}$ large (in $L^\infty$) in comparison to the assumed order of vanishing $k$.
In particular, these examples yield nontrivial static solutions to the corresponding wave equation.
These examples thus show that it is \emph{necessary} to assume bounds on the $L^\infty$-profile of the potential $\mc{V}$ which depend on the assumed order of vanishing of the solution at null infinity, in order to conclude the global vanishing of the solution. 

Finally, for the case $\lambda < 0$, we recall that \cite[Theorem 4.2]{mesh:diff_ineq} strengthens a result of Agmon for $L$ with 
$\mc{V} = O(r^{-1-\epsilon})$, where $\epsilon>0$ and $r$ is the Euclidean distance from the origin, to derive that a non-trivial solution $\psi$ of $L \psi = -k^2 \psi$, defined in a neighborhood of spatial infinity, must satisfy
\[ \psi = e^{-k r} r^{-\frac{n - 1}{2}} [ f (\omega) + \varphi(r, \omega) ] \text{,} \]
where
\[ f \in L^2 ( \Sph^{n-1} ) \text{,} \qquad f \not\equiv 0 \text{,} \qquad \int_{ \Sph^{n-1} } | \varphi (r, \omega) |^2 d \omega = O ( r^{-2\gamma} ) \text{,} \qquad \gamma\in (0,\epsilon) \text{.} \]
As discussed above, $\psi(x)$ then defines a solution $\phi (t,x)$ of the wave equation \eqref{eq.ell_box},
\[ \phi (t, x) := e^{\pm k \cdot t} \psi (x) = e^{ k (\pm t - r ) } r^{-\frac{n-1}{2}} [ f ( \omega ) + \varphi(r, \omega) ] \text{.} \]

Note that such solutions will decay exponentially on one of the halves
\[ \mc{I}^+_0 := \{u \le 0 \text{, } v = +\infty \} \text{,} \qquad \mc{I}^-_0 := \{ v \ge 0 \text{, } u = -\infty \} \text{,} \] 
of future and past null infinities $\mc{I}^+, \mc{I}^-$, while having a finite, non-zero radiation field $e^{-2v} f (\omega)$, $e^{-2u} f(\omega)$ on the other.
The condition $f \not\equiv 0$ can precisely be seen as a unique continuation statement: if the eigenfunction $\psi$ gave rise to a wave $\phi$ of vanishing radiation, it would have to vanish itself.
We remark that this discussion also shows that one cannot hope to show uniqueness of solutions 
to wave equations in the form \eqref{eq} for general smooth potentials $\mc{V}$ by even 
assuming infinite order-vanishing on the entire past null infinity.

\subsection{The Main Results} \label{sec:intro_results}

Recall the Minkowski metric on $\R^{n+1}$, given by
\begin{align}
\label{eq.met_mink} g := -dt^2 + dr^2 + r^2 \mathring{\gamma} \text{,} \qquad t \in \R \text{,} \quad r \in [0, \infty) \text{,}
\end{align}
where $r = | x |$, and where $\mathring{\gamma}$ is the round metric on $\Sph^{n-1}$.
In terms of null coordinates,
\begin{align}
\label{eq.null_coord} u := \frac{1}{2} (t - r) \text{,} \qquad v := \frac{1}{2} (t + r) \text{,}
\end{align}
the Minkowski metric takes the form
\begin{align}
\label{eq.met_phys} g = - 4 du dv + r^2 \mathring{\gamma} \text{.}
\end{align}

We use the usual notations $\partial_u, \partial_v$ to refer to derivatives with respect to the $u$-$v$-spherical (i.e., null) coordinates.
For the remaining spherical directions, we use $\nasla$ to denote the induced connections for the level spheres of $(t, r)$.
In particular, we let $| \nasla \phi |^2$ denote the squared ($g$-)norm of the spherical derivatives of $\phi$:
\begin{align}
\label{eq.norm_sph} | \nasla \phi |^2 := g ( \nasla \phi, \nasla \phi ) = r^{-2} \cdot \mathring{\gamma} ( \nasla \phi, \nasla \phi ) \text{.}
\end{align}

Our main results deal with solutions of wave equations in the exterior of the double null cone about the origin,
\begin{align}
\label{eq.outer_region} \mc{D} := \{ Q \in \R^{n+1} \mid | t (Q) | < | r (Q) | \} \text{.}
\end{align}
In our geometric descriptions, we will often refer to the standard Penrose compactification of Minkowski spacetime, see Figure \ref{fig.penrose}, as this provides our basic intuition of the structure of infinity.
With this in mind, we note the following:
\begin{itemize}
\item $\mc{D}$ is the diamond-shaped region in Minkowski spacetime bounded by the null cone about the origin and by the outer half of null infinity $\mc{I}^\pm$.

\item This boundary of $\mc{D}$ has four corners: the origin, spacelike infinity $\iota^0$, and the ``midpoints" of future and past null infinity.
\end{itemize}

\subsubsection{Linear Wave Equations}

Recall if $\phi$ is a free wave, i.e., $\Box \phi \equiv 0$, then the radiation field of $\phi$ at future and past null infinities is captured (up to a constant depending on $u, v$ respectively) by the limit of
\begin{align}
\label{eq.radiation_field_gen} \mc{R} (\phi) := (1 + |u|)^\frac{n-1}{2}  (1 + |v|)^\frac{n-1}{2} \phi \text{.}
\end{align}
This can be seen, for example, via the Penrose compactification by solving the corresponding wave equation on the Einstein cylinder $\R \times \Sph^n$.
We briefly explain the process here; see \cite[Section 11.1]{wald:gr} for details, or \cite{fr:notes_wave} for more general settings.

Invoking the conformal transformation of the Minkowski metric $g$,
\[ \tilde{g} := \Omega^2 g \text{,} \qquad \Omega^2 = 4 ( 1 + u^2 )^{-1} ( 1 + v^2 )^{-1} \text{,} \]
and invoking the conformal covariance of the wave operator on Minkowski, we find that the function $\tilde{\phi} := \Omega^{ - \frac{n-1}{2} } \phi$ solves the linear wave equation
\[ \Box_{ \tilde{g} } \tilde{\phi} - \frac{n - 1}{ 4n } R_{ \tilde{g} } \tilde{\phi} = 0 \text{,} \]
where $R_{ \tilde{g} }$ denotes the (constant) scalar curvature of $\tilde{g}$.
Now, the conformally changed metric $\tilde{g}$ over $\R^{n+1}$ corresponds to a relatively compact domain in $\R \times \Sph^n$ with piecewise smooth boundary---see \cite[Figure 11.1]{wald:gr}.
Null infinities $\mc{I}^+, \mc{I}^-$ correspond to the smooth portions of the boundaries of this domain.

Consequently, when $( \phi|_{t=0}, \partial_t \phi|_{t=0} )$ is sufficiently smooth and decays 
sufficiently rapidly at spatial infinity, one can solve the above wave equation for $\tilde{\phi}$ 
on the entire Einstein cylinder, since the initial data of $\tilde{\phi}$ will be smooth enough on 
$\mathbb{S}^n\times \{0\}$. The asymptotic behavior of $\phi$ at $\mc{I}^\pm$ can be read 
off by restricting $\tilde{\phi}$ to $\mc{I}^\pm$.
In particular, these restrictions correspond (again, up to a constant) to the radiation fields for 
$\phi$ at $\mc{I}^\pm$ defined in \cite{fr:rad_scatt}.

By the same argument, these asymptotics also hold for linear waves with suitably decaying potentials, and suitable nonlinear waves with small initial data; see \cite{chr:null_cond}.

\subsubsection*{\nopunct}
\hspace{0.3pc}
Our first result deals with solutions of linear wave equations of the form
\begin{align}
\label{eq.free_wave_perturb} \Box \phi + \mc{V} \phi = 0 \text{,}
\end{align}
over $\mc{D}$, with general, suitably decaying potentials $\mc{V}$ (with no sign or monotonicity properties), satisfying a bound $\| \mc{V} \|_{ L^\infty ( \mc{D} ) } \leq M$.
We show that when a solution $\phi$ of \eqref{eq.free_wave_perturb} decays towards the outer half of $\mc{I}^\pm$ at a \emph{faster rate}---\emph{how much faster depends in particular on $M$}---than in \eqref{eq.radiation_field_gen}, then $\phi$ must in fact vanish.

\begin{theorem} \label{thm.uc_finite}
Suppose $\phi \in \mc{C}^2 (\mc{D})$ satisfies the differential inequality
\begin{align}
\label{eq.wave_inequality} | \Box \phi | \leq | \mc{V} | | \phi | \text{,} \qquad \mc{V} \in \mc{C}^0 ( \mc{D} ) \text{,}
\end{align}
and suppose $\phi$ satisfies, for some $\beta > 0$, the decay condition
\footnote{The assumption \eqref{eq.wave_decay} can be replaced by corresponding conditions for weighted fluxes through level sets of $f$. This will become apparent from the proof of the theorem.}
\begin{align}
\label{eq.wave_decay} \sup_{ \mc{D} } \left\{ [ ( 1 + |u| ) ( 1 + |v| ) ]^\frac{n - 1 + \beta}{2} ( | u \cdot \partial_u \phi | + | v \cdot \partial_v \phi | ) \right\} < \infty \text{,} \\
\notag \sup_{ \mc{D} \cap \{ |uv| < 1 \} } \left\{ [ ( 1 + |u| ) ( 1 + |v| ) ]^\frac{n - 1 + \beta}{2} | u v |^\frac{1}{2} | \nasla \phi | \right\} < \infty \text{,} \\
\notag \sup_{ \mc{D} } \left\{ [ ( 1 + |u| ) (1 + |v|) ]^\frac{n - 1 + \beta}{2} | \phi | \right\} < \infty \text{.}
\end{align}
Assume in addition that $\mc{V}$ satisfies, for some $0 < p < \beta$, the uniform bound
\begin{equation} \label{eq.uc_potential_decay} | \mc{V} | \leq B p \cdot \min ( \beta - p, p ) \cdot \min ( | uv |^{ -1 + \frac{p}{2} }, | uv |^{ -1 - \frac{p}{2} } ) \text{,} \end{equation}
for some universal constant $B > 0$.
Then, $\phi$ vanishes everywhere on $\mc{D}$.
\end{theorem}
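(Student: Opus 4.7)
The plan is to deduce Theorem \ref{thm.uc_finite} from a new global Carleman-type inequality on $\mc{D}$ that quantifies how waves with bounded $|\Box\phi|/|\phi|$ must grow into the interior if they decay rapidly at null infinity. Since on $\mc{D}$ we have $u < 0 < v$, it is natural to set $f := -uv > 0$, so that $f \to 0$ on $\partial\mc{D}\setminus\{\iota^0\}$ (the null cone about the origin together with null infinity) and $f \to \infty$ toward $\iota^0$. The level sets of $f$ are the hyperboloids that foliate $\mc{D}$, and the bound \eqref{eq.uc_potential_decay} is precisely expressed in terms of $f$. I would introduce a two-parameter Carleman weight of the form $e^{F_p}$ with $F_p = F_p(f)$ tuned so that $F_p$ decays as $f \to \infty$ (matching the decay rate \eqref{eq.wave_decay} enjoyed by $\phi$) while behaving well as $f \to 0$; schematically one tries $F_p \sim -a f^{p/2} - b f^{-p/2}$, so that both the behavior as $f\to\infty$ and as $f\to 0$ contribute positive bulk terms after integration by parts.

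Next, I would conjugate by $e^{F_p/2}$ together with the conformal factor $r^{(n-1)/2}$ that normalizes the radiation field \eqref{eq.radiation_field_gen}, i.e.\ set $\psi := e^{F_p/2} r^{(n-1)/2} \phi$, and derive a modified wave equation for $\psi$. Applying a well-chosen vector-field multiplier compatible with the null foliation and integrating over $\mc{D}$ should yield, after discarding manifestly nonnegative quadratic terms, an inequality of the form
\begin{equation*}
\int_{\mc{D}} e^{F_p} \mc{W}_p |\phi|^2 \, dV \;\lesssim\; \int_{\mc{D}} e^{F_p} |\Box\phi|^2 \, dV \;+\; \mc{B},
\end{equation*}
where $\mc{W}_p \gtrsim p \cdot \min(\beta-p, p) \cdot \max(|uv|^{1-p/2}, |uv|^{1+p/2})^{-1}$ is a positive weight whose size is \emph{exactly calibrated} against \eqref{eq.uc_potential_decay}, and $\mc{B}$ collects boundary contributions at the null cone $\partial\mc{D}$ and at null/spacelike infinity.

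The second step is to verify that the decay hypothesis \eqref{eq.wave_decay} is sharp enough to make $\mc{B}$ vanish. At infinity, $\phi$, $u\partial_u\phi$, $v\partial_v\phi$ decay like $[(1+|u|)(1+|v|)]^{-(n-1+\beta)/2}$; combined with the exponential decay of $e^{F_p}$ as $f\to\infty$ and the polynomial prefactors arising from $dV$ and the multiplier, the integrals over spheres at infinity (on both halves of $\mc{I}^\pm$ and at $\iota^0$) tend to zero. On the null-cone side of $\partial \mc{D}$, the factor $|uv|^{1/2}$ appearing in the second line of \eqref{eq.wave_decay} absorbs the potential blow-up of $|\nasla\phi|$, and the weight $e^{F_p}$ degenerates fast enough there to kill the remaining boundary terms. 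Once all boundary pieces are controlled, substituting $|\Box\phi|\le |\mc{V}||\phi|$ and invoking \eqref{eq.uc_potential_decay} gives
\begin{equation*}
\int_{\mc{D}} e^{F_p} |\mc{V}|^2 |\phi|^2 \, dV \;\le\; B^2 p^2 \min(\beta-p, p)^2 \int_{\mc{D}} e^{F_p} \min(|uv|^{-2+p}, |uv|^{-2-p}) |\phi|^2 \, dV,
\end{equation*}
and choosing the universal constant $B$ small enough so that this right-hand side is absorbed into $\int e^{F_p} \mc{W}_p |\phi|^2$, one concludes $\phi \equiv 0$ on $\mc{D}$.

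The hard part is the construction of the Carleman weight $F_p$. It must simultaneously (i) be globally defined on all of $\mc{D}$ (not just near one component of $\partial\mc{D}$), (ii) yield a geometrically pseudoconvex multiplier identity for $\Box$ with positive bulk weight, (iii) produce a coercivity constant that scales exactly like $p \min(\beta-p, p)$, so that the optimal balance of the two thresholds in \eqref{eq.uc_potential_decay} is realized and the parameter range $0 < p < \beta$ is saturated, and (iv) decay at infinity precisely at the rate implied by \eqref{eq.wave_decay}, so that no nontrivial boundary terms survive. Building such a weight — essentially a global pseudoconvex function for $\Box$ that is compatible with the null foliation of the exterior region and with the radiation-field normalization $r^{(n-1)/2}$ — is the principal novelty of the argument and the technical heart of the paper.
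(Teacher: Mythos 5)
Your high-level skeleton is sound: identify $f=-uv$, conjugate by a weight that is a function of $f$, integrate by parts to produce a positive zero-order bulk, kill the boundary terms using \eqref{eq.wave_decay}, and absorb $|\mc{V}|^2\phi^2$ into the bulk under the calibrated bound \eqref{eq.uc_potential_decay}. However, the weight you propose is structurally wrong, and this is the heart of the argument. You take $F_p\sim -af^{p/2}-bf^{-p/2}$ so that $e^{F_p}$ \emph{decays exponentially} as $f\to\infty$. For unique continuation \emph{from} null infinity, the Carleman weight must \emph{grow} toward the set where $\phi$ is known to decay; it is precisely the competition between the (polynomial) growth of the weight $e^{-2F_+}\sim f^{2(a+b)}$ and the assumed decay $\phi\sim[(1+|u|)(1+|v|)]^{-(n-1+\beta)/2}$ that produces the constraint $2(a+b)<\beta$, and hence ties the admissible size of $\mc{V}$ to $\beta$. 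The paper's weight is $F_\pm=-(a\pm b)\log f-\tfrac{b}{p}f^{\mp p}$, i.e.\ essentially a power $f^{2a}$ with a small monotone correction, matched across $f=1$ via \eqref{eq.carleman_lh_match}; it vanishes (polynomially) on the null cone and grows (polynomially) at infinity.

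Concretely, your proposal fails at two points. First, with $F=af^{p/2}$ (the sign that makes $e^{-2F}$ decay at infinity), one computes $G_F=-(fF')'=-\tfrac{ap^2}{4}f^{p/2-1}<0$, so the zero-order bulk $f|F'|G_F-H_F$ in Proposition~\ref{thm.carleman_gen} is \emph{negative}, and also $F'>0$ violates the inward-directedness hypothesis, so the machinery does not apply at all; flipping the sign restores positivity but makes the weight grow exponentially, which no polynomial decay of $\phi$ can counteract. Second, if an exponentially decaying weight at infinity \emph{did} work, the boundary terms at $\mc{I}^\pm$ and $\iota^0$ would vanish for \emph{any} polynomially bounded $\phi$; the constraint $0<p<\beta$ and the dependence on $\beta$ would disappear, yielding a statement contradicted by the counterexamples of Proposition~\ref{thm.uc_opt}. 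You also write the bulk coefficient as $p\min(\beta-p,p)$, but the correct scaling is $b^2p\sim p\,\min(\beta-p,p)^2$ (the potential then enters squared, so the absorption needs $B^2p\lesssim a\sim\beta$). Finally, the paper's proof also requires splitting $\mc{D}$ at $f=1$ with matched currents, exhausting by the compact regions $\mc{D}^{\sigma,\tau}_{\rho,\omega}$, and the conformal inversion/coarea lemmas to pass to the limits $\rho\searrow0$, $\omega\nearrow\infty$; these steps are not addressed in your sketch.
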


We note that without additional assumptions on $\mc{V}$, the result is essentially optimal.
In regards to the bound \eqref{eq.uc_potential_decay} on $\mc{V}$, this is shown by constructing counterexamples when $\| \mc{V} \|_{ L^\infty (\mc{D}) }$ is too large relative to the rate of decay on $\phi$ at infinity:
 
\begin{proposition} \label{thm.uc_opt}
Given $k > 0$, there exists a sufficiently large $\mc{V} \in \mc{C}^\infty ( \R^{n+1} )$, supported on some region $\{ r \leq R \}$, for which a solution $\phi \in \mc{C}^\infty ( \R^{n+1} )$ to \eqref{eq.free_wave_perturb} exists, is not identically zero on $\mc{D}$, and satifies $| \phi | \lesssim ( 1 + r )^{-k}$.
\end{proposition}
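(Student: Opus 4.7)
The plan is to exhibit $\phi$ as a \emph{static} (time-independent) solution, so that \eqref{eq.free_wave_perturb} reduces to the elliptic identity on $\R^n$: setting $\phi(t, x) := \psi(x)$, the requirement becomes $\Delta \psi + \mc{V} \psi = 0$. The task is then to build a smooth $\psi$ on $\R^n$ that decays at infinity like $r^{-k}$, is harmonic outside a fixed ball $\{ r \leq R \}$, and whose ratio $-\Delta \psi / \psi$ extends to a smooth, compactly supported potential $\mc{V}$; extending $\mc{V}$ to $\R^{n+1}$ as a constant-in-$t$ function then gives the counterexample.

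Fix a nonzero spherical harmonic $Y$ on $\Sph^{n-1}$ of degree $\ell \geq k + n - 2$. On $\R^n \setminus \{0\}$, the function $\psi_{\mr{out}}(r, \omega) := r^{2 - n - \ell} Y(\omega)$ is harmonic and satisfies $|\psi_{\mr{out}}| \lesssim r^{-k}$. Seek $\psi$ of separated form $\psi(x) = P(r) Y(\omega)$, where $P \in \mc{C}^\infty([0, \infty))$ is positive on $(0, \infty)$, equals $r^{2 - n - \ell}$ on $[R, \infty)$, and equals $r^\ell \tilde{P}(r^2)$ near $r = 0$ for some smooth positive $\tilde{P}$; such an interpolation exists by standard smooth matching, since $r^{2-n-\ell}$ is positive on $(0, R]$. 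The form $r^\ell \tilde{P}(r^2)$ near the origin is what guarantees that $\psi$ extends smoothly across $r = 0$, because $r^\ell Y(\omega)$ is a homogeneous harmonic polynomial of degree $\ell$ on $\R^n$.

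A direct computation shows that $\Delta \psi = Q(r) Y(\omega)$ for some smooth radial $Q$, with $Q \equiv 0$ on $\{r \geq R\}$ and $Q(r) = r^\ell \cdot (\text{smooth function of } r^2)$ near the origin; the latter follows since $r^\ell Y$ is harmonic, so only derivatives of $\tilde{P}$ contribute to $\Delta \psi$. Therefore
\[
\mc{V}(x) := - \frac{\Delta \psi(x)}{\psi(x)} = - \frac{Q(r)}{P(r)}
\]
defines a smooth radial function on $\R^n$ supported in $\{ r \leq R \}$: the factors of $r^\ell$ in $Q$ and $P$ cancel at the origin, and $P > 0$ elsewhere on the support of $Q$. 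With $\mc{V}$ extended to $\R^{n+1}$ by $\mc{V}(t, x) = \mc{V}(x)$, the function $\phi(t, x) := \psi(x)$ is the required solution: it lies in $\mc{C}^\infty(\R^{n+1})$, satisfies $|\phi| \lesssim (1 + r)^{-k}$ throughout, and is nonzero on $\mc{D}$, since $\psi$ is nonzero on the open set $\{ r > R \} \cap \{ Y \neq 0 \} \subset \mc{D}$.

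The only delicate step is the smoothness of $\mc{V}$ at the origin, which is handled by the matching of orders of vanishing between $Q$ and $P$ noted above; the rest is bookkeeping. I also observe that the potential so produced has $\| \mc{V} \|_{L^\infty}$ of order $\ell^2 \sim k^2$, driven by the centrifugal term $\ell(\ell + n - 2)/r^2$ in the radial Laplacian inside $\{ r \leq R \}$. This quadratic growth in $k$ matches the role of $\| \mc{V} \|_{L^\infty}$ in the hypotheses of Theorem~\ref{thm.uc_finite} and shows that the bound \eqref{eq.uc_potential_decay} linking the size of $\mc{V}$ to the required decay rate on $\phi$ cannot simply be dropped.
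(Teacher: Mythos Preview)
Your proof is correct and follows essentially the same approach as the paper: reduce to a static solution $\phi(t,x)=\psi(x)$, take $\psi$ of separated form $P(r)Y(\omega)$ with $Y$ a spherical harmonic of high degree $\ell$, interpolate a positive radial profile between the growing mode $r^\ell$ near the origin and the decaying mode $r^{2-n-\ell}$ at infinity, and set $\mc{V}=-\Delta\psi/\psi$. Your treatment of smoothness at the origin via $P(r)=r^\ell\tilde P(r^2)$ (so that $\psi=H(x)\tilde P(|x|^2)$ with $H$ a harmonic polynomial) is in fact cleaner than the paper's, which only asserts $\mc{C}^2$ regularity even though its exponents $q_+=\ell$, $q_-=2-n-\ell$ yield the same smooth construction.
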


For the proof of Proposition \ref{thm.uc_opt}, see Section \ref{sec:uc_opt}.

\begin{remark}
Note \eqref{eq.wave_inequality} implies $\mc{V}$ decays like $r^{-2-p}$ at spatial infinity, but the condition is weaker---$\mc{V}$ decays like $r^{-1 - \frac{p}{2}}$---at null infinities.
The necessity of assuming $| \mc{V} | \lesssim r^{-2-p}$ is already well-known in the elliptic setting to rule out zero eigenvalues, which correspond to time-independent solutions of \eqref{eq.free_wave_perturb}; see \cite{alex_schl_shao:uc_inf} for a discussion.
\end{remark}

\begin{remark}
It is expected that one \emph{has} to assume vanishing on the portions of $\mc{I}^\pm$ 
that lie in the boundary of $\mc{D}$ for the result to hold.
If one assumed vanishing on a smaller portion of $\mc{I}^\pm$, one expects that the examples in \cite{alin_baou:non_unique} can be modified to produce counterexamples to the vanishing of $\phi$; see also the discussions in \cite{alex_schl_shao:uc_inf}.
\end{remark}

\begin{remark}
We note that if one knows that $\phi$ is $\mc{C}^2$-regular on all of $\R^{1+n}$ and thus vanishes on $\bar{\mc{D}}$, one can derive that $\phi$ vanishes in the entire spacetime by standard energy estimates.
We also note that the assumed decay \eqref{eq.wave_decay} of $\phi$ in $\mc{D}$ is a \emph{quantitative} assumption of no incoming radiation from half of past null infinity, and no outgoing radiation in half of future null infinity. 
\end{remark}

\begin{remark}
We note that a different global uniqueness theorem for operators $\Box+\mc{V}$, with $\mc{V} \in L^{\frac{n+1}{2}}$ and vanishing data across a hyperplane, has been obtained in \cite{ken_ruiz_sog:sobolev_unique}.
\end{remark}

On the other hand, if $\mc{V}$ in \eqref{eq.free_wave_perturb} has a certain sign and monotonicity (and we return to the setting of a differential \emph{equation} rather an inequality), then we only require that $\phi$ vanishes an \emph{arbitrarily small power faster} than in \eqref{eq.radiation_field_gen}.
In particular, the required decay for $\phi$ is no longer tied to the $L^\infty$-size of $\mc{V}$.
 
\begin{theorem} \label{thm.uc_strong}
Suppose $\phi \in \mc{C}^2 (\mc{D})$ satisfies
\begin{align}
\label{eq.wave_strong} \Box \phi + V \phi = 0 \text{,} \qquad V \in \mc{C}^1 ( \mc{D} ) \cap L^\infty ( \mc{D} ) \text{,}
\end{align}
and suppose $\phi$ satisfies, for some (small) $\delta > 0$, the decay condition
\begin{align}
\label{eq.wave_decay_delta} \sup_{ \mc{D} } \left\{ [ ( 1 + |u| ) ( 1 + |v| ) ]^\frac{n - 1 + \delta}{2} ( | u \cdot \partial_u \phi | + | v \cdot \partial_v \phi | ) \right\} < \infty \text{,} \\
\notag \sup_{ \mc{D} \cap \{ |uv| < 1 \} } \left\{ [ ( 1 + |u| ) ( 1 + |v| ) ]^\frac{n - 1 + \delta}{2} | u v |^\frac{1}{2} | \nasla \phi | \right\} < \infty \text{,} \\
\notag \sup_{ \mc{D} } \left\{ [ ( 1 + |u| ) (1 + |v|) ]^\frac{n - 1 + \delta}{2} | \phi | \right\} < \infty \text{.}
\end{align}
as well as
\begin{align}
\label{eq.wave_decay_strong} \sup_{ \mc{D} \cap \{ |uv| > 1 \} } \left\{ [ (1 + |u|) (1 + |v|) ]^\frac{n - 1 + \delta}{2} | uv |^\frac{1}{2} V^\frac{1}{2} | \phi | \right\} < \infty \text{,}
\end{align}
Then, if $V$ satisfies, for some $\mu > 0$, the conditions
\begin{align}
\label{eq.uc_strong_mono} V > 0 \text{,} \qquad ( u \partial_u + v \partial_v ) (\log V) > -2 + \mu \text{.}
\end{align}
then $\phi$ vanishes everywhere on $\mc{D}$.
\end{theorem}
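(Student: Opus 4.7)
The plan is to test the equation $(\Box + V)\phi = 0$ against a conjugated multiplier of the form $e^F(D\phi + a\phi)$ and integrate over $\mc{D}$. Here $D := u\partial_u + v\partial_v$ is the dilation, which is conformally Killing for the Minkowski metric ($\mc{L}_D g = 2g$); $F = F(uv)$ is a Carleman weight depending only on $uv$ (note $uv < 0$ throughout $\mc{D}$); and $a$ is a scalar function tuned below. Integrating by parts, the principal wave contribution $e^F \Box\phi \cdot (D\phi + a\phi)$ produces, via the standard deformation-tensor identity for the conformal Killing field $D$, a bulk quadratic form in $(\partial_u\phi, \partial_v\phi, \nasla\phi, \phi)$ whose coefficients depend on $F$, $F'$, and $a$, together with boundary fluxes along the two components of $\partial\mc{D}$. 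A convex choice $F = F(\log|uv|)$ (e.g.\ $F = p\log|uv|$, which gives the constant $D(F) = 2p$ since $D(uv) = 2uv$) renders this bulk form nonnegative in the null-tangential derivatives of $\phi$.

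The potential contribution is handled by the identity
\begin{equation*}
2 \int_{\mc{D}} e^F V \phi \cdot D\phi \, d\mr{vol} = -\int_{\mc{D}} e^F \bigl[ D(V) + D(F)\, V + (n+1)\, V \bigr] \phi^2 \, d\mr{vol} + (\text{bdry}),
\end{equation*}
where I use $\mr{div}_g D = n+1$ and $D(e^F) = D(F)\, e^F$. This is where the monotonicity \eqref{eq.uc_strong_mono} enters decisively: $D(\log V) > -2 + \mu$ gives $D(V) + (n+1)\, V > (n-1+\mu)\, V$, so with $F$ chosen so that $D(F) \geq 0$, the bracket is pointwise at least $(n-1+\mu) V > 0$. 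Moving the resulting term back across the equation supplies a coercive bulk contribution of order $\int e^F \mu V \phi^2$, completely independent of $\|V\|_{L^\infty}$.

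Combining the two pieces, the full identity has the schematic form
\begin{equation*}
\int_{\mc{D}} e^F \bigl\{\text{positive quadratic in } \partial\phi \bigr\} \, d\mr{vol} + \int_{\mc{D}} e^F \bigl\{ c_1 V + c_0 \bigr\} \phi^2 \, d\mr{vol} = (\text{boundary flux}),
\end{equation*}
with $c_1, c_0 > 0$ after the auxiliary multiplier $a\phi$ is chosen to combine the zeroth-order cross-terms (from $\Box\phi \cdot a\phi$ and $V\phi \cdot a\phi$) into a positive coefficient on $\phi^2$---the usual Carleman-type adjustment. The boundary fluxes on the portions of $\mc{I}^\pm$ in $\partial\mc{D}$ vanish thanks to \eqref{eq.wave_decay_delta}, which supplies slightly better than free-wave decay for $\phi$ and its first derivatives, and \eqref{eq.wave_decay_strong}, which is designed precisely to kill the extra flux generated by the potential (hence the $V^{1/2}|\phi|$ factor there). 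On the cone $\{|t|=r\}$, where $\phi$ is $\mc{C}^2$ and the weight $e^F$ is bounded, the fluxes vanish in the standard way. Positivity then forces $\phi \equiv 0$ on $\mc{D}$.

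The main obstacle I expect is the simultaneous tuning of $F$ and $a$ so that the principal quadratic form in $\partial\phi$ is coercive (a pseudoconvexity-type condition along null directions), the zeroth-order coefficient on $\phi^2$ is positive, \emph{and} all boundary fluxes vanish under only the weak decay margin $\delta > 0$. The monotonicity $D(\log V) > -2+\mu$ is precisely what creates the slack in this balance: it decouples the required decay rate of $\phi$ from $\|V\|_{L^\infty}$, whereas in Theorem \ref{thm.uc_finite} the potential had to be absorbed by spending decay through the exponent $p$ tied to the size of $\mc{V}$. Without the monotonicity, the $V\phi^2$ term would push in the wrong direction and one would be forced back to the quantitative trade-off in \eqref{eq.uc_potential_decay}.
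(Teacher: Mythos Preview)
Your proposal is essentially the paper's approach: the multiplier $D\phi + a\phi$ with $D = u\partial_u + v\partial_v = 2\grad f$ and weight $e^F = |uv|^p$ coincide (up to conjugation) with the paper's $S_*\psi$ and $e^{-2F_0} = f^{2a}$, and the paper obtains Theorem~\ref{thm.uc_strong} as the $p=1$ case of Theorem~\ref{thm.uc_focusing} via exactly this computation. One correction worth flagging: with the pure power weight $F_0 = -a\log f$ the level sets of $f$ have \emph{zero} pseudoconvexity ($G_{F_0} = H_{F_0} = 0$), so the principal part produces no strictly positive bulk in $\partial\phi$ and no ``$c_0 > 0$'' term---the entire coercive bulk is the potential contribution $f^{2a}V\Gamma_V\,\phi^2$ with $\Gamma_V = \tfrac{1}{2}D(\log V) + 1 > \mu/2$, which suffices since $V>0$; also, at the cone $\{|t|=r\}$ the weight is not merely bounded but \emph{vanishes}, and that vanishing (together with the decay at infinity along the unbounded hyperboloid $\mc{F}_\rho$) is what kills the inner flux.
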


\begin{remark}
Note that $u \partial_u + v \partial_v$ is precisely the dilation vector field on $\R^{n+1}$, which generates a conformal symmetry of Minkowski spacetime.
\end{remark}

\begin{remark}
In particular, Theorem \ref{thm.uc_strong} applies to
\begin{align}
\label{eq.klein_gordon_neg} \Box \phi + \phi = 0 \text{,}
\end{align}
i.e., the negative-mass Klein-Gordon equation.

In contrast, the conclusion of Theorem \ref{thm.uc_strong} fails for the positive-mass Klein-Gordon equation, as its solutions are known to decay at faster rates than for free waves.
In particular, for smooth and rapidly decaying initial data, solutions decay toward null infinity along any geodesic faster than any power of $r^{-1}$; see \cite{kl:remark_kg}.
\end{remark}
 
The above theorems are obtained by weighted multiplier estimates applied to a suitable conjugation of the linear equations; see Section \ref{sec:carleman}.
While Theorem \ref{thm.uc_finite} is proved separately, Theorem \ref{thm.uc_strong} is a special case of uniqueness results for \emph{nonlinear} wave equations, which we discuss below.

\subsubsection{Nonlinear Wave Equations}
 
The aforementioned estimates are sufficiently robust, so that they can be further adapted for certain classes of nonlinear wave equations.
In fact, they yield stronger results for these nonlinear equations, compared to the general linear case. 
The stronger nature of the estimates is manifested in the fact that the order of vanishing at infinity needs only be slightly faster than a specific rate, regardless of the size of the (nonlinear) potential.
  
The class of equations that we will consider will be generalizations of the usual power-law focusing and defocusing nonlinear wave equations:
\footnote{Also applicable is the case $p = 1$: certain linear wave equations with potential.}
\begin{align}
\label{general-NLW} \Box \phi \pm V (t, x) |\phi|^{p-1} \phi = 0 \text{,} \qquad p \geq 1 \text{,}
\end{align}
with $V \in \mc{C}^1 (\mc{D})$ and $V (t, x) > 0$ for all points in $\mc{D}$.
Surprisingly perhaps, it turns out that one obtains these improved estimates in either the focusing or the defocusing case, depending on $p$ as compared to the \emph{conformal} power. 

\begin{definition} \label{foc-defoc}
We call such equations \emph{focusing} if the sign in \eqref{general-NLW} is $+$, and \emph{defocusing} if the sign in \eqref{general-NLW} is $-$.
\end{definition}
    
\begin{definition} \label{subsup-crit}
We refer to \eqref{general-NLW} as:
\begin{itemize}
\item ``Subconformal", if $p < 1 + \frac{4}{n-1}$.

\item ``Conformal", if $p = 1 + \frac{4}{n-1}$.

\item ``Superconformal", if $p > 1 + \frac{4}{n-1}$.
\end{itemize}
\end{definition}

We now state the remaining unique continuation results, one applicable to subconformal focusing-type equations, and the other to both conformal and superconformal defocusing type equations of the form \eqref{general-NLW}.
 
\begin{theorem} \label{thm.uc_focusing}
Suppose $\phi \in \mc{C}^2 (\mc{D})$ satisfies
\begin{align}
\label{eq.wave_focusing} \Box \phi + V | \phi |^{p - 1} \phi = 0 \text{,} \qquad 1 \leq p < 1 + \frac{4}{n - 1} \text{,} \quad V \in \mc{C}^1 ( \mc{D} ) \cap L^\infty ( \mc{D} ) \text{,}
\end{align}
and suppose $\phi$ satisfies, for some $\delta > 0$, the decay condition \eqref{eq.wave_decay_delta}, and
\begin{align}
\label{eq.wave_decay_focusing} \sup_{ \mc{D} \cap \{ |uv| > 1 \} } \left\{ [ (1 + |u|) (1 + |v|) ]^\frac{n - 1 + \beta}{p + 1} | uv |^\frac{1}{p + 1} V^\frac{1}{p + 1} | \phi | \right\} < \infty \text{.}
\end{align}
Then, if $V$ satisfies, for some $\mu > 0$, the conditions
\begin{align}
\label{eq.uc_focusing_mono} V > 0 \text{,} \qquad ( u \partial_u + v \partial_v ) ( \log V ) > - \frac{n - 1}{2} \left( 1 + \frac{4}{n - 1} - p \right) + \mu \text{.}
\end{align}
then $\phi$ vanishes everywhere on $\mc{D}$.
\end{theorem}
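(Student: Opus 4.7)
My plan is to adapt the Carleman-multiplier framework behind the linear case---Theorem \ref{thm.uc_strong} is precisely the $p=1$ specialization of this result---to absorb the power nonlinearity into a positive bulk term. The scheme is to conjugate $\phi$ by a Carleman weight $e^{F}$, with $F = F(|uv|)$ depending on the dilation-invariant quantity $|uv|$ (recall $uv < 0$ throughout $\mc{D}$), and pair the resulting equation with $e^{2F} X(\phi)$, where $X = u\partial_u + v\partial_v = t\partial_t + r\partial_r$ is the Minkowski dilation. This $X$ is conformal Killing with $\mr{div}\,X = n+1$ and preserves the geometry of $\mc{D}$, so it is the natural multiplier in the presence of the hypothesis \eqref{eq.uc_focusing_mono}. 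One will almost certainly have to include a zeroth-order correction $c\phi$ in the multiplier, as in the standard conformal-scalar-field identity, to produce a coercive linear bulk. The weight $F$ should be chosen large near the cone $\{uv=0\}$ and calibrated at infinity so as to be sharp against the decay rates appearing in \eqref{eq.wave_decay_delta} and \eqref{eq.wave_decay_focusing}.

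The nonlinear term enters the bulk through the following integration by parts. Substituting $\Box\phi = -V|\phi|^{p-1}\phi$,
\[
\int_{\mc{D}} e^{2F} X(\phi)\, V|\phi|^{p-1}\phi\, d\mr{vol} = -\tfrac{1}{p+1}\int_{\mc{D}} |\phi|^{p+1}\,\mr{div}\bigl(e^{2F}VX\bigr)\, d\mr{vol} + \mr{bdry},
\]
where $\mr{div}(e^{2F}VX) = e^{2F}V\bigl[\,2X(F) + (u\partial_u+v\partial_v)\log V + (n+1)\,\bigr]$. Combined with the linear bulk generated by the conformal-Killing identity $\tfrac{1}{2} T_{ab}\pi_X^{ab}$ for $X$ (which produces a term proportional to $(\partial\phi)^2$ and, via the zeroth-order correction, a coercive $|\phi|^{2}$ piece), the effective coefficient in front of $e^{2F}V|\phi|^{p+1}$ is exactly tuned by \eqref{eq.uc_focusing_mono} to be strictly positive of size $\mu$, \emph{provided} $p$ lies strictly below the conformal exponent $1 + \frac{4}{n-1}$. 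The conformal power is precisely where this coefficient degenerates---this is the structural heart of the proof and where both the sub-conformal and focusing assumptions are invoked.

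Next I would dispatch the boundary terms. On the null cone $\{uv = 0\}$, the $\mc{C}^2$-regularity of $\phi$ on $\bar{\mc{D}}$ together with the angular decay line of \eqref{eq.wave_decay_delta} suffices to make all fluxes vanish, provided $F$ is chosen so $e^{2F}$ does not out-grow the vanishing of $\phi$, $\partial\phi$, and $\nasla\phi$ there. At $\mc{I}^{\pm}$, the linear fluxes produced by the dilation multiplier are controlled by the three suprema of \eqref{eq.wave_decay_delta}, the gap $\delta>0$ supplying the needed strict decay; the nonlinear flux, of schematic form $e^{2F}\, V|\phi|^{p+1}\, |X|$, is controlled by \eqref{eq.wave_decay_focusing}, whose exponents $1/(p+1)$ on $V^{1/(p+1)}|\phi|$ and on $|uv|^{1/(p+1)}$ are engineered precisely to balance $(V|\phi|^{p+1})^{1/(p+1)}$ times the weight of the dilation vector $X$. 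Once all these fluxes are killed, the positive bulk forces $\phi \equiv 0$ on $\mc{D}$.

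The main technical obstacle is the simultaneous calibration of $F$ and the zeroth-order correction: (i) the linear bulk must be strong enough to dominate the cross-terms generated by commuting $X$ with $e^F$ at the scale set by $\delta$, uniformly down to the cone; (ii) the weight $e^{2F}$ must be sharp at $\mc{I}^{\pm}$ against the sub-critical rates in \eqref{eq.wave_decay_focusing} without destroying positivity near the cone. The sub-conformal restriction enters critically in (i): beyond the conformal exponent the nonlinear bulk cannot be made uniformly positive without additional structure on $V$, which is precisely the content of the complementary Theorem \ref{thm.uc_defocusing} in the defocusing setting.
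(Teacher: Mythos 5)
Your approach is essentially the one the paper takes. The multiplier you propose---the dilation vector $X = u\partial_u + v\partial_v$ plus a zeroth-order correction, applied after conjugating by a weight depending on $|uv|$---is exactly the paper's $S_\ast\psi$ (note $S = \nabla^\alpha f\nabla_\alpha = \tfrac12 X$, and $\psi = e^{-F}\phi$ with $F = -a\log f$, $f = -uv$, $0 < a < \delta/2$). The integration by parts that moves the nonlinearity into a $|\phi|^{p+1}$ bulk weighted by $\operatorname{div}(f^{2a}VX)$ is the content of Lemma \ref{thm.nonlinear_bulk}, the sign of $\Gamma_V$ is what the subconformal restriction and \eqref{eq.uc_focusing_mono} control, and the flux terms are eliminated via the coarea formulas and \eqref{eq.wave_decay_delta}, \eqref{eq.wave_decay_focusing}.

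One structural misstatement worth correcting, since it colors how you read the remaining obstacles: the linear bulk does \emph{not} produce a coercive $(\partial\phi)^2$ or $\phi^2$ piece here. The level sets of $f$ have exactly zero pseudoconvexity ($\nabla^2 f = \tfrac12 g$, Lemma \ref{thm.f_hessian}), so the deformation-tensor contraction is pure trace and is precisely cancelled by the fixed zeroth-order correction $\tfrac{n-1}{4}\psi$. Moreover, with the pure power weight $F_0 = -a\log f$ one has $G_{F_0} = H_{F_0} = 0$, so the linear zeroth-order bulk vanishes identically. All positivity in Theorem \ref{thm.carleman_nl} comes from the nonlinear term $-\mc{B}^{F_0}_U$, and the cross-term in $\mc{L}_U\psi\, S_\ast\psi$ is absorbed not by a linear bulk but by the negative-definite $2F'|S_\ast\psi|^2$ term via Cauchy--Schwarz (Lemma \ref{thm.ptwise}). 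This is why the result genuinely requires the sign/monotonicity hypothesis on $V$ rather than just smallness, and it dissolves your worry (i): there is no ``calibration'' of $F$ against a linear bulk to perform, since there is no linear bulk to calibrate. The only tuning is $a < \delta/2$ (for the $\mc{I}^\pm$ flux) together with shrinking $a$ to accommodate the $\mu$-gap in \eqref{eq.uc_focusing_mono}; both are straightforward.
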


\begin{remark}
In particular, taking $p = 1$ in Theorem \ref{thm.uc_focusing} results in Theorem \ref{thm.uc_strong}.
\end{remark}
 
\begin{theorem} \label{thm.uc_defocusing}
Suppose $\phi \in \mc{C}^2 (\mc{D})$ satisfies
\begin{align}
\label{eq.wave_defocusing} \Box \phi - V | \phi |^{p - 1} \phi = 0 \text{,} \qquad p \geq 1 + \frac{4}{n - 1} \text{,} \quad V \in \mc{C}^1 ( \mc{D} ) \cap L^\infty ( \mc{D} ) \text{,}
\end{align}
and suppose $\phi$ satisfies, for some $\delta > 0$, the decay condition \eqref{eq.wave_decay_delta}.
Then, if
\begin{align}
\label{eq.uc_defocusing_mono} V > 0 \text{,} \qquad ( u \partial_u + v \partial_v ) ( \log V ) \leq \frac{n - 1}{2} \left( p - 1 - \frac{4}{n - 1} \right) \text{.}
\end{align}
then $\phi$ vanishes everywhere on $\mc{D}$.
\end{theorem}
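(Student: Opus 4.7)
The plan is to adapt the conformal-multiplier strategy used for Theorems \ref{thm.uc_strong} and \ref{thm.uc_focusing}. The dilation field $D=u\partial_u+v\partial_v$ is a conformal Killing vector of Minkowski space satisfying $\nabla_\mu D_\nu+\nabla_\nu D_\mu=2g_{\mu\nu}$ and $\mathrm{div}(D)=n+1$, so that for every $\phi\in\mc{C}^2(\mc{D})$ one has the pointwise identity
\[
\bigl(D\phi+\tfrac{n-1}{2}\phi\bigr)\Box\phi=\nabla_\mu J^\mu[\phi],\qquad J^\mu[\phi]:=T^{\mu\nu}[\phi]D_\nu+\tfrac{n-1}{2}\phi\,\nabla^\mu\phi.
\]
This is the classical conformal-energy identity, which follows from the conformal-Killing property together with the trace formula $g_{\mu\nu}T^{\mu\nu}[\phi]=\tfrac{1-n}{2}g(\nabla\phi,\nabla\phi)$.

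Substituting $\Box\phi=V|\phi|^{p-1}\phi$ and integrating the $D$-derivative of $|\phi|^{p+1}$ by parts---the boundary contribution from this integration by parts along the null hypersurfaces $\{u=0\}\cup\{v=0\}$ vanishes, since $D$ is tangent to them---the identity reads
\[
\int_{\mc{D}}\frac{V|\phi|^{p+1}}{p+1}\Bigl[\tfrac{n-1}{2}\bigl(p-1-\tfrac{4}{n-1}\bigr)-(u\partial_u+v\partial_v)(\log V)\Bigr]\,\mathrm{dvol}=\int_{\partial\mc{D}}J^\mu N_\mu\,d\sigma.
\]
Under hypothesis \eqref{eq.uc_defocusing_mono} together with $p\geq 1+4/(n-1)$, the bulk integrand on the left is pointwise non-negative. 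For the boundary flux on the right, a direct computation in null coordinates combined with a Hardy-type inequality in $u$ (respectively $v$) shows that the flux of $J$ through $\{u=0\}\cup\{v=0\}$ is non-positive; along the outer halves of $\mc{I}^\pm$ and at spacelike infinity the finite-order decay \eqref{eq.wave_decay_delta} kills every term in $J$ in the standard exhaustion limit. Both sides of the identity must therefore vanish.

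This yields $\phi\equiv 0$ on the open subset of $\mc{D}$ where the bulk bracket is strictly positive, together with the vanishing of $\phi$ and its incoming null derivative on $\{u=0\}\cup\{v=0\}$; classical unique continuation then propagates the vanishing to all of $\mc{D}$. In the borderline regime---conformal exponent $p=1+4/(n-1)$ together with $D(\log V)\equiv 0$, where the bulk bracket is identically zero and the preceding argument yields only null-cone vanishing---I would rerun the multiplier argument after conjugating by a Carleman weight $e^{\lambda f(uv)}$ with large $\lambda$. This produces a strictly positive $\lambda$-linear bulk term, at the cost of reintroducing mixed-sign gradient corrections from the Lorentzian metric.

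\textbf{Main obstacle.} The subtlest step is controlling these Lorentzian gradient corrections in the Carleman perturbation: the quadratic form $g(\nabla\phi,\nabla\phi)=-\partial_u\phi\,\partial_v\phi+r^{-2}|\nasla\phi|^2$ is not sign-definite, so the perturbation must be absorbed against the positive scaling factors furnished by $D$ without overwhelming them. This balancing is what pins down both the multiplier constant $\tfrac{n-1}{2}$ and the critical exponent $p=1+4/(n-1)$ in \eqref{eq.uc_defocusing_mono}, and is the heart of the ``new weighted multiplier estimates'' emphasized in the introduction.
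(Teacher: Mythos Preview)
Your unweighted conformal multiplier is essentially the paper's estimate with Carleman parameter $a=0$, and this borderline choice is exactly where the argument breaks. Two gaps appear. First, under hypothesis \eqref{eq.uc_defocusing_mono} the bulk bracket is only $\geq 0$, so from the identity you conclude at most that $\phi$ vanishes on the open set where the bracket is strictly positive; your ``borderline regime'' is not just the conformal case $p=1+\tfrac{4}{n-1}$ with $D(\log V)\equiv 0$, but any region where \eqref{eq.uc_defocusing_mono} is saturated. Second, invoking ``classical unique continuation'' to propagate vanishing across the zero set of the bracket is circular: unique continuation for this nonlinear equation across an arbitrary hypersurface is precisely what the theorem is meant to establish, and there is no Holmgren-type result available here for non-analytic $V$.

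The paper's fix is exactly the Carleman perturbation you sketch at the end, but with a specific weight that dissolves the obstacle you flag. One takes $e^{-F}=f^{a}$ with $0<a<\delta/2$, i.e.\ $F=-a\log f$. Because the level sets of $f=-uv$ have \emph{zero} pseudoconvexity ($\nabla^2 f=\tfrac12 g$), this choice makes the zero-order corrections $G_F$, $H_F$ in Proposition \ref{thm.carleman_gen} vanish identically, so no indefinite gradient terms are introduced---the ``balancing'' you worry about simply does not arise. The only bulk contribution is then $-\mc{B}_U^{F_0}=\tfrac{1}{p+1}f^{2a}V(-\Gamma_V)|\phi|^{p+1}$ from the nonlinearity, and a direct computation gives $-\Gamma_V\geq a(p-1)>0$ under \eqref{eq.uc_defocusing_mono}, restoring strict positivity. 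Simultaneously the factor $f^{2a}\to 0$ at the null cone kills the inner boundary flux outright (Lemma \ref{thm.coarea_f_zero_low}), so no Hardy inequality is needed either. Thus the single move $a=0\mapsto a>0$ closes both gaps at once.
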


\begin{remark}
Note that for defocusing-type equations, \eqref{eq.wave_defocusing}, one does not require the extra decay condition \eqref{eq.wave_decay_focusing} needed for focusing-type equations.
\end{remark}

\begin{remark}
The estimates we employ are robust enough so that Theorems \ref{thm.uc_focusing} and \ref{thm.uc_defocusing} can be even further generalized to operators of the form
\begin{align}
\label{super-general-NLW} \Box \phi \pm V (t, x) \cdot \dot{W} (\phi) = 0 \text{.}
\end{align}
Roughly, we can find analogous unique continuation results in the following cases:
\begin{itemize}
\item Focusing-type, with $\dot{W} (\phi)$ growing at a subconformal rate.

\item Defocusing-type, with $\dot{W} (\phi)$ growing at a conformal or superconformal rate.
\end{itemize}
For simplicity, though, we restrict our attention to equations of the form \eqref{general-NLW}.
\end{remark}

For example, by taking $V (t, x) = 1$, we recover unique continuation results for the usual focusing and defocusing nonlinear wave equations:

\begin{corollary} \label{thm.uc_cor}
Suppose $\phi \in \mc{C}^2 (\mc{D})$ satisfies either
\begin{align}
\label{eq.wave_cor} \begin{cases} \Box \phi + | \phi |^{p - 1} \phi = 0 &\quad 1 \leq p < 1 + \frac{4}{n - 1} \text{,} \\ \Box \phi - | \phi |^{p - 1} \phi = 0 &\quad p \geq 1 + \frac{4}{n - 1} \text{,} \end{cases} 
\end{align}
and suppose \eqref{eq.wave_decay_delta} holds for some $\delta > 0$.
Then, $\phi \equiv 0$ on $\mc{D}$.
\end{corollary}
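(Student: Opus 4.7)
The plan is to obtain Corollary \ref{thm.uc_cor} as a direct specialization of Theorems \ref{thm.uc_focusing} and \ref{thm.uc_defocusing} to the case $V \equiv 1$. Almost all of the work consists of verifying that each hypothesis of those theorems either collapses trivially or is already implied by the given decay \eqref{eq.wave_decay_delta}.

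For the defocusing range $p \geq 1 + 4/(n-1)$, I apply Theorem \ref{thm.uc_defocusing}. Clearly $V = 1$ is smooth, bounded, and positive, and the scaling condition \eqref{eq.uc_defocusing_mono} reads $0 \leq \frac{n-1}{2}\left(p - 1 - \frac{4}{n-1}\right)$, which is exactly the superconformal assumption. Since \eqref{eq.wave_decay_delta} is hypothesized, Theorem \ref{thm.uc_defocusing} immediately gives $\phi \equiv 0$ on $\mc{D}$ with no further work.

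For the subconformal range $1 \leq p < 1 + 4/(n-1)$, I apply Theorem \ref{thm.uc_focusing} (which, at $p = 1$, is Theorem \ref{thm.uc_strong}). The monotonicity \eqref{eq.uc_focusing_mono} reduces to $0 > -\frac{n-1}{2}\left(1 + \frac{4}{n-1} - p\right) + \mu$, satisfied by any $\mu$ with $0 < \mu < \frac{n-1}{2}\left(1 + \frac{4}{n-1} - p\right)$. The remaining task is to verify the auxiliary hypothesis \eqref{eq.wave_decay_focusing}, which for $V = 1$ reads
\[ \sup_{\mc{D} \cap \{|uv| > 1\}} \left\{ [(1+|u|)(1+|v|)]^{(n-1+\beta)/(p+1)} |uv|^{1/(p+1)} |\phi| \right\} < \infty. \]
Combining $|uv| \leq (1 + |u|)(1 + |v|)$ on $\mc{D}$ with the pointwise bound on $|\phi|$ from \eqref{eq.wave_decay_delta} reduces this to controlling $[(1 + |u|)(1 + |v|)]^{(n+\beta)/(p+1) - (n - 1 + \delta)/2}$, whose exponent is non-positive provided $\beta > 0$ is chosen small relative to $\delta$ and to the gap $1 + 4/(n-1) - p$.

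The main delicate point I foresee is the endpoint $p = 1$, where the naive interpolation above forces $\beta \leq \delta - 1$ and no positive $\beta$ works. There I would revert to Theorem \ref{thm.uc_strong} directly and inspect the proof to confirm that, with the constant potential $V \equiv 1$, the stronger hypothesis \eqref{eq.wave_decay_strong} enters only through a term that can be absorbed into the dominant good terms of the Carleman estimate, essentially because $V$ has no spatial profile contributing additional error. In that case \eqref{eq.wave_decay_delta} alone would suffice, completing the proof; this is the step that requires genuine care, rather than the routine verification in the other subcases.
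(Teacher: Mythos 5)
Your specialization strategy (substituting $V \equiv 1$ into Theorems \ref{thm.uc_focusing} and \ref{thm.uc_defocusing}) is exactly what the paper intends, and the defocusing branch is indeed immediate: \eqref{eq.uc_defocusing_mono} with $V \equiv 1$ reads $0 \leq \tfrac{n-1}{2}(p - 1 - \tfrac{4}{n-1})$, and no analogue of \eqref{eq.wave_decay_focusing} is required there.

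The focusing branch, however, has a genuine gap that your computation correctly detects at $p = 1$ but that in fact persists over a larger range. Using $|uv| \leq (1+|u|)(1+|v|)$ together with the $|\phi|$ bound from \eqref{eq.wave_decay_delta}, the resulting exponent on $(1+|u|)(1+|v|)$ is non-positive iff $2(n+\beta) \leq (p+1)(n-1+\delta)$. Even at $\beta = 0$ this forces roughly $p \geq 1 + \tfrac{2}{n-1+\delta}$, and at $p = 1$ it forces $\delta > 1$; so for a generic $\delta > 0$ the crude reduction fails on all of $1 \leq p \lesssim 1 + \tfrac{2}{n-1}$, a nontrivial subinterval of the subconformal range, not merely at the endpoint. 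Your proposed fallback at $p = 1$ --- that with $V \equiv 1$ the extra hypothesis \eqref{eq.wave_decay_strong} can be dropped because $V$ "has no spatial profile contributing additional error" --- does not stand up to inspection of the proof: \eqref{eq.wave_decay_strong} (equivalently \eqref{eq.wave_decay_focusing_ex}) is what makes the outgoing flux $Z_1 = \int_{\mc{F}^{\sigma,\tau}_\omega} f^{2a+\frac12} V |\phi|^{p+1}$ vanish as $\omega \nearrow \infty$; this term has the unfavorable sign in the focusing case (whence it can be dropped only for defocusing), it is a boundary flux rather than a bulk term and so is not absorbable into the positive Carleman bulk, and the constancy of $V$ does nothing to control the extra $f^{1/2}$ factor. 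In short, you would still need \eqref{eq.wave_decay_focusing} (or \eqref{eq.wave_decay_strong} at $p=1$) for $p$ near $1$, and neither your argument nor the paper --- which presents the corollary as an immediate substitution $V=1$ without an explicit proof --- supplies the further restriction on $(p,\delta)$ or the additional argument (for instance, a bootstrap of extra decay from the equation $\Box\phi = -|\phi|^{p-1}\phi$) that would close this.
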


The main results---Theorems \ref{thm.uc_finite}, \ref{thm.uc_focusing}, and \ref{thm.uc_defocusing}---are proved in Section \ref{sec:uc}.

\subsection{The Main Estimates} \label{sec:intro_carleman}

The main tool for our uniqueness results is a new family of multiplier estimates which are global, in the sense that they apply to regular functions defined over the entire region $\mc{D}$.
The precise estimates---both linear and nonlinear---are presented in Theorems \ref{thm.carleman_lh} and \ref{thm.carleman_nl}.

In many respects, these resemble Carleman-type estimates commonly found in unique continuation results---these are weighted $L^2$-estimates for $\phi$, where the weight has an extra parameter $a \in \R$ that can be freely varied.
On the other hand, in contrast to standard applications, because of the global nature of our estimates, we need not take this parameter $a$ to be large or tending to $\infty$. By slight abuse of language, we refer to our estimates as ``Carleman-type'' from this point onwards.

To derive these estimates, we mostly follow the notations developed in 
\cite{io_kl:unique_ip} and adopted in \cite{alex_schl_shao:uc_inf}.
Similar local Carleman estimates, but from the null cone rather than from null infinity, were proved in \cite{io_kl:private}.
\bigskip

From this geometric point of view, the basic process behind proving Carleman-type estimates can be summarized as follows:
\begin{itemize}
\item One applies multipliers and integrates by parts like for energy estimates, but the goal is now to obtain positive bulk terms.

\item In order to achieve the above, we do not work directly with the solution $\phi$ itself.
Rather, we undergo a conjugation by considering the wave equation for $\psi = e^{-F} \phi$, where $e^{-F}$ is a specially chosen weight function.
\end{itemize}
For further discussion on the geometric view of Carleman estimates and their proofs, the reader is referred to \cite[Section 3.3]{alex_schl_shao:uc_inf}.
 
The function $F$ we work with here is a reparametrization $F(f)$ of the Minkowski square distance function from the origin,
\begin{align}
\label{eq.f} f \in \mc{C}^\infty (\mc{D}) \text{,} \qquad f := -u v = \frac{1}{4} (r^2 - t^2) \text{.}
\end{align}
Its level sets form a family of timelike hyperboloids having zero pseudoconvexity.
As a result of this, the bulk terms we obtain unfortunately yields no first derivative terms.
However, the specific nature of this $f$ crucially helps us, as \emph{it allows us to generate (zero-order) bulk terms that can be made positive on all of $\mc{D}$.}

This global positivity of the bulk is important here primarily because \emph{the weight $e^{-F}$ vanishes on the null cone $\mc{N}$ about the origin}, i.e., the left boundary of $\mc{D}$.
In practice, this allows us to eliminate flux terms at this ``inner'' boundary;
in particular, we do not introduce a cutoff function, which is commonly used in (local) unique continuation problems.
This is ultimately responsible for us only needing to assume finite-order vanishing at null infinity for our uniqueness results.
\footnote{More specifically, in our current context, the lack of a cutoff function removes the need to take $a \nearrow \infty$ in Theorems \ref{thm.carleman_lh} and \ref{thm.carleman_nl} when proving unique continuation.}

The first estimate, Theorem \ref{thm.carleman_lh}, applies to the linear 
wave operator $\Box$.
In this case, special care is required to generate the positive bulk.
In particular, we implicitly utilize that our domain $\mc{D}$ is symmetric up to an 
inversion across a hyperboloid to construct two complementary reparametrizations 
$F_\pm$ which match up at the hyperboloid.
While this conformal inversion (see Section \ref{sec:uc_boundary_inversion}) is not 
used explicitly in the proof, it is manifest in the idea that local Carleman estimates 
from infinity and from a null cone are dual to each other.

For the nonlinear equations that we deal with, our second estimate, Theorem \ref{thm.carleman_nl}, directly uses the nonlinearity to produce a positive bulk.
This is in contrast to the usual method of treating nonlinear terms by seeking to absorb them into the positive bulk arising from the linear terms.
In the context of unique continuation, this results in the improvement (for certain equations) from Theorem \ref{thm.uc_finite}, which requires suitably bounded potentials relative to the assumed order of vanishing, to Theorems \ref{thm.uc_strong}, \ref{thm.uc_focusing}, and \ref{thm.uc_defocusing}, for which no such bound is required.

Finally, we remark that Theorem \ref{thm.carleman_nl} will also be applied in the companion paper \cite{alex_shao:uc_nlw} to study nonlinear wave equations for different purposes.
Thus, we present the main estimates for more general domains than needed in the present paper.

\section{Global Estimates} \label{sec:carleman}

In this section, we derive new Carleman-type estimates for functions $\phi \in \mc{C}^2 (\mc{D})$, where $\mc{D}$ is the exterior of the double null cone (see \eqref{eq.outer_region}),
\begin{align*}
\mc{D} := \{ Q \in \R^{n+1} \mid u (Q) < 0 \text{, } v (Q) > 0 \} \text{.}
\end{align*}
Throughout, we will let $\nabla$ denote the Levi-Civita connection for $( \R^{1+n}, g )$, and we will let $\nabla^\sharp$ denote the gradient operator with respect to $g$.
We also recall the hyperbolic square distance function $f$ defined in \eqref{eq.f}.

\subsection{The Preliminary Estimate} \label{sec:carleman_prelim}

In order to state the upcoming inequalities succinctly, we make some preliminary definitions.

\begin{definition} \label{def.reparam}
We define a \emph{reparametrization} of $f$ to be a function of the form $F \circ f$, where $F \in \mc{C}^\infty (0, \infty)$.
For convenience, we will abbreviate $F \circ f$ by $F$.
We use the symbol $'$ to denote differentiation of a reparametrization as a function on $(0, \infty)$, that is, differentiation with respect to $f$.
\end{definition}

\begin{definition} \label{def.region_adm}
We say that an open, connected subset $\Omega \subseteq \mc{D}$ is \emph{admissible} iff:
\begin{itemize}
\item The closure of $\Omega$ is a compact subset of $\mc{D}$.

\item The boundary $\partial \Omega$ of $\Omega$ is piecewise smooth, with each smooth piece being either a spacelike or a timelike hypersurface of $\mc{D}$.
\end{itemize}
For an admissible $\Omega \subseteq \mc{D}$, we define the \emph{oriented unit normal} $\mc{N}$ of $\partial \Omega$ as follows:
\begin{itemize}
\item $\mc{N}$ is the inward-pointing unit normal on each spacelike piece of $\partial \Omega$.

\item $\mc{N}$ is the outward-pointing unit normal on each timelike piece of $\partial \Omega$.
\end{itemize}
Integrals over such an admissible region $\Omega$ and portions of its boundary $\partial \Omega$ will be with respect to the volume forms induced by $g$.
\end{definition}

\begin{definition} \label{def.monotone_reparam}
We say that a reparametrization $F$ of $f$ is \emph{inward-directed} on an admissible region $\Omega \subseteq \mc{D}$ iff $F' < 0$ everywhere on $\Omega$.
\end{definition}

Lastly, we provide the general form of the wave operators we will consider:
\footnote{These include all the wave operators arising from the main theorems throughout Section \ref{sec:intro_results}.}

\begin{definition} \label{def.wave_op}
Let $U \in \mc{C}^1 (\mc{D} \times \R)$, and let $\dot{U}$ be the partial derivative of $U$ in the last ($\R$-)component.
Define the following (possibly) nonlinear wave operator:
\begin{align}
\label{eq.wave_op} \Box_U \phi (Q) = \Box \phi (Q) + \dot{U} (Q, \phi (Q) ) \text{,} \qquad Q \in \mc{D} \text{.}
\end{align}
Furthermore, derivatives $\nabla U$ of $U$ will be with respect to the first ($\mc{D}$-)component.
\end{definition}

We can now state our preliminary Carleman-type inequality:

\begin{proposition} \label{thm.carleman_gen}
Let $\phi \in \mc{C}^2 (\mc{D})$, and let $\Omega \subseteq \mc{D}$ be an 
admissible region.
Furthermore, let $U$ and $\Box_U$ be as in Definition \ref{def.wave_op}.
Then, for any inward-directed reparametrization $F$ of $f$ on $\Omega$, the 
following inequality holds,
\begin{align}
\label{eq.carleman_gen} \frac{1}{8} \int_\Omega e^{-2 F} | F^\prime |^{-1} \cdot | \Box_U \phi |^2 &\geq \int_\Omega e^{-2 F} ( f | F^\prime | G_F - H_F ) \cdot \phi^2 \\
\notag &\qquad - \int_\Omega \mc{B}_U^F - \int_{ \partial \Omega } P^F_\beta \mc{N}^\beta \text{,}
\end{align}
where:
\begin{itemize}
\item $G_F$ and $H_F$ are defined in terms of $f$ as
\begin{align}
\label{eq.G} G_F := - ( f F' )' \text{,} \qquad H_F := \frac{1}{2} ( f G_F )' \text{,}
\end{align}

\item $\mc{B}_U^F$ is the nonlinear bulk quantity,
\begin{align}
\label{eq.bulk} \mc{B}_U^F &:= e^{-2 F} \left( \frac{n - 1}{4} - f F' \right) \cdot \dot{U} (\phi) \phi - e^{-2 F} \nabla^\alpha f \cdot \nabla_\alpha U (\phi) \\
\notag &\qquad - 2 e^{-2 F} \left( \frac{n + 1}{4} - f F' \right) \cdot U (\phi) \text{,}
\end{align}

\item $\mc{N}$ is the oriented unit normal of $\partial \Omega$.

\item $P^F$ is the current,
\begin{align}
\label{eq.current} P^F_\beta &:= e^{-2 F} \left( \nabla^\alpha f \cdot \nabla_\alpha \phi \nabla_\beta \phi - \frac{1}{2} \nabla_\beta f \cdot \nabla^\mu \phi \nabla_\mu \phi \right) \\
\notag &\qquad + e^{-2 F} \nabla_\beta f \cdot U (\phi) + e^{-2 F} \left( \frac{n - 1}{4} - f F' \right) \cdot \phi \nabla_\beta \phi \\
\notag &\qquad + e^{-2 F} \left[ \left( f F' - \frac{n - 1}{4} \right) F' - \frac{1}{2} G_F \right] \nabla_\beta f \cdot \phi^2 \text{.}
\end{align}
\end{itemize}
\end{proposition}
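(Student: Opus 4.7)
The plan is to derive \eqref{eq.carleman_gen} via a weighted multiplier identity tailored to the weight $e^{-2F}$ and the multiplier $\nabla^\sharp f$. Specifically, one multiplies $\Box_U \phi$ by $e^{-2F}$ times a multiplier of the form
\[
M\phi := 2\,\nabla^\alpha f\,\nabla_\alpha \phi + \omega(f)\,\phi,
\]
where the scalar correction $\omega(f)$ is a specific linear combination of $1$ and $fF'$, chosen so that on the one hand the boundary coefficient $\tfrac{n-1}{4} - fF'$ in \eqref{eq.current} is reproduced, and on the other hand all $|\nabla\phi|^2$ terms cancel from the final interior bulk. Rearranging the product $e^{-2F} M\phi \cdot \Box_U \phi$ as $\nabla^\beta P^F_\beta$ plus an interior bulk yields the claimed inequality after one final application of Young's inequality.

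Two algebraic identities for $f = -uv$ in Minkowski space drive the computation, both depending only on $f$: $\nabla^\alpha f\,\nabla_\alpha f = f$ and $\Box f = (n+1)/2$. Together with the clean Hessian $\nabla_\alpha\nabla_\beta f = -\nabla_\alpha u\,\nabla_\beta v - \nabla_\alpha v\,\nabla_\beta u$, these reduce every derivative of a reparametrization $F(f)$ to factors of $F', F''$ times powers of $f$, and ensure that the deformation tensor of $\nabla^\sharp f$ interacts very simply with the stress tensor $T_{\alpha\beta}(\phi) = \nabla_\alpha\phi\,\nabla_\beta\phi - \tfrac12 g_{\alpha\beta}\,\nabla^\mu\phi\,\nabla_\mu\phi$. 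I would then carry out the computation term by term. The vector-field piece $2\nabla^\alpha f\,\nabla_\alpha\phi$ contracted with $\Box\phi$, after integration by parts against $e^{-2F}$, produces the stress-tensor boundary terms of $P^F$ along with interior contributions from $T_{\alpha\beta}\,\nabla^\alpha\nabla^\beta f$ and from differentiating the weight. The scalar piece $\omega\phi$ contracted with $\Box\phi$ produces the boundary $\phi\nabla_\beta\phi$ entry of $P^F$ and, after one further integration by parts of the cross term $\omega'(f)\,\nabla^\alpha f \cdot \phi\nabla_\alpha\phi$, both the $\phi^2 \nabla_\beta f$ boundary term of $P^F$ and a zero-order interior contribution. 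Finally, the nonlinear piece $\dot U(\phi)$ of $\Box_U\phi$ multiplied by $M\phi$ yields, after the chain rule $\nabla^\alpha f\,\nabla_\alpha[U(\phi)] = \nabla^\alpha f\,\nabla_\alpha U + \dot U(\phi)\,\nabla^\alpha f\,\nabla_\alpha\phi$ and one integration by parts that uses $\Box f = (n+1)/2$, the nonlinear bulk $-\mc{B}_U^F$ together with the $U(\phi)\,\nabla_\beta f$ contribution to $P^F$.

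Collecting every contribution, the specific choice of $\omega$ forces all $|\nabla\phi|^2$ terms to cancel from the interior, and the residual zero-order bulk collapses onto the single coefficient $f|F'|G_F - H_F$ in front of $\phi^2$: the factor $G_F = -(fF')'$ appears as the first $f$-derivative of $\omega$ paired with a weight contribution, while $H_F = \tfrac12(fG_F)'$ emerges as the divergence produced when a residual $G_F\,\nabla^\alpha f \cdot \phi\nabla_\alpha\phi$ contribution is integrated by parts once more. The raw identity thus equates $\int_\Omega e^{-2F}\,M\phi\,\Box_U\phi$ to a positive derivative bulk of the form $\int_\Omega c\,|F'|\,e^{-2F}\,|\nabla^\alpha f\,\nabla_\alpha\phi|^2$ (with a specific constant $c > 0$), plus the zero-order bulk $\int_\Omega e^{-2F}(f|F'|G_F - H_F)\phi^2$, minus $\int_\Omega \mc{B}_U^F$, minus $\int_{\partial\Omega} P^F_\beta\,\mc{N}^\beta$. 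Applying the weighted Young inequality $2ab \leq \tfrac{1}{8|F'|}a^2 + 8|F'|\,b^2$ with $a = \Box_U\phi$ and $b = \nabla^\alpha f\,\nabla_\alpha\phi$ transfers the cross term onto the $\tfrac18\,e^{-2F}|F'|^{-1}|\Box_U\phi|^2$ left-hand side of \eqref{eq.carleman_gen}; the residual $8|F'|\,e^{-2F}|\nabla^\alpha f\,\nabla_\alpha\phi|^2$ is exactly what the positive derivative bulk was designed to absorb. The inward-directedness $F' < 0$ is used only to make sense of $|F'|^{-1} = -(F')^{-1}$ and to select the correct sign in Young's inequality.

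The main obstacle is not conceptual but algebraic bookkeeping: one must verify that the stress-tensor deformation bulk, the scalar-correction bulk, the weight-conjugation bulk, and the Cauchy--Schwarz residual really do collapse to the single $\phi^2$-coefficient $f|F'|G_F - H_F$, with no leftover first-derivative terms, and that the nonlinear contributions reorganize precisely into $\mc{B}_U^F$ with no residual $\dot U(\phi)\,\nabla\phi$ terms. The reappearance of $H_F$ as the second $f$-derivative of a quantity built from $F'$ is a nontrivial algebraic coincidence that hinges on $|\nabla f|^2 = f$ and $\Box f$ being constant; it is what ultimately allows a Carleman identity with the universal prefactor $\tfrac18|F'|^{-1}$ on the left-hand side. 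No positivity of $G_F$ or $H_F$ is asserted at this stage; these become design constraints only when $F$ is later specialized for the applications in subsequent sections.
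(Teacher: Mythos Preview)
Your approach is essentially the paper's: the paper phrases it as a conjugation, setting $\psi := e^{-F}\phi$ and multiplying the conjugated operator $\mc{L}_U\psi := e^{-F}\Box_U(e^F\psi)$ by the corrected radial derivative $S_*\psi := \nabla^\alpha f\,\nabla_\alpha\psi + \tfrac{n-1}{4}\psi$. Since $e^{F} S_*\psi = \nabla^\alpha f\,\nabla_\alpha\phi + (\tfrac{n-1}{4} - fF')\phi = \tfrac12 M\phi$ with your choice of $\omega$, the two computations are line-by-line equivalent, and the paper's pointwise identity $\mc{L}_U\psi\, S_*\psi = 2F'|S_*\psi|^2 + (fF'G_F+H_F)\psi^2 + \mc{B}_U^F + \nabla^\beta P^F_\beta$ is exactly your ``raw identity.''

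Two points to correct before you execute. First, the positive first-order bulk that the identity actually produces is $2|F'|\,|S_*\psi|^2 = \tfrac12|F'|\,e^{-2F}|M\phi|^2$, not $c\,|F'|\,e^{-2F}|\nabla^\alpha f\,\nabla_\alpha\phi|^2$; correspondingly the Young step must take $b = \tfrac12 M\phi$ (the \emph{full} multiplier, including the $\omega\phi$ piece), not $b = \nabla^\alpha f\,\nabla_\alpha\phi$. If you pair $\Box_U\phi$ only against $S\phi$ in Young, the contribution $e^{-2F}\,\omega\phi\cdot\Box_U\phi$ from the scalar part of $M\phi$ is left uncontrolled and the argument does not close with the stated $P^F$ and zero-order coefficient. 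Second, your Hessian formula $-\nabla_\alpha u\,\nabla_\beta v - \nabla_\alpha v\,\nabla_\beta u$ is missing the angular block: in fact $\nabla_\alpha\nabla_\beta f = \tfrac12 g_{\alpha\beta}$, whose spherical part $\tfrac12 r^2\mathring\gamma_{\alpha\beta}$ is what ultimately makes the deformation-tensor contribution $\nabla^{\alpha\beta}f\,\mc{Q}_{\alpha\beta}$ cancel against the $\Box f$ term and single out the constant $\tfrac{n-1}{4}$ in $S_*$.
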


The remainder of this subsection is dedicated to the proof of \eqref{eq.carleman_gen}.

\subsubsection{Preliminaries}

We first collect some elementary computations regarding $f$.

\begin{lemma} \label{thm.f_deriv}
The following identities hold:
\begin{align}
\label{eq.f_deriv} \partial_u f = -v \text{,} \qquad \partial_v f = -u \text{.}
\end{align}
As a result,
\begin{align}
\label{eq.f_grad} \grad f = \frac{1}{2} ( u \cdot \partial_u + v \cdot \partial_v ) \text{,} \qquad \nabla^\alpha f \nabla_\alpha f = f \text{.}
\end{align}
\end{lemma}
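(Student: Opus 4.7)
The plan is a direct coordinate computation in the null-spherical system $(u, v, \omega)$, since $f = -uv$ depends only on the null coordinates and the ambient metric \eqref{eq.met_phys} takes a particularly simple form there. First I would verify \eqref{eq.f_deriv} by differentiating the definition $f = -uv$ termwise, which immediately yields $\partial_u f = -v$ and $\partial_v f = -u$.

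Next, for the gradient formula I would read off the inverse metric in the $(u,v)$-block. From $g = -4\,du\,dv + r^2 \mathring{\gamma}$, the $(u,v)$-block of $g_{\alpha\beta}$ is the off-diagonal $2 \times 2$ matrix with both off-diagonal entries equal to $-2$; its inverse is off-diagonal with entries $-\tfrac{1}{2}$, so $g^{uv} = g^{vu} = -\tfrac{1}{2}$ and $g^{uu} = g^{vv} = 0$. Because $f$ has no angular dependence, the spherical contribution to $\grad f = g^{\alpha\beta}(\partial_\beta f)\,\partial_\alpha$ drops out, and substituting \eqref{eq.f_deriv} gives
\[
\grad f = -\tfrac{1}{2}(\partial_v f)\,\partial_u - \tfrac{1}{2}(\partial_u f)\,\partial_v = \tfrac{1}{2}(u\,\partial_u + v\,\partial_v),
\]
as required.

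Finally, for $\nabla^\alpha f\,\nabla_\alpha f = g(\grad f, \grad f)$ I would just pair the expression above with itself. The diagonal terms vanish because $g(\partial_u,\partial_u) = g(\partial_v,\partial_v) = 0$, so only the cross term survives, giving $2\cdot\tfrac{u}{2}\cdot\tfrac{v}{2}\cdot g(\partial_u,\partial_v) = \tfrac{uv}{2}\cdot(-2) = -uv = f$.

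There is no substantive obstacle here; the only point requiring a little care is tracking the factor of $-4$ (rather than $-2$) in the null form \eqref{eq.met_phys} when inverting the $(u,v)$-block, as this propagates to the factor $\tfrac{1}{2}$ in $\grad f$ and is precisely what produces the clean identity $\nabla^\alpha f\,\nabla_\alpha f = f$ in the end.
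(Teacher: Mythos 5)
Your computation is correct and is exactly the direct coordinate verification the paper has in mind (the lemma is stated without proof, being an elementary consequence of $f=-uv$ and the null form of the Minkowski metric). All signs and the crucial factor of $-\tfrac{1}{2}$ in $g^{uv}$ are tracked accurately, and the final pairing $g(\grad f, \grad f) = -uv = f$ checks out.
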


\begin{lemma} \label{thm.f_hessian}
The following identity holds:
\begin{align}
\label{eq.f_hessian} \nabla^2 f = \frac{1}{2} g \text{.}
\end{align}
Moreover,
\begin{align}
\label{eq.f_hessian_ex} \Box f = \frac{n+1}{2} \text{,} \qquad \nabla^\alpha f \nabla^\beta f \nabla_{\alpha\beta} f = \frac{1}{2} f \text{.}
\end{align}
\end{lemma}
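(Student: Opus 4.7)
The plan is to prove the Hessian identity by a direct computation in Cartesian coordinates $(t,x^1,\dots,x^n)$, where the Christoffel symbols of $g$ vanish and the Hessian reduces to the matrix of partial derivatives. Since $f = \tfrac{1}{4}(r^2 - t^2) = \tfrac{1}{4}(-t^2 + (x^1)^2 + \cdots + (x^n)^2)$, the partial derivatives are $\partial_t f = -\tfrac{t}{2}$ and $\partial_i f = \tfrac{x^i}{2}$, and the second partials give $\partial_t^2 f = -\tfrac{1}{2}$, $\partial_i\partial_j f = \tfrac{1}{2}\delta_{ij}$, $\partial_t \partial_i f = 0$. Comparing with $g = -dt^2 + \sum_i (dx^i)^2$ in these coordinates yields $\nabla^2 f = \tfrac{1}{2} g$ componentwise.

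Next I would deduce the two consequences in \eqref{eq.f_hessian_ex} directly from $\nabla^2 f = \tfrac{1}{2} g$. Taking the $g$-trace gives
\[
\Box f \;=\; g^{\alpha\beta} \nabla_{\alpha\beta} f \;=\; \tfrac{1}{2} g^{\alpha\beta} g_{\alpha\beta} \;=\; \tfrac{n+1}{2},
\]
while contracting twice with $\nabla^\sharp f$ and using $\nabla^\alpha f \nabla_\alpha f = f$ from Lemma \ref{thm.f_deriv} gives
\[
\nabla^\alpha f \nabla^\beta f \nabla_{\alpha\beta} f \;=\; \tfrac{1}{2} g_{\alpha\beta} \nabla^\alpha f \nabla^\beta f \;=\; \tfrac{1}{2} f.
\]

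There is no real obstacle here: the whole lemma is essentially the statement that $f$ is a quadratic function whose symmetric bilinear form is $\tfrac{1}{2} g$. An alternative, coordinate-free proof would start from $\nabla^\sharp f = \tfrac{1}{2}(u\partial_u + v\partial_v)$ (Lemma \ref{thm.f_deriv}), recognize this as one-half the dilation field on Minkowski space, and invoke the fact that the dilation is a conformal Killing field with $\mc{L}_D g = 2g$; but the Cartesian computation is so short that expanding it in full seems preferable for exposition.
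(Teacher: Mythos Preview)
Your proof is correct. The paper actually omits any proof of this lemma, treating it as a direct computation; your Cartesian-coordinate argument supplies exactly the details the paper leaves implicit, and the derivation of \eqref{eq.f_hessian_ex} from \eqref{eq.f_hessian} together with \eqref{eq.f_grad} is precisely what the authors have in mind.
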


\begin{remark}
In particular, equation \eqref{eq.f_hessian} implies that the level sets of $f$ have exactly zero pseudoconvexity.
\end{remark}

We also recall the (Lorentzian) divergence theorem in terms of our current language: \emph{if $\Omega \subseteq \mc{D}$ be admissible, and if $P$ is a smooth $1$-form on $\mc{D}$, then}
\begin{align}
\label{eq.divg_thm} \int_\Omega \nabla^\beta P_\beta &= \int_{ \partial \Omega } P_\beta \mc{N}^\beta \text{,}
\end{align}
\emph{where $\mc{N}$ is the oriented unit normal of $\partial \Omega$.}

\subsubsection{Proof of Proposition \ref{thm.carleman_gen}}

We begin by defining the following shorthands:
\begin{itemize}
\item Define $\psi \in \mc{C}^\infty (\mc{D})$ and the conjugated operator $\mc{L}_U$ by
\begin{align}
\label{eq.psi_phi} \psi := e^{-F} \phi \text{,} \qquad \mc{L}_U \psi := e^{-F} \Box_U \phi = e^{-F} \Box_U ( e^F \psi ) \text{.}
\end{align}

\item Let $S$ and $S_\ast$ define the operators
\begin{align}
\label{eq.S} S \psi := \nabla^\alpha f \nabla_\alpha \psi \text{,} \qquad S_\ast \psi := S \psi + \frac{n-1}{4} \cdot \psi \text{.}
\end{align}

\item Recall the stress-energy tensor for the wave equation, applied to $\psi$:
\begin{align}
\label{eq.stress_energy} \mc{Q}_{\alpha\beta} [\psi] := \mc{Q}_{\alpha\beta} := \nabla_\alpha \psi \nabla_\beta \psi - \frac{1}{2} g_{\alpha\beta} \nabla^\mu \psi \nabla_\mu \psi \text{.}
\end{align}
\end{itemize}
The proof will revolve around an energy estimate for the wave equation, but for $\psi$ rather than $\phi$.
We also make note of the following relations between $\psi$ and $\phi$:

\begin{lemma} \label{thm.algebraic_pre}
The following identities hold:
\begin{align}
\label{eq.deriv_psi_phi} \nabla_\alpha \psi = e^{-F} ( \nabla_\alpha \phi - F' \nabla_\alpha f \cdot \phi ) \text{,} \qquad S \psi = e^{-F} ( S \phi - f F' \cdot \phi ) \text{.}
\end{align}
Furthermore, we have the expansion
\begin{align}
\label{eq.L_psi_phi} \mc{L}_U \psi &= \Box \psi + 2 F^\prime \cdot S_\ast \psi + [ f ( F^\prime )^2 - G_F ] \cdot \psi + e^{-F} \dot{U} (\phi) \text{.}
\end{align}
\end{lemma}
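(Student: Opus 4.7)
The plan is to derive all three identities directly from the substitution $\psi = e^{-F}\phi$ and the derivative identities for $f$ already recorded in Lemmas \ref{thm.f_deriv}--\ref{thm.f_hessian}. The computations are routine but require careful bookkeeping of how $G_F = -(fF')' = -F' - fF''$ arises from the derivatives of the weight.

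First I would establish the gradient formula. Since $F$ is a reparametrization, the chain rule gives $\nabla_\alpha F = F' \nabla_\alpha f$, so differentiating $\psi = e^{-F}\phi$ yields
\[ \nabla_\alpha \psi = -e^{-F} F' \nabla_\alpha f \cdot \phi + e^{-F} \nabla_\alpha \phi = e^{-F}(\nabla_\alpha \phi - F' \nabla_\alpha f \cdot \phi)\text{,} \]
which is the first claimed identity. Contracting with $\nabla^\alpha f$ and using the eikonal identity $\nabla^\alpha f \nabla_\alpha f = f$ from \eqref{eq.f_grad} immediately gives the second identity $S\psi = e^{-F}(S\phi - fF'\phi)$.

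The heart of the computation is the expression for $\mc{L}_U\psi$. Writing $\phi = e^F\psi$, I would apply $\nabla^\alpha$ to $\nabla_\alpha\phi = e^F(\nabla_\alpha\psi + F'\nabla_\alpha f \cdot \psi)$, obtaining
\[ e^{-F}\Box\phi = \Box\psi + 2F' \cdot S\psi + \bigl[f(F')^2 + f F'' + \tfrac{n+1}{2}F'\bigr]\psi\text{,} \]
where the second derivative terms come from differentiating both the $e^F$ factor (contributing $F' S\psi$ twice) and the product $F'\nabla_\alpha f$, with $\nabla^\alpha(F' \nabla_\alpha f) = F'' \nabla^\alpha f\nabla_\alpha f + F'\Box f = fF'' + \tfrac{n+1}{2}F'$ courtesy of \eqref{eq.f_grad} and \eqref{eq.f_hessian_ex}. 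At this point the expression in brackets is $f(F')^2 + (fF')' + \tfrac{n-1}{2}F' \cdot \tfrac{(\cdots)}{(\cdots)}$, so I would use $fF'' + F' = (fF')' = -G_F$ to rewrite the bracket as $f(F')^2 - G_F + \tfrac{n-1}{2}F'$. Absorbing the leftover $\tfrac{n-1}{2}F'\psi$ into the $2F' S\psi$ term via $2F' S\psi + \tfrac{n-1}{2}F'\psi = 2F'(S\psi + \tfrac{n-1}{4}\psi) = 2F' S_\ast\psi$, and adding the nonlinear contribution $e^{-F}\dot U(\phi)$ from $\mc{L}_U\psi = e^{-F}\Box_U\phi$, yields exactly \eqref{eq.L_psi_phi}.

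I do not anticipate any serious obstacle: the only subtlety is recognizing that the particular combination $fF'' + F'$ collapses to $-G_F$ and that the constant $\tfrac{n+1}{2}F'$ splits as $2\cdot\tfrac{n-1}{4}F' + F'$ in order to combine with $S\psi$ into $S_\ast\psi$ cleanly. All other manipulations are direct chain rule and the two identities $\nabla^\alpha f \nabla_\alpha f = f$ and $\Box f = \tfrac{n+1}{2}$ from Lemmas \ref{thm.f_deriv}--\ref{thm.f_hessian}.
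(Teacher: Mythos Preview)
Your proposal is correct and follows essentially the same route as the paper: both derive \eqref{eq.deriv_psi_phi} by the chain rule and \eqref{eq.f_grad}, then expand $e^{-F}\Box(e^F\psi)$ using $\nabla^\alpha f\nabla_\alpha f = f$ and $\Box f = \tfrac{n+1}{2}$, identify $fF'' + F' = -G_F$, and regroup $2F'S\psi + \tfrac{n-1}{2}F'\psi = 2F'S_\ast\psi$. Aside from a stray placeholder ``$\tfrac{(\cdots)}{(\cdots)}$'' in your write-up, there is nothing to add.
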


\begin{proof}
First, \eqref{eq.deriv_psi_phi} is immediate from definition and from \eqref{eq.f_grad}.
We next compute
\begin{align}
\label{eql.L_psi_phi_0} \mc{L}_U \psi &= e^{-F} \nabla^\alpha ( F^\prime e^F \nabla_\alpha f \cdot \psi ) + e^{-F} \nabla^\alpha ( e^F \nabla_\alpha \psi ) + e^{-F} \dot{U} (\phi) \\
\notag &= \Box \psi + 2 F^\prime \cdot S \psi + f ( F^\prime )^2 \cdot \psi + f F^{\prime\prime} \cdot \psi + F^\prime \Box f \cdot \psi + e^{-F} \dot{U} (\phi) \text{,}
\end{align}
where we again applied \eqref{eq.f_grad}.
Since \eqref{eq.G} and \eqref{eq.f_hessian_ex} imply
\begin{align}
\label{eql.L_psi_phi_1} f ( F^\prime )^2 + f F^{\prime\prime} + F^\prime \Box f &= f ( F^\prime )^2 + ( f F^{\prime\prime} + F^\prime ) + \frac{n - 1}{2} F^\prime \\
\notag &= f ( F^\prime )^2 - G_F + \frac{n - 1}{2} F^\prime \text{,}
\end{align}
then \eqref{eq.L_psi_phi} follows from applying \eqref{eql.L_psi_phi_1} to \eqref{eql.L_psi_phi_0}.
\end{proof}

The first step in proving Proposition \ref{thm.carleman_gen} is to expand $\mc{L}_U \psi S_\ast \psi$:

\begin{lemma} \label{thm.algebraic}
The following identity holds,
\begin{align}
\label{eq.algebraic} \mc{L}_U \psi S_\ast \psi &= 2 F^\prime \cdot | S_\ast \psi |^2 + ( f F^\prime G_F + H_F ) \cdot \psi^2 + \mc{B}^F_U + \nabla^\beta P^F_\beta \text{,}
\end{align}
\end{lemma}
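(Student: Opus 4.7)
The plan is to substitute the expansion of $\mc{L}_U \psi$ from Lemma \ref{thm.algebraic_pre} into $\mc{L}_U\psi \cdot S_\ast\psi$. The cross piece $(2 F^\prime S_\ast\psi) \cdot S_\ast\psi$ gives $2 F^\prime |S_\ast\psi|^2$ at once, so it remains to convert the three summands
\[
\Box \psi \cdot S_\ast \psi, \qquad [ f(F^\prime)^2 - G_F ]\, \psi \cdot S_\ast \psi, \qquad e^{-F} \dot{U}(\phi) \cdot S_\ast \psi
\]
into a sum of bulk and divergence terms matching the right-hand side of \eqref{eq.algebraic}. Once this is done at the $\psi$-level, the conjugation $\psi = e^{-F}\phi$ will be used to reexpress the resulting divergence in the $\phi$-form prescribed by $P^F_\beta$.

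For the first summand, I split $\Box\psi \cdot S_\ast \psi = \Box\psi \cdot S\psi + \tfrac{n-1}{4} \Box\psi \cdot \psi$ and invoke the standard stress-energy identity $\nabla^\beta ( \mc{Q}_{\alpha\beta}[\psi] \nabla^\alpha f ) = \Box\psi \cdot S\psi + \mc{Q}_{\alpha\beta}[\psi] \nabla^{\alpha\beta} f$, using the Hessian identity $\nabla^2 f = \tfrac{1}{2} g$ from Lemma \ref{thm.f_hessian} to compute $\mc{Q}_{\alpha\beta}\nabla^{\alpha\beta} f = -\tfrac{n-1}{4} |\nabla\psi|^2$. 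This $|\nabla\psi|^2$ cancels exactly against the corresponding contribution from $\tfrac{n-1}{4}\Box\psi \cdot \psi = \tfrac{n-1}{4}\nabla^\beta(\psi\nabla_\beta\psi) - \tfrac{n-1}{4}|\nabla\psi|^2$, leaving the pure divergence $\nabla^\beta ( \mc{Q}_{\alpha\beta}[\psi]\nabla^\alpha f + \tfrac{n-1}{4}\psi\nabla_\beta\psi)$. This cancellation reflects the zero pseudoconvexity of the level sets of $f$ and is precisely why the shift $\tfrac{n-1}{4}$ is built into $S_\ast$.

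For the zeroth-order linear term, I set $a := f(F^\prime)^2 - G_F$, note that $a$ is a reparametrization so $Sa = f a^\prime$, and integrate by parts via $\nabla^\beta(a \psi^2 \nabla_\beta f) = (Sa + a\Box f)\psi^2 + a \, S(\psi^2)$, using $\psi S\psi = \tfrac{1}{2} S(\psi^2)$ and $\Box f = \tfrac{n+1}{2}$. The residual $\psi^2$-coefficient collapses to $-\tfrac{1}{2}(fa)^\prime$, and a short algebraic manipulation using $G_F = -(fF^\prime)^\prime$ and $2H_F = (fG_F)^\prime$ yields $-\tfrac{1}{2}(fa)^\prime = fF^\prime G_F + H_F$, as needed. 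For the nonlinear source I use Lemma \ref{thm.algebraic_pre} to write $S_\ast\psi = e^{-F}[ S\phi + (\tfrac{n-1}{4} - fF^\prime)\phi ]$ and the chain rule $\dot{U}(\phi)\nabla_\alpha\phi = \nabla_\alpha[U(\phi)] - \nabla_\alpha U(\phi)$, so that the remaining piece $e^{-2F} S[U(\phi)]$ integrates by parts, producing the three terms of $\mc{B}^F_U$ together with the divergence $\nabla^\beta ( e^{-2F} \nabla_\beta f \cdot U(\phi) )$.

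The main bookkeeping obstacle is the conversion of the resulting $\psi$-divergence back to the $\phi$-current $P^F_\beta$. Expanding $\mc{Q}_{\alpha\beta}[\psi]\nabla^\alpha f + \tfrac{n-1}{4}\psi\nabla_\beta\psi$ via $\nabla_\alpha\psi = e^{-F}(\nabla_\alpha\phi - F^\prime \phi\nabla_\alpha f)$ reproduces the first two brackets of $P^F_\beta$ together with a residual $\phi^2 \nabla_\beta f$ contribution of coefficient $\tfrac{1}{2} f (F^\prime)^2 - \tfrac{n-1}{4} F^\prime$; this must be combined with the $\tfrac{1}{2} a\, \phi^2 \nabla_\beta f = \tfrac{1}{2}[f(F^\prime)^2 - G_F]\phi^2\nabla_\beta f$ contribution from the zeroth-order integration by parts to produce the final bracket $(fF^\prime - \tfrac{n-1}{4})F^\prime - \tfrac{1}{2} G_F$ of $P^F_\beta$. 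Including the $e^{-2F}\nabla_\beta f \cdot U(\phi)$ piece from the nonlinear step then completes the match, establishing \eqref{eq.algebraic}.
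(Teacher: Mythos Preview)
Your proposal is correct and follows essentially the same route as the paper's proof: you expand $\mc{L}_U\psi$ via Lemma~\ref{thm.algebraic_pre}, handle $\Box\psi\cdot S_\ast\psi$ through the stress-energy divergence and the Hessian identity $\nabla^2 f=\tfrac12 g$, treat the zeroth-order term $\mc{A}\psi S_\ast\psi$ (your $a$ is the paper's $\mc{A}$) by the same integration by parts yielding $-\tfrac12(f\mc{A})'=fF'G_F+H_F$, process the $\dot U$-term via the chain rule and a further integration by parts to produce $\mc{B}^F_U$, and finally unwind $\psi=e^{-F}\phi$ to match $P^F_\beta$. The only cosmetic difference is notation; the computations and their order are the same.
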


\begin{proof}
From the stress-energy tensor \eqref{eq.stress_energy}, we can compute,
\begin{align}
\label{eql.algebraic_00} \nabla^\beta ( \mc{Q}_{\alpha\beta} \nabla^\alpha f ) &= \nabla^\beta \mc{Q}_{\alpha\beta} \nabla^\alpha f + \mc{Q}_{\alpha\beta} \nabla^{\alpha\beta} f \\
\notag &= \Box \psi S \psi + \nabla^2_{\alpha\beta} f \cdot \nabla^\alpha \psi \nabla^\beta \psi - \frac{1}{2} \Box f \cdot \nabla^\beta \psi \nabla_\beta \psi \text{,} \\
\notag \nabla^\beta ( \psi \nabla_\beta \psi ) &= \psi \Box \psi + \nabla^\beta \psi \nabla_\beta \psi \text{.}
\end{align}
Summing the equations in \eqref{eql.algebraic_00} and recalling \eqref{eq.f_hessian} and \eqref{eq.f_hessian_ex}, we obtain
\begin{align}
\label{eql.algebraic_01} \nabla^\beta \left( \mc{Q}_{\alpha\beta} \nabla^\alpha f + \frac{n-1}{4} \cdot \psi \nabla_\beta \psi \right) &= \Box \psi S_\ast \psi \text{.}
\end{align}
Multiplying \eqref{eq.L_psi_phi} by $S_\ast \psi$ and applying \eqref{eql.algebraic_01} results in the identity
\begin{align}
\label{eql.algebraic_0} \mc{L}_U \psi S_\ast \psi &= 2 F^\prime \cdot | S_\ast \psi |^2 + [ f (F^\prime)^2 - G_F ] \cdot \psi S_\ast \psi + e^{-F} \dot{U} (\phi) S_\ast \psi \\
\notag &\qquad + \nabla^\beta \left( \mc{Q}_{\alpha\beta} \nabla^\alpha f + \frac{n-1}{4} \cdot \psi \nabla_\beta \psi \right) \text{.}
\end{align}

Next, letting $\mc{A} = f (F^\prime)^2 - G_F$, the product rule and \eqref{eq.f_hessian_ex} imply
\begin{align}
\label{eql.algebraic_1} \mc{A} \cdot \psi S_\ast \psi &= \frac{1}{2} \mc{A} \cdot \nabla^\beta f \nabla_\beta ( \psi^2 ) + \frac{n-1}{4} \mc{A} \cdot \psi^2 \\
\notag &= \frac{1}{2} \nabla^\beta ( \mc{A} \nabla_\beta f \cdot \psi^2 ) - \frac{1}{2} \nabla^\beta f \nabla_\beta \mc{A} \cdot \psi^2 - \frac{1}{2} \mc{A} \cdot \psi^2 \\
\notag &= \frac{1}{2} \nabla^\beta ( \mc{A} \nabla_\beta f \cdot \psi^2 ) - \frac{1}{2} (f \mc{A})^\prime \cdot \psi^2 \\
\notag &= \frac{1}{2} \nabla^\beta ( \mc{A} \nabla_\beta f \cdot \psi^2 ) + ( f F^\prime G_F + H_F ) \cdot \psi^2 \text{.}
\end{align}
Moreover, recalling \eqref{eq.deriv_psi_phi}, we can write
\begin{align}
\label{eql.algebraic_20} e^{-F} \dot{U} (\phi) S_\ast \psi &= e^{-2 F} \dot{U} (\phi) S \phi + e^{-2 F} \left( \frac{n - 1}{4} - f F' \right) \dot{U} (\phi) \phi \text{.}
\end{align}
From the product and chain rules, \eqref{eq.f_grad}, and \eqref{eq.f_hessian_ex}, we see that
\begin{align}
\label{eql.algebraic_21} e^{-2 F} \cdot \dot{U} (\phi) S \phi &= \nabla^\beta [ e^{-2 F} \nabla_\beta f \cdot U (\phi) ] - e^{-2 F} \cdot S U (\phi) \\
\notag &\qquad - \nabla^\beta ( e^{-2 F} \nabla_\beta f ) \cdot U (\phi) \\
\notag &= \nabla^\beta [ e^{-2 F} \nabla_\beta f \cdot U (\phi) ] - e^{-2 F} \cdot S U (\phi) \\
\notag &\qquad - 2 e^{-2 F} \left( \frac{n + 1}{4} - f F' \right) \cdot U (\phi) \text{,}
\end{align}
where $S U := \nabla^\alpha f \nabla_\alpha U$ is a derivative of $U$ only in the first ($\mc{D}$-)component.
Therefore, from \eqref{eql.algebraic_20} and \eqref{eql.algebraic_21}, it follows that
\begin{align}
\label{eql.algebraic_2} e^{-F} \dot{U} (\phi) S_\ast \psi &= \nabla^\beta [ e^{-2 F} \nabla_\beta f \cdot U (\phi) ] + \mc{B}^F_U \text{.}
\end{align}

Combining \eqref{eql.algebraic_0} with \eqref{eql.algebraic_1} and \eqref{eql.algebraic_2} yields
\begin{align}
\label{eql.algebraic_3} \mc{L}_U \psi S_\ast \psi &= 2 F^\prime \cdot | S_\ast \psi |^2 + ( f F^\prime G_F + H_F ) \cdot \psi^2 + \mc{B}_U^F \\
\notag &\qquad + \nabla^\beta \left[ e^{-2 F} \nabla_\beta f \cdot U (\phi) + \frac{1}{2} \mc{A} \nabla_\beta f \cdot \psi^2 \right] \\
\notag &\qquad + \nabla^\beta \left( \mc{Q}_{\alpha\beta} \nabla^\alpha f + \frac{n-1}{4} \cdot \psi \nabla_\beta \psi \right)
\end{align}
Thus, to prove \eqref{eq.algebraic}, it remains only to show that
\begin{align}
\label{eql.algebraic_4} P^F_\beta = \mc{Q}_{\alpha\beta} \nabla^\alpha f + \frac{n-1}{4} \cdot \psi \nabla_\beta \psi + \frac{1}{2} \mc{A} \nabla_\beta f \cdot \psi^2 + e^{-2 F} \nabla_\beta f \cdot U (\phi) \text{.}
\end{align}

Note we obtain from \eqref{eq.f_grad}, \eqref{eq.stress_energy}, and \eqref{eq.deriv_psi_phi} that
\begin{align}
\label{eql.algebraic_41} \mc{Q}_{\alpha\beta} \nabla^\alpha f &= e^{-2 F} \nabla^\alpha f ( \nabla_\alpha \phi - F' \nabla_\alpha f \cdot \phi ) ( \nabla_\beta \phi - F' \nabla_\beta f \cdot \phi ) \\
\notag &\qquad - \frac{1}{2} e^{-2 F} \nabla_\beta f ( \nabla^\mu \phi - F' \nabla^\mu f \cdot \phi ) ( \nabla_\mu \phi - F' \nabla_\mu f \cdot \phi ) \\
\notag &= e^{-2 F} \left( S \phi \nabla_\beta \phi - \frac{1}{2} \nabla_\beta f \cdot \nabla^\mu \phi \nabla_\mu \phi \right) - e^{-2 F} f F' \cdot \phi \nabla_\beta \phi \\
\notag &\qquad + \frac{1}{2} e^{-2 F} f \nabla_\beta f ( F' )^2 \cdot \phi^2 \text{.}
\end{align}
Using \eqref{eq.deriv_psi_phi}, we also see that
\begin{align}
\label{eql.algebraic_42} \frac{n - 1}{4} \cdot \psi \nabla_\beta \psi &= \frac{n - 1}{4} e^{-2 F} ( \phi \nabla_\beta \phi - F' \nabla_\beta f \cdot \phi^2 ) \text{.}
\end{align}
Since the definition of $\mc{A}$ yields
\begin{align}
\label{eql.algebraic_43} \frac{1}{2} \mc{A} \nabla_\beta f \cdot \psi^2 = \frac{1}{2} e^{-2 F} [ f (F')^2 - G_F ] \nabla_\beta f \cdot \phi^2 \text{.}
\end{align}
then combining \eqref{eql.algebraic_41}-\eqref{eql.algebraic_43} yields \eqref{eql.algebraic_4} and completes the proof.
\end{proof}

\begin{lemma} \label{thm.ptwise}
The following pointwise inequality holds,
\begin{align}
\label{eq.ptwise} \frac{1}{8} | F' |^{-1} | \mc{L}_U \psi |^2 &\geq ( f | F' | G_F - H_F ) \cdot \psi^2 - \mc{B}_U^F - \nabla^\beta P^F_\beta \text{.}
\end{align}
\end{lemma}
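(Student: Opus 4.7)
The plan is to derive the pointwise inequality directly from the algebraic identity established in Lemma \ref{thm.algebraic} by absorbing the cross term $\mc{L}_U \psi \, S_\ast \psi$ via a weighted Cauchy--Schwarz/Young inequality with the positive quadratic term $2 F' \cdot |S_\ast \psi|^2$.

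First I would rewrite Lemma \ref{thm.algebraic} using the inward-directedness assumption $F' < 0$, i.e., $F' = -|F'|$. This gives
\begin{equation*}
\mc{L}_U \psi \, S_\ast \psi = -2 |F'| \, |S_\ast \psi|^2 - (f |F'| G_F - H_F) \, \psi^2 + \mc{B}^F_U + \nabla^\beta P^F_\beta,
\end{equation*}
which rearranges into
\begin{equation*}
(f |F'| G_F - H_F) \, \psi^2 - \mc{B}^F_U - \nabla^\beta P^F_\beta = -\mc{L}_U \psi \, S_\ast \psi - 2 |F'| \, |S_\ast \psi|^2.
\end{equation*}
The target inequality \eqref{eq.ptwise} is thus equivalent to showing that the right-hand side is bounded above by $\tfrac{1}{8} |F'|^{-1} |\mc{L}_U \psi|^2$.

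To achieve this, I would apply Young's inequality in the form $|a b| \leq \epsilon a^2 + (4\epsilon)^{-1} b^2$ with $a = S_\ast \psi$, $b = \mc{L}_U \psi$, and $\epsilon = 2|F'|$, which is legitimate since $F' \neq 0$ by the inward-directedness hypothesis. This yields
\begin{equation*}
-\mc{L}_U \psi \, S_\ast \psi \leq |\mc{L}_U \psi| \, |S_\ast \psi| \leq 2 |F'| \, |S_\ast \psi|^2 + \frac{1}{8 |F'|} \, |\mc{L}_U \psi|^2.
\end{equation*}
Substituting into the preceding display, the $2|F'| |S_\ast \psi|^2$ terms cancel, leaving exactly \eqref{eq.ptwise}.

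The argument is short and computational; there is no real obstacle, since Lemma \ref{thm.algebraic} has already done the nontrivial work of performing the integration-by-parts bookkeeping that places the quadratic form $2 F' |S_\ast \psi|^2$ in a position to be absorbed. The only subtlety is choosing the weight in Young's inequality to match the precise coefficient $\tfrac{1}{8}$ appearing in the statement; this matching constant is what motivates the choice $\epsilon = 2|F'|$. Integrating \eqref{eq.ptwise} over $\Omega$ and applying the divergence theorem \eqref{eq.divg_thm} to the $\nabla^\beta P^F_\beta$ term then recovers Proposition \ref{thm.carleman_gen}.
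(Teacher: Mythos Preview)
Your proposal is correct and follows essentially the same approach as the paper: both start from the algebraic identity of Lemma \ref{thm.algebraic}, use $F' = -|F'|$ to rewrite it, and then apply the weighted Young inequality $-\mc{L}_U \psi\, S_\ast \psi \leq 2|F'|\,|S_\ast\psi|^2 + \tfrac{1}{8}|F'|^{-1}|\mc{L}_U\psi|^2$ so that the $2|F'|\,|S_\ast\psi|^2$ terms cancel. The paper's proof is just a terser version of what you wrote.
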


\begin{proof}
From \eqref{eq.algebraic}, we have
\begin{align}
\label{eql.ptwise_1} - \mc{L}_U \psi S_\ast \psi &= 2 | F^\prime | | S_\ast \psi |^2 + ( f | F' | G_F - H_F ) \psi^2 - \mc{B}_U^F - \nabla^\beta P^F_\beta \text{,}
\end{align}
The inequality \eqref{eq.ptwise} follows immediately from \eqref{eql.ptwise_1} and the basic inequality
\begin{align*}
- \mc{L}_U \psi S_\ast \psi &\leq \frac{1}{8} | F' |^{-1} | \mc{L} \psi |^2 + 2 | F' | | S_\ast \psi |^2 \text{.} \qedhere
\end{align*}
\end{proof}

To complete the proof of Proposition \ref{thm.carleman_gen}, we integrate \eqref{eq.ptwise} over $\Omega$ and apply the divergence theorem, \eqref{eq.divg_thm}, to the last term on the right-hand side of \eqref{eq.ptwise}.

\subsection{The Linear Estimate} \label{sec:carleman_linear}

We now derive Carleman-type estimates for the wave operator $\Box$ (with no potential).
In terms of the terminology presented in Proposition \ref{thm.carleman_gen}, we wish to consider the situation in which $U \equiv 0$, so that we have no positive bulk contribution from $U$, i.e., $\mc{B}_U^F \equiv 0$.

Ideally, the reparametrization of $f$ we would like to take is $F = - a \log f$ (corresponding to power law decay for the wave at infinity), where $a > 0$.
However, for this $F$, we see that the quantities $G_F$ and $H_F$, defined in \eqref{eq.G}, vanish identically, so that \eqref{eq.carleman_gen} produces no positive bulk terms at all.
Thus, we must add correction terms to the above $F$ in order to generate the desired positive bulk.

Furthermore, to ensure that these corrections remain everywhere lower order, we must construct separate reparametrizations for regions with $f$ small ($f < 1$) and with $f$ large ($f > 1$).
We must also ensure that these two reparametrizations match at the boundary $f = 1$.
These considerations motivate the definitions below:

\begin{definition} \label{def.abp_lh_pre}
Fixing constants $a, b, p \in \R$ satisfying
\begin{align}
\label{eq.abp_lh} a > 0 \text{,} \qquad 0 < p < 2 a \text{,} \qquad 0 \leq b < \frac{1}{4} \min ( 2 a - p, 4p ) \text{.}
\end{align}
we define the reparametrizations
\begin{align}
\label{eq.F_lh} F_\pm := -(a \pm b) \log f - \frac{b}{p} f^{\mp p} \text{.}
\end{align}
\end{definition}

In particular, $F_-$ will be our desired reparametrization on $\{f < 1\}$, while $F_+$ 
will be applicable in the opposite region $\{f > 1\}$.
By applying Proposition \ref{thm.carleman_gen} with $F_\pm$ and $U \equiv 0$, 
we will derive the following inequalities:

\begin{theorem} \label{thm.carleman_lh}
Let $\phi \in \mc{C}^2 (\mc{D})$, and fix $a, b, p \in \R$ satisfying \eqref{eq.abp_lh}.
Let $\Omega \subseteq \mc{D}$ be an admissible region, and partition $\Omega$ as
\begin{align*}
\Omega_l := \{ Q \in \Omega \mid f (Q) < 1 \} \text{,} \qquad \Omega_h := \{ Q \in \Omega \mid f (Q) > 1 \} \text{.}
\end{align*}
Then, there exist constants $C, K > 0$ such that:
\begin{align}
\label{eq.carleman_l} &C b^2 p \int_{\Omega_l} f^{ 2 (a - b) } f^{ p - 1 } \phi^2 \leq K a^{-1} \int_{\Omega_l} f^{ 2 (a - b) } \cdot f | \Box \phi |^2 + \int_{ \partial \Omega_l } P^-_\beta \mc{N}^\beta \text{,} \\
\label{eq.carleman_h} &C b^2 p \int_{\Omega_h} f^{ 2 (a + b) } f^{ -p - 1 } \phi^2 \leq K a^{-1} \int_{\Omega_h} f^{ 2 (a + b) } \cdot f | \Box \phi |^2 + \int_{ \partial \Omega_h } P^+_\beta \mc{N}^\beta \text{,}
\end{align}
where $\mc{N}$ denotes the oriented unit normals of $\partial \Omega_l$ and $\partial \Omega_h$, and where the $1$-forms $P^-$ and $P^+$ are defined via the following formula:
\begin{align}
\label{eq.carleman_lh_current} P^\pm_\beta &:= e^{-2 F_\pm} \left( \nabla^\alpha f \cdot \nabla_\alpha \phi \nabla_\beta \phi - \frac{1}{2} \nabla_\beta f \cdot \nabla^\mu \phi \nabla_\mu \phi \right) \\
\notag &\qquad + e^{-2 F_\pm} \left( \frac{n - 1}{4} - f F_\pm' \right) \cdot \phi \nabla_\beta \phi \\
\notag &\qquad + e^{-2 F_\pm} \left[ \left( f F_\pm' - \frac{n - 1}{4} \right) F_\pm' - \frac{1}{2} b p f^{\mp p - 1} \right] \nabla_\beta f \cdot \phi^2 \text{.}
\end{align}
Furthermore, on the middle boundary $\{f = 1\}$, we have that
\begin{align}
\label{eq.carleman_lh_match} P^- |_{ f = 1 } = P^+ |_{ f = 1 } \text{.}
\end{align}
\end{theorem}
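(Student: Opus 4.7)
The plan is to invoke Proposition \ref{thm.carleman_gen} with $U \equiv 0$ (so that the nonlinear bulk $\mc{B}_U^F$ drops out entirely) separately on $\Omega_l$ using the reparametrization $F_-$ and on $\Omega_h$ using $F_+$. The estimates \eqref{eq.carleman_l}--\eqref{eq.carleman_h} will fall out once I identify the positive bulk coefficient $f|F'|G_F - H_F$ and the weight $e^{-2F}|F'|^{-1}$ on each region, and the current \eqref{eq.carleman_lh_current} is just \eqref{eq.current} specialized to $U \equiv 0$ after writing $\tfrac{1}{2} G_{F_\pm} = \tfrac{1}{2} bp\, f^{\mp p - 1}$.

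First I would carry out the one-variable calculus on $F_\pm$ from \eqref{eq.F_lh}: direct differentiation gives $fF'_\pm = -(a\pm b) \pm b f^{\mp p}$, hence $G_{F_\pm} = bp\, f^{\mp p - 1}$ and $H_{F_\pm} = \pm \tfrac{p}{2}\, G_{F_\pm}$. Using the constraints \eqref{eq.abp_lh}---in particular $b < (2a-p)/4 < a/2$---one finds $f|F'_\pm| \in [a/2,\, 2a]$ on the respective region, which both verifies that $F_\pm$ is inward-directed and yields $|F'_\pm|^{-1} = f/(f|F'_\pm|) \leq 2f/a$. Substituting into the positive bulk gives, schematically,
\[
 f|F'_\pm| G_{F_\pm} - H_{F_\pm} = bp\, f^{\mp p - 1} \Bigl[ \bigl(a \pm b \mp \tfrac{p}{2}\bigr) \pm b\, f^{\mp p}\Bigr],
\]
and \eqref{eq.abp_lh} is arranged precisely so that the bracket is bounded below by $b$ in both cases (on $\Omega_l$, $a - b - p/2 > b$ directly from $4b < 2a - p$; on $\Omega_h$, $b\, f^{-p} < b < a + p/2$), giving the pointwise lower bound $\geq b^2 p\, f^{\mp p - 1}$. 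The weight factors as $e^{-2F_\pm} = f^{2(a\pm b)}\, e^{(2b/p) f^{\mp p}}$, and the exponential is uniformly comparable to $1$ on the respective region (using $f^{\mp p} < 1$ and $b/p < 4$). Inserting all of this into \eqref{eq.carleman_gen} produces \eqref{eq.carleman_l}--\eqref{eq.carleman_h} with universal $C, K > 0$.

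The matching \eqref{eq.carleman_lh_match} is a direct evaluation: at $f = 1$ one has $F_\pm(1) = -b/p$, $F'_\pm(1) = -a$, $fF'_\pm(1) = -a$, and $G_{F_\pm}(1) = bp$, so every coefficient appearing in \eqref{eq.carleman_lh_current} is $\pm$-independent at the interface and the two currents agree as one-forms on $\{f = 1\}$.

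The main difficulty is the algebraic tuning of $F_\pm$. The bare logarithmic reparametrization $-a \log f$ would correspond to power-law decay but has $G_F = H_F = 0$, yielding no positive bulk whatsoever; the entire bulk must be manufactured by the correction $\mp (b/p) f^{\mp p}$. The constraints in \eqref{eq.abp_lh} are calibrated so that (i) the correction dominates $H_{F_\pm}$, producing the $b^2 p$ factor in the bulk, (ii) $F_\pm$ remains inward-directed on the whole region and $|F'_\pm|$ stays comparable to $a/f$, and (iii) $F_-$ and $F_+$ match, together with the relevant derivatives, at $f = 1$, so that the two estimates can be glued into a single global estimate on $\Omega$ in applications.
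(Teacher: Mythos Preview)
Your approach is exactly the paper's: apply Proposition~\ref{thm.carleman_gen} with $U\equiv 0$ and $F=F_\pm$ on $\Omega_l$, $\Omega_h$ respectively, compute $G_{F_\pm}$, $H_{F_\pm}$, verify inward-directedness and the bulk lower bound, and read off the matching at $f=1$. Two sign slips to fix: one should have $H_{F_\pm}=\mp\tfrac{p}{2}G_{F_\pm}$ (not $\pm$), so the bulk bracket is $a\pm b\pm\tfrac{p}{2}\mp b f^{\mp p}$; your verbal justifications (``$a-b-p/2>b$'' on $\Omega_l$, ``$bf^{-p}<b<a+p/2$'' on $\Omega_h$) already use these correct signs, so the argument goes through. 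Also, \eqref{eq.abp_lh} gives $b/p<1$, not $b/p<4$, which is what makes $e^{(2b/p)f^{\mp p}}\in(1,e^2]$ uniformly.
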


\subsubsection{Proof of Theorem \ref{thm.carleman_lh}}

We begin with some elementary computations.

\begin{lemma} \label{thm.abp_lh_comp}
The following inequalities hold:
\begin{align}
\label{eq.abp_lh_comp} b < \frac{a}{2} \text{,} \qquad a \pm b \simeq a \text{,} \qquad a - b - \frac{1}{2} p > b \text{.}
\end{align}
\end{lemma}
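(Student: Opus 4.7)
The plan is to observe that all three inequalities in \eqref{eq.abp_lh_comp} follow by simply unpacking the two constituent bounds inside the $\min$ in the hypothesis \eqref{eq.abp_lh} on $b$, namely
\[
b < \tfrac{1}{4}(2a - p), \qquad b < p.
\]
First I would derive the first inequality $b < a/2$: since $0 < p$, the first of the two bounds above gives $b < \tfrac{1}{4}(2a - p) < \tfrac{a}{2}$. Next, for the middle inequality $a \pm b \simeq a$, I would use $b < a/2$, which yields $\tfrac{a}{2} < a - b$ and $a + b < \tfrac{3a}{2}$; both quantities are pinned between $a/2$ and $3a/2$, so each is comparable to $a$ with universal constants.

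For the third inequality $a - b - \tfrac{1}{2}p > b$, I would rearrange it to the equivalent form $2b < a - \tfrac{p}{2}$, i.e., $b < \tfrac{1}{4}(2a - p)$, which is precisely the first half of the hypothesis on $b$. This step is essentially a tautology given the chosen parametrization in Definition \ref{def.abp_lh_pre}; the point of the cryptic factor $\tfrac{1}{4}(2a - p)$ appearing in \eqref{eq.abp_lh} is exactly to guarantee this inequality, which in turn will be used later to ensure that the correction term $f^{-p}$ in $F_\pm$ contributes the required positive zeroth-order bulk without being overwhelmed by $a$-dependent terms.

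There is no genuine obstacle here: the lemma is a direct algebraic consequence of the definition of the admissible parameter range. The slightly nontrivial step is just recognizing which of the two components of $\min(2a - p, 4p)$ controls which inequality --- the first component $(2a-p)/4$ controls inequalities (1) and (3), while the second component $p$ is not actually needed for this lemma (it will be used elsewhere, presumably to bound $b$ against $p$ when estimating the correction terms in $G_{F_\pm}$ and $H_{F_\pm}$).
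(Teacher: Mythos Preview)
Your proposal is correct and follows essentially the same approach as the paper. The only cosmetic difference is in the third inequality: you rearrange $a - b - \tfrac{1}{2}p > b$ directly into the hypothesis $b < \tfrac{1}{4}(2a-p)$, whereas the paper applies that same bound twice in a short chain, $a - b - \tfrac{p}{2} > a - \tfrac{1}{4}(2a-p) - \tfrac{p}{2} = \tfrac{1}{2}a - \tfrac{1}{4}p > b$; your version is if anything slightly cleaner.
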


\begin{proof}
The first inequality follows from \eqref{eq.abp_lh}, and the comparison $a \pm b \simeq a$ follows immediately from this.
For the remaining inequality, we apply \eqref{eq.abp_lh} twice:
\begin{align*}
a - b - \frac{p}{2} &> a - \frac{1}{4} ( 2 a - p ) - \frac{p}{2} = \frac{1}{2} a - \frac{1}{4} p > b \text{.} \qedhere
\end{align*}
\end{proof}

\begin{lemma} \label{thm.F_lh_deriv}
The following identities hold for $F_\pm$:
\begin{align}
\label{eq.F_lh_deriv} F_\pm' = - (a \pm b) f^{-1} \pm b f^{\mp p - 1} \text{,}
\end{align}
Furthermore, recalling the notations in \eqref{eq.G}, we have that
\begin{align}
\label{eq.GH_lh} G_{ F_\pm } = b p f^{\mp p - 1} \text{,} \qquad H_{ F_\pm } = \mp \frac{1}{2} b p^2 f^{\mp p - 1} \text{.}
\end{align}
In particular, on the level set $\mc{F}_1 = \{ f = 1 \}$, we have
\begin{align}
\label{eq.F_lh_match} F_+ |_{ \mc{F}_1 } = F_- |_{ \mc{F}_1 } \text{,} \qquad F_+' |_{ \mc{F}_1 } = F_-' |_{ \mc{F}_1 } \text{,} \qquad G_{F_+} |_{ \mc{F}_1 } = G_{F_-} |_{ \mc{F}_1 } \text{.}
\end{align}
\end{lemma}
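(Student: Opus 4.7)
The plan is to verify each identity by direct computation, since $F_\pm$ are explicit elementary functions of $f$ and the operators $G$ and $H$ are defined by prescribed first derivatives in $f$ (see \eqref{eq.G}). There is no conceptual obstacle here; the only care required is correctly tracking the $\mp$ and $\pm$ signs arising from differentiating $f^{\mp p}$.

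First I would differentiate $F_\pm = -(a\pm b)\log f - \frac{b}{p} f^{\mp p}$ once with respect to $f$. The $\log f$ term contributes $-(a\pm b) f^{-1}$, while differentiating $f^{\mp p}$ produces $\mp p \cdot f^{\mp p - 1}$, and this combined with the prefactor $-b/p$ gives $\pm b f^{\mp p - 1}$. Summing these yields \eqref{eq.F_lh_deriv}.

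Next, to compute $G_{F_\pm}$, I would form $f F_\pm' = -(a\pm b) \pm b f^{\mp p}$ and differentiate once more: the constant drops out and the remaining term contributes $\pm b \cdot (\mp p) f^{\mp p - 1} = - bp f^{\mp p - 1}$. Taking the negative gives $G_{F_\pm} = bp f^{\mp p -1}$. For $H_{F_\pm}$, I would then form $f G_{F_\pm} = bp f^{\mp p}$ and apply $\frac{1}{2}(\,\cdot\,)'$, which by the same computation yields $H_{F_\pm} = \mp \frac{1}{2} bp^2 f^{\mp p - 1}$, as claimed.

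Finally, the matching identities \eqref{eq.F_lh_match} on $\mc{F}_1 = \{f=1\}$ follow by evaluation: $\log 1 = 0$ and $1^{\mp p} = 1$ give $F_\pm|_{\mc{F}_1} = -b/p$ (independent of the sign choice); $F_\pm'|_{\mc{F}_1} = -(a\pm b) \pm b = -a$; and $G_{F_\pm}|_{\mc{F}_1} = bp$. All three expressions agree across $\pm$, completing the lemma. No step presents any real difficulty; the content of the lemma is simply to record the algebraic identities that will later feed into Theorem \ref{thm.carleman_lh} and guarantee that the currents $P^\pm$ glue continuously across the hyperboloid $f=1$.
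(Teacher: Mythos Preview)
Your proposal is correct and follows exactly the same approach as the paper, which records the proof simply as ``These are direct computations.'' Your version merely spells out those computations in detail, and every step checks out.
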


\begin{proof}
These are direct computations.
\end{proof}

\begin{lemma} \label{thm.F_lh_comp}
The following comparisons hold:
\begin{itemize}
\item If $0 < f \leq 1$, then
\begin{align}
\label{eq.F_l_comp} f^{a - b} < e^{-F_-} \leq e f^{a - b} \text{,} \qquad -a f^{-1} \leq F_-' < - (a - b) f^{-1} \text{.}
\end{align}

\item If $1 \leq f < \infty$, then
\begin{align}
\label{eq.F_h_comp} f^{a + b} < e^{-F_+} \leq e f^{a + b} \text{,} \qquad - (a + b) f^{-1} < F_+' \leq -a f^{-1} \text{.}
\end{align}
\end{itemize}
In particular, \eqref{eq.F_l_comp} implies that $F_-$ is inward-directed whenever $f < 1$, while \eqref{eq.F_h_comp} implies $F_+$ is inward-directed whenever $f > 1$.
\end{lemma}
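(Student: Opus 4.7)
The proof should be a direct substitution using the explicit formulas for $F_\pm$ from Definition \ref{def.abp_lh_pre}, together with case analysis on whether $f \leq 1$ or $f \geq 1$, followed by invocation of the parameter constraints \eqref{eq.abp_lh}. There is no conceptual difficulty: the author's own remark (``These are direct computations'') is accurate.

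First I would rewrite
\[ e^{-F_\pm} = f^{a \pm b} \exp\!\left( \frac{b}{p} f^{\mp p} \right), \]
which follows immediately from $F_\pm = -(a \pm b) \log f - (b/p) f^{\mp p}$. In the range $0 < f \leq 1$ the quantity $f^p$ lies in $(0,1]$, so the exponential factor for $F_-$ satisfies $1 \leq \exp((b/p) f^p) \leq e^{b/p}$. To conclude that $e^{b/p} < e$, I use the parameter constraint $b < \frac{1}{4} \min(2a-p, 4p) \leq p$, which gives $b/p < 1$. This yields $f^{a-b} \leq e^{-F_-} < e f^{a-b}$, and an entirely symmetric argument (now using $f^{-p} \in (0,1]$ for $f \geq 1$) gives the analogous bound for $e^{-F_+}$.

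For the derivative bounds I would use \eqref{eq.F_lh_deriv} and factor the differences from the extremes. For $F_-$ on $\{f \leq 1\}$, I compute
\begin{align*}
F_-' + a f^{-1} &= b f^{-1} - b f^{p-1} = b f^{-1}(1 - f^p) \geq 0, \\
F_-' + (a-b) f^{-1} &= -b f^{p-1} < 0,
\end{align*}
which gives both $-a f^{-1} \leq F_-'$ and $F_-' < -(a-b) f^{-1}$. The second inequality is strict whenever $b > 0$ (the relevant case for Theorem \ref{thm.carleman_lh}). The $F_+$ inequalities on $\{f \geq 1\}$ are obtained in exactly the same way, with $1 - f^p$ replaced by $1 - f^{-p} \geq 0$ in the analogous factorization for $F_+' + a f^{-1}$.

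Finally, the inward-directedness assertions are immediate from what has just been shown: both upper bounds in \eqref{eq.F_l_comp} and \eqref{eq.F_h_comp} read $F_\pm' < -(a \mp b) f^{-1} < 0$, so $F_-' < 0$ on $\{f < 1\}$ and $F_+' < 0$ on $\{f > 1\}$, which is exactly Definition \ref{def.monotone_reparam}. The only ``choice'' made in this proof is identifying which of the constraints bundled into \eqref{eq.abp_lh} is actually needed at each step; in particular, the constraint $b < p$ (rather than the stronger $b < \frac{1}{4} \min(2a-p, 4p)$) is what controls the exponential prefactor, while the weaker $b < a$ is implicit in the comparisons through Lemma \ref{thm.abp_lh_comp}.
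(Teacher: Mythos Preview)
Your proof is correct and follows the same direct-computation approach as the paper, which simply cites \eqref{eq.F_lh}, \eqref{eq.F_lh_deriv}, and the inequality $b p^{-1} < 1$ without writing out the intermediate steps you have supplied. The only cosmetic discrepancy is that your strict/non-strict inequalities on $e^{-F_-}$ are swapped relative to the lemma statement (you obtain $f^{a-b} \leq e^{-F_-} < e\, f^{a-b}$ rather than $f^{a-b} < e^{-F_-} \leq e\, f^{a-b}$); both versions are valid, and the distinction is immaterial for every later use of the lemma.
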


\begin{proof}
The comparisons \eqref{eq.F_l_comp} and \eqref{eq.F_h_comp} follow immediately from \eqref{eq.F_lh}, \eqref{eq.F_lh_deriv}, and the trivial inequality $b p^{-1} < 1$, which is a consequence of \eqref{eq.abp_lh}.
The remaining monotonicity properties follow from \eqref{eq.abp_lh_comp}, \eqref{eq.F_l_comp}, and \eqref{eq.F_h_comp}.
\end{proof}

\begin{lemma} \label{thm.bulk_lh}
The following inequalities hold:
\begin{itemize}
\item If $0 < f < 1$, then
\begin{align}
\label{eq.bulk_l} f | F_-' | G_{ F_- } - H_{ F_- } > b^2 p f^{p - 1} > 0 \text{.}
\end{align}

\item If $1 < f < \infty$, then
\begin{align}
\label{eq.bulk_h} f | F_+' | G_{ F_+ } - H_{ F_+ } > b^2 p f^{-p - 1} > 0 \text{.}
\end{align}
\end{itemize}
\end{lemma}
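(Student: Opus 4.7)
The plan is to substitute the explicit formulas for $F_\pm'$, $G_{F_\pm}$, and $H_{F_\pm}$ from Lemma \ref{thm.F_lh_deriv} directly into the combination $f|F_\pm'|G_{F_\pm} - H_{F_\pm}$, and reduce the lemma to an elementary algebraic comparison. After the substitution, both quantities factor as $bp f^{\mp p - 1}$ times a bracket depending on $a$, $b$, $p$, $f$, so it will suffice to show that each bracket exceeds $b$. The margin required here is exactly what the third inequality in Lemma \ref{thm.abp_lh_comp}, namely $a - b - \frac{p}{2} > b$, was designed to provide.

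For the low case $0 < f < 1$, Lemma \ref{thm.F_lh_comp} gives $F_-' < 0$, so
\[ |F_-'| = (a - b) f^{-1} + b f^{p - 1}. \]
Combined with $G_{F_-} = bp f^{p-1}$ and $H_{F_-} = \frac{1}{2} b p^2 f^{p - 1}$ from \eqref{eq.GH_lh}, a direct computation gives
\[ f |F_-'| G_{F_-} - H_{F_-} = bp f^{p - 1} \left[ a - b - \frac{p}{2} + b f^{p} \right]. \]
Since $b f^p \geq 0$ and $a - b - \frac{p}{2} > b$ by Lemma \ref{thm.abp_lh_comp}, the bracket strictly exceeds $b$, which yields \eqref{eq.bulk_l}.

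For the high case $f > 1$, Lemma \ref{thm.F_lh_comp} gives $F_+' < 0$, so
\[ |F_+'| = (a + b) f^{-1} - b f^{-p - 1}. \]
Using $G_{F_+} = bp f^{-p-1}$ and $H_{F_+} = -\frac{1}{2} b p^2 f^{-p-1}$, one analogously obtains
\[ f |F_+'| G_{F_+} - H_{F_+} = bp f^{-p - 1} \left[ a + b + \frac{p}{2} - b f^{-p} \right]. \]
Since $f^{-p} < 1$ here, $b f^{-p} < b$, so the bracket exceeds $a + \frac{p}{2}$, which is larger than $b$ by $b < \frac{a}{2}$ from Lemma \ref{thm.abp_lh_comp}. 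Positivity of the lower bounds $b^2 p f^{\mp p - 1}$ follows from $b, p > 0$. There is no substantive obstacle in the argument; the real content of the lemma is encoded in the judicious choice of the correction term $-\frac{b}{p} f^{\mp p}$ in \eqref{eq.F_lh}, which is precisely calibrated so that the resulting bracket lies above the threshold $b$.
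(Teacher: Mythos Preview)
Your proof is correct and follows essentially the same route as the paper: substitute the explicit formulas from Lemma \ref{thm.F_lh_deriv}, factor out $bp f^{\mp p-1}$, and bound the remaining bracket using Lemma \ref{thm.abp_lh_comp}. The only cosmetic difference is that in the high case the paper drops the nonnegative term $b - b f^{-p}$ to reach $a + \tfrac{p}{2}$ directly, whereas you phrase this as $b f^{-p} < b$; the content is identical.
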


\begin{proof}
First, in the case $0 < f < 1$, we have
\begin{align}
\label{eql.bulk_1} f | F_-' | G_{ F_- } - H_{ F_- } &= ( a - b + b f^p ) bp f^{p-1} - \frac{1}{2} b p^2 f^{p-1} \\
\notag &= b p f^{p-1} \left( a - b - \frac{1}{2} p + b f^p \right) \\
\notag &\geq b p \left( a - b - \frac{1}{2} p \right) f^{p-1} \text{.}
\end{align}
Similarly, when $1 < f < \infty$, we have
\begin{align}
\label{eql.bulk_2} f | F_+' | G_{ F_+ } - H_{ F_+ } &= ( a + b - b f^{-p} ) bp f^{-p-1} + \frac{1}{2} b p^2 f^{-p-1} \\
\notag &\geq b p \left( a + \frac{1}{2} p \right) f^{-p - 1} \text{.}
\end{align}
The desired inequalities now follow by applying \eqref{eq.abp_lh_comp} to \eqref{eql.bulk_1} and \eqref{eql.bulk_2}.
\end{proof}

We now complete the proof of Theorem \ref{thm.carleman_lh}.
First, for \eqref{eq.carleman_l}, we apply Theorem \ref{thm.carleman_gen} with $F = F_-$ and $U \equiv 0$.
Combining this with \eqref{eq.abp_lh_comp}, \eqref{eq.F_l_comp}, and \eqref{eq.bulk_l} yields
\begin{align}
\label{eql.carleman_l_2} &C b^2 p \int_{ \Omega_l } f^{ 2 (a - b) } f^{p - 1} \phi^2 \leq K a^{-1} \int_{ \Omega_l } f^{2 (a - b) } f | \Box \phi |^2 + \int_{ \partial \Omega_l } P^{F_-}_\beta \mc{N}^\beta \text{,}
\end{align}
where $C$ and $K$ are constants, and where $\mc{N}$, $P^{F_-}$ are as defined in Theorem \ref{thm.carleman_gen}.
Since $U \equiv 0$, then $P^{F_-}$ is precisely the one-form $P^-$ in \eqref{eq.carleman_lh_current}, proving \eqref{eq.carleman_l}.

Similarly, for \eqref{eq.carleman_h}, we apply Theorem \ref{thm.carleman_gen} with $F = F_+$ and $U \equiv 0$, and we combine the result with \eqref{eq.abp_lh_comp}, \eqref{eq.F_h_comp}, and \eqref{eq.bulk_h}, which yields
\begin{align}
\label{eql.carleman_h_2} &C b^2 p \int_{ \Omega_h } f^{ 2 (a + b) } f^{-p - 1} \phi^2 \leq K a^{-1} \int_{ \Omega_h } f^{2 (a + b) } f | \Box \phi |^2 + \int_{ \partial \Omega_h } P^{F_+}_\beta \mc{N}^\beta \text{,}
\end{align}
Since $P^{F_+}$ is precisely $P^+$, we obtain \eqref{eq.carleman_h}.

Finally, \eqref{eq.carleman_lh_match} is an immediate consequence of \eqref{eq.carleman_lh_current} and \eqref{eq.F_lh_match}

\subsection{The Nonlinear Estimate} \label{sec:carleman_nonlinear}

We next discuss improved estimates for nonlinear wave equations, in particular those found in Theorems \ref{thm.uc_focusing} and \ref{thm.uc_defocusing}.
With respect to the terminology within Proposition \ref{thm.carleman_gen}, we consider $U \in \mc{C}^1 (\mc{D} \times \R)$ of the form
\begin{align}
\label{eq.potential_nonlinear} U (Q, \phi) = \pm \frac{1}{p+1} V (Q) \cdot | \phi |^{p+1} \text{,} \qquad p \geq 1 \text{,}
\end{align}
where $V \in \mc{C}^1 (\mc{D})$ is strictly positive.
From Definition \ref{def.wave_op}, this corresponds to
\begin{align}
\label{eq.wave_nonlinear} \Box_U \phi = \Box \phi \pm V \cdot | \phi |^{p - 1} \phi \text{,} \qquad p \geq 1 \text{.}
\end{align}

Since we will be expecting positive bulk terms arising solely from $U$ (that is, $- \mc{B}^F_U > 0$ in \eqref{eq.carleman_gen}), we no longer require the correction terms used throughout Section \ref{sec:carleman_linear} for our reparametrizations of $f$.
In other words, we can simply use
\begin{align}
\label{eq.F_0} F_0 = -a \log f \text{,} \qquad a > 0 \text{.}
\end{align}
In particular, we need not consider the regions $\{ f > 1 \}$ and $\{ f < 1 \}$ separately.
This makes some aspects of the analysis much simpler compared to Theorem \ref{thm.carleman_lh}.

The Carleman-type estimate we will derive is the following:

\begin{theorem} \label{thm.carleman_nl}
Let $\phi \in \mc{C}^2 (\mc{D})$,
 and let $\Omega \subseteq \mc{D}$ be an admissible region.
Furthermore, let $p \geq 1$, and let $V \in \mc{C}^1 (\mc{D})$ be strictly positive.
Then,
\begin{align}
\label{eq.carleman_nl} \pm \frac{1}{p + 1} \int_\Omega f^{2a} \cdot V \Gamma_V \cdot | \phi |^{p + 1} &\leq \frac{1}{8 a} \int_\Omega f^{2 a} f \cdot | \Box_V^\pm \phi |^2 + \int_{ \partial \Omega } P^{\pm V}_\beta \mc{N}^\beta \text{,}
\end{align}
where $\mc{N}$ is the oriented unit normal to $\partial \Omega$, and where:
\begin{align}
\label{eq.bulk_gamma} \Box_V^\pm \phi &:= \Box \phi \pm V | \phi |^{p - 1} \phi \text{,} \\
\notag \Gamma_V &:= \nabla^\alpha f \nabla_\alpha (\log V) - \frac{n - 1 + 4 a}{4} \left( p - 1 - \frac{4}{n - 1 + 4a} \right) \text{,} \\
\notag P^{\pm V}_\beta &:= f^{2 a} \left( \nabla^\alpha f \cdot \nabla_\alpha \phi \nabla_\beta \phi - \frac{1}{2} \nabla_\beta f \cdot \nabla^\mu \phi \nabla_\mu \phi \right) \\
\notag &\qquad \pm \frac{1}{p + 1} f^{2 a} \nabla_\beta f \cdot V | \phi |^{p + 1} + \left( \frac{n - 1}{4} + a \right) f^{2 a} \cdot \phi \nabla_\beta \phi \\
\notag &\qquad + a \left( \frac{n - 1}{4} + a \right) f^{2 a} f^{-1} \nabla_\beta f \cdot \phi^2 \text{.}
\end{align}
\end{theorem}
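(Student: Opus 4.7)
The plan is to deduce Theorem \ref{thm.carleman_nl} directly from Proposition \ref{thm.carleman_gen} by specializing to the reparametrization $F_0 = -a \log f$ (as flagged in \eqref{eq.F_0}) and to the potential function $U(Q,\phi) = \pm \frac{1}{p+1} V(Q) |\phi|^{p+1}$, which is precisely the $U$ whose associated operator $\Box_U$ agrees with $\Box_V^\pm$ of the theorem.  Since $a > 0$ and $f > 0$ on $\mc{D}$, the reparametrization $F_0$ is inward-directed on any admissible $\Omega$, so Proposition \ref{thm.carleman_gen} applies.

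The first observation is that, for $F_0 = -a \log f$, one has $F_0' = -a f^{-1}$, so $f F_0' \equiv -a$ is constant.  By \eqref{eq.G} this forces $G_{F_0} \equiv 0$, and hence $H_{F_0} \equiv 0$ as well.  Thus the "geometric" bulk term $\int_\Omega e^{-2F}(f|F'|G_F - H_F)\phi^2$ on the right-hand side of \eqref{eq.carleman_gen} drops out entirely --- this is the key reason no correction terms in $F$ are needed here, in contrast to the linear estimate of Theorem \ref{thm.carleman_lh}.  The remaining positive bulk must therefore be extracted entirely from $-\mc{B}^{F_0}_U$.

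The computational heart of the proof is to evaluate $\mc{B}^{F_0}_U$ using formula \eqref{eq.bulk}.  Here $\dot{U}(\phi)\phi = \pm V |\phi|^{p+1}$, $\nabla_\alpha U(\phi) = \pm \frac{1}{p+1} \nabla_\alpha V \cdot |\phi|^{p+1}$, and $U(\phi) = \pm \frac{1}{p+1} V |\phi|^{p+1}$.  Substituting $fF_0' = -a$ and $e^{-2F_0} = f^{2a}$, each of the three pieces of $\mc{B}^{F_0}_U$ carries a factor of $\pm \frac{f^{2a} V |\phi|^{p+1}}{p+1}$, and collecting the scalar coefficients yields
\begin{align*}
\mc{B}^{F_0}_U = \pm \frac{f^{2a} V |\phi|^{p+1}}{p+1} \left[ \frac{(n-1+4a)(p+1)}{4} - \frac{n+1+4a}{2} - \nabla^\alpha f \nabla_\alpha(\log V) \right].
\end{align*}
The algebraic identity $(n-1+4a)(p+1) - 2(n+1+4a) = (n-1+4a)(p-1) - 4$ then simplifies the bracketed expression to $-\Gamma_V$, so that $-\mc{B}^{F_0}_U = \pm \frac{f^{2a} V \Gamma_V}{p+1} |\phi|^{p+1}$.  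This step is where the specific form of $\Gamma_V$ in \eqref{eq.bulk_gamma} is forced; I expect it to be the main place where care is needed, though the computation is routine.

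Finally, on the weight and boundary sides, $e^{-2F_0}|F_0'|^{-1} = \frac{1}{a} f^{2a} f$, so the left-hand side of \eqref{eq.carleman_gen} becomes $\frac{1}{8a} \int_\Omega f^{2a} f |\Box_V^\pm \phi|^2$.  Substituting $e^{-2F_0} = f^{2a}$, $fF_0' = -a$, $F_0' = -a f^{-1}$, and $G_{F_0} = 0$ into the general current formula \eqref{eq.current} identifies $P^{F_0}$ term-by-term with the $P^{\pm V}$ of \eqref{eq.bulk_gamma}; in particular the fourth term $\left[(fF'-\tfrac{n-1}{4})F' - \tfrac{1}{2} G_F\right] \nabla_\beta f \cdot \phi^2$ reduces cleanly to $a(\tfrac{n-1}{4}+a) f^{2a} f^{-1} \nabla_\beta f \cdot \phi^2$.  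Rearranging \eqref{eq.carleman_gen} with the vanishing first bulk term and the computed $-\mc{B}^{F_0}_U$ moved to the left then gives \eqref{eq.carleman_nl} directly.
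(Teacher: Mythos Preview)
Your proposal is correct and follows essentially the same approach as the paper: apply Proposition \ref{thm.carleman_gen} with $F = F_0 = -a\log f$ and $U(Q,\phi) = \pm\frac{1}{p+1}V(Q)|\phi|^{p+1}$, observe that $G_{F_0} = H_{F_0} = 0$ so the only bulk comes from $-\mc{B}_U^{F_0}$, compute this to be $\pm\frac{1}{p+1}f^{2a}V\Gamma_V|\phi|^{p+1}$ (the paper packages this computation as Lemma \ref{thm.nonlinear_bulk}), and identify $P^{F_0}$ with $P^{\pm V}$ term by term.
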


The remainder of this section is dedicated to the proof of Theorem \ref{thm.carleman_nl}.

\subsubsection{Proof of Theorem \ref{thm.carleman_nl}}

The main new task is to examine the bulk term $\mc{B}_U^{F_0}$ (see \eqref{eq.bulk}) arising from the $U$ defined in \eqref{eq.potential_nonlinear}.
From a direct computation using \eqref{eq.bulk} and \eqref{eq.potential_nonlinear}, we obtain the following:

\begin{lemma} \label{thm.nonlinear_bulk}
Let $U$ and $F_0$ be as in \eqref{eq.potential_nonlinear} and \eqref{eq.F_0}.
Then,
\begin{align}
\label{eq.nonlinear_bulk} - \mc{B}_U^{ F_0 } &= \pm \frac{1}{p + 1} f^{2a} V \cdot \Gamma_V \cdot | \phi |^{p+1} \text{,}
\end{align}
where $\mc{B}^{F_0}_U$ is as defined in \eqref{eq.bulk}, and $\Gamma_V$ is as in \eqref{eq.bulk_gamma}.
\end{lemma}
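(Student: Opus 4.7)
The proof is a direct computation, so the plan is simply to expand every factor in the formula \eqref{eq.bulk} for $\mc{B}^{F}_U$ using the specific choices $F = F_0 = -a\log f$ and $U(Q,\phi) = \pm\frac{1}{p+1}V(Q)|\phi|^{p+1}$, and then to verify that the resulting scalar coefficient multiplying $\pm\frac{1}{p+1}f^{2a}V|\phi|^{p+1}$ rearranges into $-\Gamma_V$. First I would record the three basic substitutions: $F_0' = -a/f$ (so $fF_0' = -a$), $e^{-2F_0} = f^{2a}$, and $\dot U(\phi) = \pm V|\phi|^{p-1}\phi$, whence $\dot U(\phi)\phi = \pm V|\phi|^{p+1}$. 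For the middle term in \eqref{eq.bulk} I would use the convention stated just after Definition \ref{def.wave_op}: $\nabla_\alpha U$ differentiates only in the $\mc{D}$-component, so $\nabla_\alpha U(\phi) = \pm\frac{1}{p+1}(\nabla_\alpha V)\,|\phi|^{p+1}$.

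Plugging these into \eqref{eq.bulk} yields
\begin{align*}
\mc{B}^{F_0}_U &= \pm f^{2a}\left(\tfrac{n-1}{4}+a\right)V|\phi|^{p+1} \mp \tfrac{1}{p+1}f^{2a}\nabla^{\alpha}f\,\nabla_{\alpha}V\cdot|\phi|^{p+1}\\
&\qquad \mp \tfrac{2}{p+1}f^{2a}\left(\tfrac{n+1}{4}+a\right)V|\phi|^{p+1}.
\end{align*}
Factoring $\pm\frac{1}{p+1}f^{2a}V|\phi|^{p+1}$ out and using $\nabla^\alpha f\nabla_\alpha V / V = \nabla^\alpha f\nabla_\alpha(\log V)$, the bracket becomes
\[
(p+1)\!\left(\tfrac{n-1}{4}+a\right) - 2\!\left(\tfrac{n+1}{4}+a\right) - \nabla^\alpha f\,\nabla_\alpha(\log V).
\]

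The only real task is then the arithmetic simplification of the purely numerical part. Writing $\frac{n-1}{4}+a = \frac{n-1+4a}{4}$ and $\frac{n+1}{4}+a = \frac{n-1+4a}{4} + \frac{1}{2}$, the first two terms combine to $\frac{n-1+4a}{4}(p-1) - 1$, which is exactly $\frac{n-1+4a}{4}\bigl(p-1-\frac{4}{n-1+4a}\bigr)$. Therefore the bracket equals $-\Gamma_V$, and multiplying through by $-1$ gives \eqref{eq.nonlinear_bulk}. The only ``obstacle'' is keeping the signs straight and remembering that $\nabla_\alpha U$ in \eqref{eq.bulk} does not hit $\phi$; once those bookkeeping points are handled, the identity falls out in one line of algebra.
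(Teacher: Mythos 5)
Your proof is correct and follows essentially the same route as the paper's: a direct substitution of $F_0 = -a\log f$ and $U = \pm\frac{1}{p+1}V|\phi|^{p+1}$ into \eqref{eq.bulk}, followed by arithmetic to recognize $-\Gamma_V$ as the scalar coefficient. The only cosmetic difference is that the paper first expands $\mc{B}_U^F$ for an arbitrary reparametrization $F$ (introducing the auxiliary constant $p^\ast = \frac{(p+1)(n-1)}{4} - \frac{n+1}{2}$) before specializing to $F_0$, whereas you substitute $F_0$ at the outset; both reduce to the same one-line algebraic check.
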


\begin{proof}
For an arbitrary reparametrization, we compute, using \eqref{eq.bulk} and \eqref{eq.potential_nonlinear},
\begin{align}
\mc{B}_U^F &= \pm e^{-2 F} \left( \frac{n - 1}{4} - f F' \right) V \cdot | \phi |^{p+1} \mp \frac{1}{p+1} e^{-2 F} \nabla^\alpha f \nabla_\alpha V \cdot | \phi |^{p+1} \\
\notag &\qquad \mp \frac{2}{p + 1} e^{-2 F} \left( \frac{n + 1}{4} - f F' \right) V \cdot | \phi |^{p+1} \\
\notag &= \mp \frac{1}{p + 1} e^{-2 F} [ \nabla^\alpha f \nabla_\alpha V - p^\ast V + (p - 1) f F' V ] \cdot | \phi |^{p+1} \text{,}
\end{align}
where
\begin{align*}
p^\ast = \frac{ (p + 1) (n - 1) }{4} - \frac{n + 1}{2} \text{.}
\end{align*}
Substituting $F_0$ for $F$, and noting that
\begin{align*}
f F_0' \equiv -a \text{,} \qquad e^{-2 F} = f^{2 a} \text{,}
\end{align*}
we immediately obtain \eqref{eq.nonlinear_bulk}.
\end{proof}

As a result, $-\mc{B}_U^{ F_0 }$ is strictly positive $\mc{D}$ if and only if $\pm \Gamma_V > 0$.

\begin{remark}
The computations in Lemma \ref{thm.nonlinear_bulk} can readily be generalized.
For example, one can consider wave operators of the form
\begin{align}
\label{eq.wave_nonlinear_gen} U (Q, \phi) = \pm V (Q) W (\phi) \text{,} \qquad \Box_U \phi = \Box \phi \pm V \cdot \dot{W} (\phi) \text{,}
\end{align}
where we also assume $V (Q) \cdot W (\phi) > 0$ for all $(Q, \phi)$.
(These correspond to further generalizations of focusing and defocusing wave operators.)
From analogous calculations, one sees that $-\mc{B}_U^{ F_0 }$ is everywhere positive if
\begin{itemize}
\item There is some $p \geq 1$ such that $\pm \Gamma_V > 0$.

\item $W (\phi)$ grows at most as quickly as $| \phi |^{p + 1}$ when $V W$ is positive.

\item $W (\phi)$ grows at least as quickly as $| \phi |^{p + 1}$ when $V W$ is negative.
\end{itemize}
Such statements can be even further extended to more general $U$, but precise formulations of these statements tend to be more complicated.
\end{remark}

Theorem \ref{thm.carleman_nl} now follows by applying \eqref{eq.carleman_gen}---with $F = F_0 = - a \log f $ and $U$ as in \eqref{eq.potential_nonlinear}---and then by expanding $\mc{B}_U^{F_0}$ using \eqref{eq.nonlinear_bulk}.

\section{Proofs of the Main Results} \label{sec:uc}

The goal of this section is to prove the global uniqueness results---Theorems \ref{thm.uc_finite}, \ref{thm.uc_focusing}, and \ref{thm.uc_defocusing}---from Section \ref{sec:intro_results}.
The main steps will be to apply the Carleman-type estimates from the preceding section: Theorems \ref{thm.carleman_lh} for the proof of Theorem \ref{thm.uc_finite}, and Theorem \ref{thm.carleman_nl} for the proofs of Theorems \ref{thm.uc_focusing} and \ref{thm.uc_defocusing}.

Note first of all that the weight $(1 + |u|) (1 + |v|)$ can be written as
\begin{align}
\label{eq.radiation_field} (1 + |u|) (1 + |v|) = (1 + r + f) \text{.}
\end{align}
Thus, the decay conditions \eqref{eq.wave_decay} can be more conveniently expressed as
\begin{align}
\label{eq.wave_decay_ex} \sup_{ \mc{D} } \left[ ( 1 + r + f )^\frac{n - 1 + \delta}{2} ( | u \cdot \partial_u \phi | + | v \cdot \partial_v \phi | ) \right] < \infty \text{,} \\
\notag \sup_{ \mc{D} \cap \{ f < 1 \} } \left[ ( 1 + r )^\frac{n - 1 + \delta}{2} f^\frac{1}{2} | \nasla \phi | \right] < \infty \text{,} \\
\notag \sup_{ \mc{D} } \left[ ( 1 + r + f )^\frac{n - 1 + \delta}{2} | \phi | \right] < \infty \text{,}
\end{align}
while the special decay condition \eqref{eq.wave_decay_focusing} is equivalent to
\begin{align}
\label{eq.wave_decay_focusing_ex} \sup_{ \mc{D} \cap \{ f > 1 \} } \left[ (1 + r + f)^\frac{n - 1 + \delta}{p + 1} f^\frac{1}{p + 1} V^\frac{1}{p + 1} | \phi | \right] < \infty \text{.}
\end{align}
From now on, we will refer to \eqref{eq.wave_decay_ex} and \eqref{eq.wave_decay_focusing_ex} as our decay assumptions.

\subsection{Special Domains}

The first preliminary step is to define the admissible regions on which we apply our estimates.
A natural choice for this $\Omega$ would be domains with level sets of $f$ as its boundary.
We denote these level sets by
\begin{align}
\label{eq.levelf} \mc{F}_\omega := \{ Q\in \mc{D} \mid f (Q) = \omega \} \text{.}
\end{align}
Observe that the $\mc{F}_\omega$'s, for all $0 < \omega < \infty$, form a family of timelike hyperboloids terminating at the corners of $\mc{D}$ on future and past null infinity.
\footnote{See Figure \ref{fig.penrose}.}

The $\mc{F}_\omega$'s are useful here since they characterize the boundary of $\mc{D}$ in the limit.
Indeed, in the Penrose-compactified sense, $\mc{F}_\omega$ tends toward the null cone about the origin as $\omega \searrow 0$, and $\mc{F}_\omega$ tends toward the outer half of null infinity as $\omega \nearrow \infty$.

However, one defect in the above is that the region between two $\mc{F}_\omega$'s fails to be bounded.
As a result, we define an additional function
\begin{align}
\label{eq.h} h \in \mc{C}^\infty (\mc{D}) \text{,} \qquad h := - \frac{v}{u} = \frac{ r + t }{ r - t } \text{,}
\end{align}
whose level sets we denote by
\begin{align}
\label{eq.levelh} \mc{H}^\tau := \{ Q \in \mc{D} \mid h (Q) = \tau \} \text{.}
\end{align}
The $\mc{H}^\tau$'s, for $0 < \tau < \infty$, form a family of spacelike cones terminating at the origin and at spacelike infinity.
Moreover, in the Penrose-compactified picture:
\begin{itemize}
\item As $\tau \nearrow \infty$, the $\mc{H}^\tau$'s tend toward both the future null cone about the origin and the outer half of future null infinity.

\item As $\tau \searrow 0$, the $\mc{H}^\tau$'s tend toward both the past null cone about the origin and the outer half of past null infinity.
\end{itemize}

The regions we wish to consider are those bounded by level sets of $f$ and $h$.
More specifically, given $0 < \rho < \omega < \infty$ and $0 < \sigma < \tau < \infty$, we define
\begin{align}
\label{eq.region_exhaustion} \mc{D}^{\sigma, \tau}_{\rho, \omega} := \{ Q \in \mc{D} \mid \rho < f (Q) < \omega \text{, } \sigma < h (Q) < \tau \} \text{,}
\end{align}
We also define corresponding cutoffs to the $\mc{F}_\omega$'s and $\mc{H}^\tau$'s:
\begin{align}
\label{eq.level_exhaustion} \mc{F}^{\sigma, \tau}_\omega := \{ Q \in \mc{F}_\omega \mid \sigma < h (Q) < \tau \} \text{,} \qquad \mc{H}^\tau_{\rho, \omega} := \{ Q \in \mc{H}^\tau \mid \rho < f (Q) < \omega \} \text{.}
\end{align}

\subsubsection{Basic Properties}

We begin by listing some properties of $f$ and $h$ that will be needed in upcoming computations.
First, the derivative of $h$ satisfy the following:

\begin{lemma} \label{thm.h_deriv}
The following identities hold:
\begin{align}
\label{eq.h_deriv} \partial_u h = \frac{v}{u^2} \text{,} \qquad \partial_v h = - \frac{1}{u} \text{.}
\end{align}
As a result,
\begin{align}
\label{eq.h_grad} \grad h = \frac{1}{2} u^{-2} ( u \cdot \partial_u - v \cdot \partial_v ) \text{,} \qquad \nabla^\alpha h \nabla_\alpha h = - u^{-4} f \text{,} \qquad \nabla^\alpha h \nabla_\alpha f = 0 \text{.}
\end{align}
\end{lemma}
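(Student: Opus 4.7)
The statement is purely computational, so the plan is to unwind the definitions $h = -v/u$ and $f = -uv$ in the null coordinates $(u,v,\omega)$ and invoke the form \eqref{eq.met_phys} of the Minkowski metric. None of the identities require any machinery beyond the quotient rule and the inverse of a $2{\times}2$ block.

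First, I would establish \eqref{eq.h_deriv} by directly differentiating $h = -v/u$ with respect to $u$ and $v$, which gives $\partial_u h = v/u^2$ and $\partial_v h = -1/u$ in one line each. Next, for the gradient, I would record that in the null-spherical coordinates the Minkowski metric \eqref{eq.met_phys} has $g_{uv}=g_{vu}=-2$ and (spherically) $g_{ij} = r^2 \mathring{\gamma}_{ij}$, so that the only nonzero components of the inverse metric in the null block are $g^{uv}=g^{vu}=-\tfrac{1}{2}$. Since $h$ does not depend on the spherical variables, the spherical part of $\grad h$ vanishes, and
\[
\grad h \;=\; g^{uv}\,\partial_v h\,\partial_u + g^{vu}\,\partial_u h\,\partial_v \;=\; \tfrac{1}{2u}\,\partial_u - \tfrac{v}{2u^2}\,\partial_v,
\]
which I would then factor as $\tfrac{1}{2}u^{-2}(u\,\partial_u - v\,\partial_v)$, giving the first half of \eqref{eq.h_grad}.

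The two remaining contractions then follow from a one-line pairing. For $\nabla^\alpha h\,\nabla_\alpha h$, I would apply $g$ to $\grad h$ with itself, which reduces to $2g^{uv}\,\partial_u h\,\partial_v h = 2\cdot(-\tfrac{1}{2})\cdot(v/u^2)\cdot(-1/u) = v/u^3$; this matches $-u^{-4}f$ since $f = -uv$. For the mixed pairing, I would use \eqref{eq.f_deriv} and the same metric contraction to obtain
\[
\nabla^\alpha h\,\nabla_\alpha f \;=\; g^{uv}\bigl(\partial_u h\,\partial_v f + \partial_v h\,\partial_u f\bigr) \;=\; -\tfrac{1}{2}\bigl((v/u^2)(-u) + (-1/u)(-v)\bigr) \;=\; 0,
\]
completing \eqref{eq.h_grad}.

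There is no real obstacle here — everything comes down to keeping track of the factor of $-\tfrac{1}{2}$ in $g^{uv}$ and not confusing the sign of $h$. The only place where a slip is possible is in writing $\grad h$: one has to remember that the raised index pairs $\partial_u h$ with $\partial_v$ (and vice versa) because of the off-diagonal structure of the null metric. Once this is handled correctly, both pairings in \eqref{eq.h_grad} drop out immediately and the last identity, $\nabla^\alpha h\nabla_\alpha f = 0$, simply reflects the fact that the level sets of $h$ (spacelike cones through the origin) are $g$-orthogonal to the dilation direction $\grad f$ identified in \eqref{eq.f_grad}.
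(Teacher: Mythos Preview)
Your proposal is correct; the paper treats this lemma as an elementary computation and omits the proof entirely, and the direct verification you give (differentiating $h=-v/u$, reading off $g^{uv}=-\tfrac12$ from \eqref{eq.met_phys}, and contracting) is exactly the intended argument.
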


In particular, observe that \eqref{eq.f_grad} implies the $\mc{F}_\omega$'s are timelike, while \eqref{eq.h_grad} implies the $\mc{H}^\tau$'s are spacelike.
Furthermore, the last identity in \eqref{eq.h_grad} implies that the $\mc{F}_\omega$'s and $\mc{H}^\tau$'s are everywhere orthogonal to each other.

Next, observe the region $\mc{D}^{\sigma, \tau}_{\rho, \omega}$ has piecewise smooth boundary, with
\begin{align}
\label{eq.exhaustion_boundary} \partial \mc{D}^{\sigma, \tau}_{\rho, \omega} = \mc{F}^{\sigma, \tau}_\omega \cup \mc{F}^{\sigma, \tau}_\rho \cup \mc{H}^\tau_{\rho, \omega} \cup \mc{H}^\sigma_{\rho, \omega} \text{,}
\end{align}
hence it is indeed an admissible region.
Also, from \eqref{eq.f_grad} and \eqref{eq.h_grad}, we see that:
\begin{itemize}
\item On $\mc{F}^{\sigma, \tau}_\omega$ and $\mc{F}^{\sigma, \tau}_\rho$, the outer unit normals with respect to $\mc{D}^{\sigma, \tau}_{\rho, \omega}$ are
\begin{align}
\label{eq.exhaustion_normal_f} \mc{N} ( \mc{F}^{\sigma, \tau}_\omega ) = N := f^{-\frac{1}{2}} \grad f \text{,} \qquad \mc{N} ( \mc{F}^{\sigma, \tau}_\rho ) = - N = - f^{-\frac{1}{2}} \grad f \text{.}
\end{align}

\item On $\mc{H}^\tau_{\rho, \omega}$ and $\mc{H}^\sigma_{\rho, \omega}$, the inner unit normals with respect to $\mc{D}^{\sigma, \tau}_{\rho, \omega}$ are
\begin{align}
\label{eq.exhaustion_normal_h} \mc{N} ( \mc{H}^\tau_{\rho, \omega} ) = -T := u^2 f^{-\frac{1}{2}} \grad h \text{,} \qquad \mc{N} ( \mc{H}^\sigma_{\rho, \omega} ) = T = - u^2 f^{-\frac{1}{2}} \grad h \text{.}
\end{align}
\end{itemize}
In view of the above, we obtain the following:

\begin{lemma} \label{thm.divg_fh}
If $P$ is a continuous $1$-form on $\mc{D}$, and if $\mc{N}$ is the oriented unit normal for $\partial \mc{D}^{\sigma, \tau}_{\rho, \omega}$, then the following identity holds:
\begin{align}
\label{eq.divg_fh} \int_{ \partial \mc{D}^{\sigma, \tau}_{\rho, \omega} } P_\beta \mc{N}^\beta &= \int_{ \mc{F}^{\sigma, \tau}_\omega } f^{-\frac{1}{2}} P_\beta \nabla^\beta f - \int_{ \mc{F}^{\sigma, \tau}_\rho } f^{-\frac{1}{2}} P_\beta \nabla^\beta f \\
\notag &\qquad + \int_{ \mc{H}^\tau_{\rho, \omega} } u^2 f^{-\frac{1}{2}} P_\beta \nabla^\beta h - \int_{ \mc{H}^\sigma_{\rho, \omega} } u^2 f^{-\frac{1}{2}} P_\beta \nabla^\beta h \text{.}
\end{align}
\end{lemma}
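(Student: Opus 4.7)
The statement is an explicit unpacking of the Lorentzian divergence identity \eqref{eq.divg_thm} applied to the region $\mc{D}^{\sigma,\tau}_{\rho,\omega}$, and my plan is direct.

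First, I would verify that $\mc{D}^{\sigma,\tau}_{\rho,\omega}$ is admissible in the sense of Definition \ref{def.region_adm}. Its closure lies inside $\mc{D}$ since $f = -uv \ge \rho > 0$ keeps it away from the null cone $\{uv=0\}$, and the finite bounds on $f$ and $h$ keep it bounded in $\R^{n+1}$; its boundary decomposes as in \eqref{eq.exhaustion_boundary}, with the two $\mc{F}$-pieces timelike and the two $\mc{H}$-pieces spacelike by \eqref{eq.f_grad} and \eqref{eq.h_grad}. Consequently \eqref{eq.divg_thm} applies and splits $\int_{\partial \mc{D}^{\sigma,\tau}_{\rho,\omega}} P_\beta \mc{N}^\beta$ into four summands, one per face.

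Second, I would justify the oriented unit normals \eqref{eq.exhaustion_normal_f}--\eqref{eq.exhaustion_normal_h}. On $\mc{F}_\omega$ the function $f$ is constant, so $\grad f$ is conormal, and $\nabla^\alpha f \, \nabla_\alpha f = f > 0$ makes $f^{-1/2}\grad f$ a spacelike unit vector pointing in the direction of increasing $f$. On the outer face $\{f=\omega\}$ this is the outward normal demanded by Definition \ref{def.region_adm}, while on the inner face $\{f=\rho\}$ one must flip the sign, giving \eqref{eq.exhaustion_normal_f}. An analogous argument on the $\mc{H}$-faces, based on $\nabla^\alpha h \, \nabla_\alpha h = -u^{-4}f < 0$ and the convention that $\mc{N}$ be \emph{inward}-pointing on spacelike pieces, yields the signs in \eqref{eq.exhaustion_normal_h}.

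Third, I would plug these four expressions for $\mc{N}$ into the four boundary summands: the outer-$\mc{F}$ contribution becomes $\int_{\mc{F}^{\sigma,\tau}_\omega} f^{-1/2} P_\beta \nabla^\beta f$, the inner-$\mc{F}$ contribution acquires the opposite sign, and the $\mc{H}^\tau$ and $\mc{H}^\sigma$ contributions produce the $u^2 f^{-1/2} P_\beta \nabla^\beta h$ terms with the signs stated in \eqref{eq.divg_fh}. There is no real analytic obstacle here; the only subtlety is the sign and orientation bookkeeping described above, which is fixed unambiguously by the algebraic facts recorded in Lemmas \ref{thm.f_deriv} and \ref{thm.h_deriv}.
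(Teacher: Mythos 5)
Your proof is correct and matches the paper's implicit argument: the paper itself gives no explicit proof, simply stating the lemma immediately after recording the boundary decomposition \eqref{eq.exhaustion_boundary} and the oriented normals \eqref{eq.exhaustion_normal_f}--\eqref{eq.exhaustion_normal_h}, with the phrase ``in view of the above.'' One very small imprecision: the identity \eqref{eq.divg_fh} is purely about the boundary integral, so the divergence theorem \eqref{eq.divg_thm} is not actually invoked; the splitting into four summands follows simply from additivity of the integral over the disjoint decomposition \eqref{eq.exhaustion_boundary}, after which one substitutes the explicit unit normals as you do.
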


Finally, we note that the level sets of $(f, h)$ are simply the level spheres of $(t, r)$, and the values of these functions can be related as follows:

\begin{lemma} \label{thm.spheres_fh}
Given $Q \in \mc{D}$, we have that $( f (Q), h (Q) ) = (\omega, \tau)$ if and only if
\begin{align}
\label{eq.spheres_fh} v (Q) = \omega^\frac{1}{2} \tau^\frac{1}{2} \text{,} &\qquad u (Q) = - \omega^\frac{1}{2} \tau^{-\frac{1}{2}} \text{,} \\
\notag r (Q) = \omega^\frac{1}{2} ( \tau^\frac{1}{2} + \tau^{-\frac{1}{2}} ) \text{,} &\qquad t (Q) = \omega^\frac{1}{2} ( \tau^\frac{1}{2} - \tau^{-\frac{1}{2}} ) \text{.}
\end{align}
\end{lemma}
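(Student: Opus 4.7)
The proof is a straightforward algebraic computation, exploiting the explicit formulas $f = -uv$ and $h = -v/u$ together with the relations $u = \tfrac{1}{2}(t-r)$, $v = \tfrac{1}{2}(t+r)$ from \eqref{eq.null_coord}.

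The plan is to first establish the null-coordinate identities in \eqref{eq.spheres_fh}, and then convert to $(t,r)$. Assuming $f(Q) = \omega$ and $h(Q) = \tau$, the defining relations $-uv = \omega$ and $-v/u = \tau$ give, upon multiplication, $v^2 = \omega \tau$, and upon division, $u^2 = \omega/\tau$. Using the sign convention built into the definition of $\mc{D}$ (namely $u < 0 < v$, which is what the open cone $\{ |t| < r\}$ translates to in the null coordinates \eqref{eq.null_coord}), I would select the roots $v = \omega^{1/2}\tau^{1/2}$ and $u = -\omega^{1/2}\tau^{-1/2}$. A quick check confirms these satisfy both $-uv = \omega$ and $-v/u = \tau$, so the map $(f,h) \mapsto (u,v)$ restricted to $\mc{D}$ is indeed a bijection onto $(0,\infty) \times (0,\infty)$, and the forward/backward implications both hold.

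The passage to $(t,r)$ is then immediate from the inverse of \eqref{eq.null_coord}, namely $r = v - u$ and $t = u + v$; substituting the expressions just obtained yields
\[ r = \omega^{1/2}(\tau^{1/2} + \tau^{-1/2}), \qquad t = \omega^{1/2}(\tau^{1/2} - \tau^{-1/2}), \]
which is the remaining half of \eqref{eq.spheres_fh}. Conversely, plugging these formulas into $f = \tfrac{1}{4}(r^2 - t^2)$ (from \eqref{eq.f}) and $h = (r+t)/(r-t)$ (from \eqref{eq.h}) recovers $(\omega, \tau)$, giving the other direction.

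There is no real obstacle here; the only subtlety worth flagging is the sign choice of the square roots, which is dictated unambiguously by the convention $u < 0 < v$ on $\mc{D}$ used throughout Section \ref{sec:carleman}. The lemma is essentially a restatement that $(f,h)$ serve as a regular hyperbolic coordinate system on $\mc{D}$ orthogonal (by the last identity in \eqref{eq.h_grad}) to the angular directions, which is what is needed for the boundary parametrizations \eqref{eq.level_exhaustion} and the divergence identity \eqref{eq.divg_fh} in the ensuing applications.
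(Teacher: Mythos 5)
Your proof is correct and is the natural elementary verification the lemma calls for; since the paper states Lemma \ref{thm.spheres_fh} without proof, there is no competing argument to compare against. You correctly reduce the problem to solving $-uv=\omega$, $-v/u=\tau$ by forming the product $v^2 = \omega\tau$ and the quotient $u^2 = \omega/\tau$, you correctly invoke the sign constraint $u<0<v$ on $\mc{D}$ to fix the branches of the square roots, and the conversion to $(t,r)$ via $r = v-u$, $t=u+v$ is exactly the inverse of \eqref{eq.null_coord}. The verification of the converse direction and the remark that this amounts to $(f,h)$ being a regular orthogonal coordinate pair on $\mc{D}$ are both accurate.
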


\subsubsection{Boundary Expansions}

In light of Lemma \ref{thm.divg_fh} and the Carleman-type estimates from Section \ref{sec:carleman}, we will need to bound integrands of the form $P_\beta \nabla^\beta f$ and $P_\beta \nabla^\beta h$, where $P$ is one of the currents $P^\pm$ (see \eqref{eq.carleman_lh_current}) or $P^{\pm V}$ (see \eqref{eq.bulk_gamma}).

\begin{lemma} \label{thm.boundary_lh_est}
Let $P^\pm$ be as in \eqref{eq.carleman_lh_current}.
Then, there exists $K > 0$ such that:
\begin{itemize}
\item In the region $\{ f < 1 \}$,
\begin{align}
\label{eq.boundary_l_est} - P^-_\beta \nabla^\beta f &\leq K f^{ 2 (a - b) } [ f \cdot | \nasla \phi |^2 + (n + a)^2 \cdot \phi^2 ] \text{,} \\
\notag | u^2 P^-_\beta \nabla^\beta h | &\leq K f^{ 2 (a - b) } [ ( u \cdot \partial_u \phi )^2 + ( v \cdot \partial_v \phi )^2 + (n + a)^2 \cdot \phi^2 ] \text{.}
\end{align}

\item In the region $\{ f > 1 \}$,
\begin{align}
\label{eq.boundary_h_est} P^+_\beta \nabla^\beta f &\leq K f^{ 2 (a + b) } [ ( u \cdot \partial_u \phi )^2 + ( v \cdot \partial_v \phi )^2 + (n + a)^2 \cdot \phi^2 ] \text{,} \\
\notag | u^2 P^+_\beta \nabla^\beta h | &\leq K f^{ 2 (a + b) } [ ( u \cdot \partial_u \phi )^2 + ( v \cdot \partial_v \phi )^2 + (n + a)^2 \cdot \phi^2 ] \text{.}
\end{align}
\end{itemize}
\end{lemma}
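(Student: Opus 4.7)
The plan is a direct pointwise expansion of each current in null coordinates followed by Young's inequality. Two algebraic identities drive the whole calculation. First, since $\grad f = \frac{1}{2}(u\partial_u + v\partial_v)$ and $g^{uv} = -\frac{1}{2}$, one computes
\begin{align*}
(S\phi)^2 &= \tfrac{1}{4}\,[(u\partial_u\phi)^2 + (v\partial_v\phi)^2] - \tfrac{1}{2}\,f\,\partial_u\phi\,\partial_v\phi, \\
\nabla^\alpha \phi\,\nabla_\alpha \phi &= -\partial_u\phi\,\partial_v\phi + |\nasla\phi|^2.
\end{align*}
Substituting these into the first line of $P^\pm_\beta \nabla^\beta f$ collapses the stress-energy piece to $\frac{1}{4}[(u\partial_u\phi)^2 + (v\partial_v\phi)^2] - \frac{1}{2} f|\nasla\phi|^2$. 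Second, the orthogonality $\nabla^\alpha h \nabla_\alpha f = 0$ from \eqref{eq.h_grad} annihilates the entire third (zero-order) line of $P^\pm$ and the $\frac{1}{2}\nabla_\beta f\,\nabla^\mu\phi\,\nabla_\mu\phi$ summand whenever one contracts instead with $\nabla^\beta h$.

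For the $\nabla^\beta f$ contractions in \eqref{eq.boundary_l_est} and \eqref{eq.boundary_h_est}, the sign conventions are exactly what makes the asymmetric right-hand sides possible: in $-P^-_\beta \nabla^\beta f$ both summands above flip sign, so the derivative part is discarded as negative and the $\frac{1}{2}f|\nasla\phi|^2$ remains with the right sign; conversely, in $P^+_\beta \nabla^\beta f$ the $-\frac{1}{2}f|\nasla\phi|^2$ piece is already negative and dropped, leaving the derivative term on the right. The middle line of $P^\pm$ contributes a cross term $(\frac{n-1}{4} - fF_\pm')\phi\,S\phi$ whose coefficient is $O(n+a)$ by Lemma \ref{thm.F_lh_comp}; Young's inequality then splits it into $O((n+a)^2)\phi^2$ plus a small multiple of $(S\phi)^2$, the latter either absorbed into the already negative $(u\partial_u\phi)^2 + (v\partial_v\phi)^2$ reservoir (for $-P^-$) or subsumed in the corresponding derivative term retained on the right (for $+P^+$). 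The third line yields $e^{-2F_\pm}[(fF_\pm'-\frac{n-1}{4})F_\pm' - \frac{1}{2}bp f^{\mp p - 1}]\,f\,\phi^2$, and Lemma \ref{thm.F_lh_comp} together with $bp \lesssim a^2$ from Lemma \ref{thm.abp_lh_comp} bound this coefficient by $(n+a)^2 f^{-1}$ on the appropriate region. Multiplying through by $e^{-2F_\pm} \leq e f^{2(a\mp b)}$ (top sign for $F_-$ on $\{f<1\}$, bottom for $F_+$ on $\{f>1\}$) produces the two desired estimates.

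For the $\nabla^\beta h$ contractions, the orthogonality strips $u^2 P^\pm_\beta \nabla^\beta h$ down to $e^{-2F_\pm}[S\phi\,S'\phi + (\frac{n-1}{4} - fF_\pm')\phi\,S'\phi]$, where $S'\phi := u^2 \nabla^\beta h \,\nabla_\beta \phi = \frac{1}{2}(u\partial_u\phi - v\partial_v\phi)$. Both $|S\phi|$ and $|S'\phi|$ are controlled in absolute value by $[(u\partial_u\phi)^2 + (v\partial_v\phi)^2]^{1/2}$, so Cauchy--Schwarz on the first term and Young's inequality on the second give the derivative expression on the right together with an $O((n+a)^2)\phi^2$ remainder, and the weight comparison $e^{-2F_\pm} \leq e f^{2(a \mp b)}$ finishes both halves of the claimed bounds.

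The only genuinely delicate step is the bookkeeping for $-P^-_\beta \nabla^\beta f$: one must check that the small multiple of $(S\phi)^2$ arising from Young's inequality, once re-expanded into $(u\partial_u\phi)^2 + (v\partial_v\phi)^2$ and the cross term $f\,\partial_u\phi\,\partial_v\phi$ (which is itself controlled by $\frac{1}{2}[(u\partial_u\phi)^2 + (v\partial_v\phi)^2]$), is dominated by the negative stress-energy contribution with strictly positive leftover, so that no first-order derivatives appear on the right-hand side. The hypothesis $b < a/2$ from \eqref{eq.abp_lh}, used via Lemma \ref{thm.abp_lh_comp}, is what supplies the room needed to close this absorption with explicit numerical constants; everything else is a routine application of the triangle inequality, Young's inequality, and Lemmas \ref{thm.F_lh_deriv}--\ref{thm.F_lh_comp}.
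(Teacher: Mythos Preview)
Your argument is correct and follows essentially the same route as the paper: expand $P^\pm_\beta\nabla^\beta f$ and $u^2 P^\pm_\beta\nabla^\beta h$ in null coordinates, use Young's inequality on the cross term, drop terms of the favorable sign, and invoke Lemmas \ref{thm.abp_lh_comp}--\ref{thm.F_lh_comp} for the weight comparisons. The one cosmetic difference is that the paper applies Young directly to $\phi\cdot(u\partial_u\phi \pm v\partial_v\phi)$ with the weight chosen so that the resulting $\tfrac14[(u\partial_u\phi)^2+(v\partial_v\phi)^2]$ exactly cancels the stress-energy contribution in the $-P^-_\beta\nabla^\beta f$ case; this makes your ``delicate bookkeeping'' paragraph unnecessary, and in particular the absorption does not hinge on $b<a/2$ (that hypothesis enters only through the bound $|fF_\pm'|\lesssim a$ on the coefficients).
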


\begin{proof}
Applying \eqref{eq.f_deriv}, \eqref{eq.f_grad}, \eqref{eq.h_deriv}, and \eqref{eq.h_grad} to the definition \eqref{eq.carleman_lh_current} (and noting in particular that $\grad f$ and $\grad h$ are everywhere orthogonal), we expand
\begin{align}
\label{eql.boundary_lh_est_pre} P^\pm_\beta \nabla^\beta f &= \frac{1}{4} e^{-2 F_\pm} [ ( u \cdot \partial_u \phi )^2 + ( v \cdot \partial_v \phi )^2 ] - \frac{1}{2} e^{-2 F_\pm} f \cdot | \nasla \phi |^2 \\
\notag &\qquad + \frac{1}{2} e^{-2 F_\pm} \left( \frac{n - 1}{4} - f F_\pm' \right) \cdot \phi ( u \cdot \partial_u \phi + v \cdot \partial_v \phi ) \\
\notag &\qquad - e^{-2 F_\pm} \left[ \left( \frac{n - 1}{4} - f F_\pm' \right) f F_\pm' - \frac{1}{2} b p f^{\mp p} \right] \cdot \phi^2 \text{,} \\
\notag u^2 P^\pm_\beta \nabla^\beta h &= \frac{1}{4} e^{-2 F_\pm} [ ( u \cdot \partial_u \phi )^2 - ( v \cdot \partial_v \phi )^2 ] \\
\notag &\qquad + \frac{1}{2} e^{-2 F_\pm} \left( \frac{n - 1}{4} - f F_\pm' \right) \cdot \phi ( u \cdot \partial_u \phi - v \cdot \partial_v \phi ) \text{.}
\end{align}

Next, we note the inequality
\begin{align}
\label{eql.boundary_lh_est_0} &\left| \frac{1}{2} e^{-2 F_\pm} \left( \frac{n - 1}{4} - f F_\pm^\prime \right) \cdot \phi ( u \cdot \partial_u \phi \pm v \cdot \partial_v \phi ) \right| \\
\notag &\quad \leq \frac{1}{4} e^{-2 F_\pm} [ ( u \cdot \partial_u \phi )^2 + ( v \cdot \partial_v \phi )^2 ] + \frac{1}{2} e^{-2 F_\pm} \left( \frac{n - 1}{4} - f F_\pm^\prime \right)^2 \cdot \phi^2 \text{.}
\end{align}
Applying \eqref{eql.boundary_lh_est_0} to each of the identities in \eqref{eql.boundary_lh_est_pre} and then dropping any purely nonpositive terms on the right-hand side, we obtain
\begin{align}
\label{eql.boundary_lh_est_1} P^\pm_\beta \nabla^\beta f &\leq K e^{-2 F_\pm} [ ( u \cdot \partial_u \phi )^2 + ( v \cdot \partial_v \phi )^2 ] \\
\notag &\qquad + K e^{-2 F_\pm} [ n^2 + ( f F_\pm' )^2 + b p f^{\mp p} ] \cdot \phi^2 \text{,} \\
\notag - P^\pm_\beta \nabla^\beta f &\leq K e^{-2 F_\pm} f \cdot | \nasla \phi |^2 + K e^{-2 F_\pm} [ n^2 + ( f F_\pm' )^2 + b p f^{\mp p} ] \cdot \phi^2 \text{,} \\
\notag | u^2 P^\pm_\beta \nabla^\beta h | &\leq K e^{-2 F_\pm} [ ( u \cdot \partial_u \phi )^2 + ( v \cdot \partial_v \phi )^2 ] + K e^{-2 F_\pm} [ n^2 + ( f F_\pm' )^2 ] \cdot \phi^2 \text{.}
\end{align}

Now, recall from Propositions \ref{thm.abp_lh_comp}-\ref{thm.F_lh_comp} that
\begin{align}
\label{eql.boundary_lh_est_2} \begin{cases} f^2 ( F_-' )^2 + b p f^p \lesssim a^2 \text{,} \quad e^{-2 F_-} \lesssim f^{ 2 (a - b) } & \qquad f < 1 \text{,} \\ f^2 ( F_+' )^2 + b p f^{-p} \lesssim a^2 \text{,} \quad e^{-2 F_+} \lesssim f^{ 2 (a + b) } & \qquad f > 1 \text{.} \end{cases}
\end{align}
Combining \eqref{eql.boundary_lh_est_1} and \eqref{eql.boundary_lh_est_2} results in both \eqref{eq.boundary_l_est} and \eqref{eq.boundary_h_est}.
\end{proof}

\begin{lemma} \label{thm.boundary_nl_est}
Let $P^{\pm V}$ be as in \eqref{eq.bulk_gamma}.
Then, there exists $K > 0$ such that:
\begin{align}
\label{eq.boundary_nl_est} P^{\pm V}_\beta \nabla^\beta f &\leq K f^{ 2 a } [ ( u \cdot \partial_u \phi )^2 + ( v \cdot \partial_v \phi )^2 + (n + a)^2 \cdot \phi^2 ] \\
\notag &\qquad \pm (p + 1)^{-1} f^{2 a} f V \cdot | \phi |^{p+1} \text{,} \\
\notag - P^{\pm V}_\beta \nabla^\beta f &\leq K f^{ 2 a } [ f \cdot | \nasla \phi |^2 + (n + a)^2 \cdot \phi^2 ] \mp (p + 1)^{-1} f^{2 a} f V \cdot | \phi |^{p+1} \text{,} \\
\notag | u^2 P^{\pm V}_\beta \nabla^\beta h | &\leq K f^{ 2 a } [ ( u \cdot \partial_u \phi )^2 + ( v \cdot \partial_v \phi )^2 + (n + a)^2 \cdot \phi^2 ] \text{.}
\end{align}
\end{lemma}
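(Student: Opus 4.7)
The plan is to follow the same strategy as Lemma \ref{thm.boundary_lh_est}, but with the simpler reparametrization $F_0 = -a\log f$ (for which $e^{-2F_0} = f^{2a}$ and $fF_0' \equiv -a$) and one new contribution arising from the $\pm(p+1)^{-1} f^{2a}\nabla_\beta f \cdot V|\phi|^{p+1}$ term in $P^{\pm V}$. First I would substitute \eqref{eq.f_deriv}, \eqref{eq.f_grad}, \eqref{eq.h_deriv}, and \eqref{eq.h_grad} into \eqref{eq.bulk_gamma} to obtain explicit null-coordinate expressions for $P^{\pm V}_\beta \nabla^\beta f$ and $u^2 P^{\pm V}_\beta \nabla^\beta h$. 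The contraction $\nabla^\beta f \cdot \nabla_\beta f = f$ produces the explicit $\pm(p+1)^{-1} f^{2a} f\cdot V|\phi|^{p+1}$ contributions that appear in the first two inequalities of \eqref{eq.boundary_nl_est}, while the stress-energy piece yields $\tfrac{1}{4} f^{2a}[(u\partial_u\phi)^2+(v\partial_v\phi)^2] - \tfrac{1}{2} f^{2a} f \cdot |\nasla\phi|^2$ (the $uv$-cross terms in $(S\phi)^2$ cancel those from $-\tfrac{f}{2}\nabla^\mu\phi\nabla_\mu\phi$).

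A key observation is that the orthogonality $\nabla^\alpha h\,\nabla_\alpha f = 0$ from Lemma \ref{thm.h_deriv} annihilates both the $V|\phi|^{p+1}$-term and the $\phi^2$-term (both of which carry a $\nabla_\beta f$ factor) in the contraction with $\nabla^\beta h$; this directly accounts for the absence of $V$ in the third inequality of \eqref{eq.boundary_nl_est}. The $\nabla^\beta h$-contraction then reduces to $f^{2a} S\phi \cdot T_*\phi + (\tfrac{n-1}{4}+a) f^{2a}\phi\, T_*\phi$, where $T_*\phi := \tfrac{1}{2}(u\partial_u \phi - v\partial_v \phi) = u^2 \nabla^\beta h \nabla_\beta \phi$, and $S\phi \cdot T_*\phi = \tfrac{1}{4}[(u\partial_u\phi)^2 - (v\partial_v\phi)^2]$.

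The remaining step is weighted AM--GM of the form
\begin{equation*}
\bigl|(\tfrac{n-1}{4}+a)\phi\,S\phi\bigr| \leq \tfrac{1}{4}\bigl[(u\partial_u\phi)^2 + (v\partial_v\phi)^2\bigr] + C(n+a)^2\phi^2,
\end{equation*}
and analogously with $T_*$ in place of $S$, together with dropping any terms whose sign is already favorable. For the first (upper) inequality we drop the nonpositive $-\tfrac{1}{2} f^{2a} f |\nasla\phi|^2$; for the second inequality, after flipping signs, the AM--GM is used to absorb the cross term into the derivative-squared terms, which then cancel the $-\tfrac{1}{4}$-coefficient derivative-squared terms produced by negating the stress-energy piece, leaving only the $f|\nasla\phi|^2$ and $(n+a)^2\phi^2$ contributions on the right; the $V|\phi|^{p+1}$ contribution carries through with the opposite sign $\mp$. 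I expect no genuine obstacle here: compared to Lemma \ref{thm.boundary_lh_est}, the analysis is cleaner because $G_{F_0}\equiv 0$ and no $b,p$ correction terms need tracking, so that the coefficient bound $f^2(F_0')^2 = a^2$ replaces \eqref{eql.boundary_lh_est_2}, and the main care lies simply in tracking the sign pattern $\pm$ versus $\mp$ in the $V$-dependent pieces.
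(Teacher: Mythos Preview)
Your proposal is correct and follows essentially the same approach as the paper: expand $P^{\pm V}_\beta \nabla^\beta f$ and $u^2 P^{\pm V}_\beta \nabla^\beta h$ in null coordinates via \eqref{eq.f_deriv}, \eqref{eq.f_grad}, \eqref{eq.h_deriv}, \eqref{eq.h_grad}, then handle the $\phi\,(u\partial_u\phi \pm v\partial_v\phi)$ cross-terms by the AM--GM inequality analogous to \eqref{eql.boundary_lh_est_0} and drop terms of favorable sign. Your write-up is in fact more detailed than the paper's, which simply records the expansions \eqref{eql.boundary_nl_est_pre} and defers to the argument of Lemma~\ref{thm.boundary_lh_est}.
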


\begin{proof}
The proof is analogous to that of Lemma \ref{thm.boundary_lh_est}.
Applying \eqref{eq.f_deriv}, \eqref{eq.f_grad}, \eqref{eq.h_deriv}, and \eqref{eq.h_grad} to \eqref{eq.bulk_gamma} results in the expansions
\begin{align}
\label{eql.boundary_nl_est_pre} P^{\pm V}_\beta \nabla^\beta f &= \frac{1}{4} f^{2 a} [ ( u \cdot \partial_u \phi )^2 + ( v \cdot \partial_v \phi )^2 ] - \frac{1}{2} f^{2 a} f \cdot | \nasla \phi |^2 \\
\notag &\qquad + \frac{1}{2} \left( \frac{n - 1}{4} + a \right) f^{2 a} \cdot \phi ( u \cdot \partial_u \phi + v \cdot \partial_v \phi ) \\
\notag &\qquad + a \left( \frac{n - 1}{4} + a \right) f^{2 a} \cdot \phi^2 \pm \frac{1}{p + 1} f^{2 a} f V \cdot | \phi |^{p+1} \text{,} \\
\notag u^2 P^{\pm V}_\beta \nabla^\beta h &= \frac{1}{4} f^{2 a} [ ( u \cdot \partial_u \phi )^2 - ( v \cdot \partial_v \phi )^2 ] \\
\notag &\qquad + \frac{1}{2} \left( \frac{n - 1}{4} + a \right) f^{2 a} \cdot \phi ( u \cdot \partial_u \phi - v \cdot \partial_v \phi ) \text{.}
\end{align}
Handling the cross-terms in \eqref{eql.boundary_nl_est_pre} using an analogue of \eqref{eql.boundary_lh_est_0} yields \eqref{eq.boundary_nl_est}.
\end{proof}

\subsection{Boundary Limits}

In order to convert our main estimates over the $\mc{D}^{\sigma, \tau}_{\rho, \omega}$'s into a unique continuation result, we will need to eliminate the resulting boundary terms.
To do this, we must take the limit of the boundary terms toward null infinity and the null cone about the origin, i.e., the boundary of $\mc{D}$.
More specifically, we wish to let $(\sigma, \tau) \rightarrow (0, \infty)$, and then $(\rho, \omega) \rightarrow (0, \infty)$.

\subsubsection{Coarea Formulas}

To obtain these necessary limits, we will need to express integrals over the $\mc{F}_\omega$'s and $\mc{H}^\tau$'s more explicitly.
For this, we derive coarea formulas below in order to rewrite these expressions in terms of spherical integrals.

In what follows, we will assume that integrals over $\Sph^{n-1}$ will always be with respect to the volume form associated with the (unit) round metric $\mathring{\gamma}$.

We can foliate $\mc{F}_\omega$ by level sets of $t$, which are $(n-1)$-spheres.
In other words,
\begin{align}
\label{eq.f_comp} \mc{F}_\omega \simeq \R \times \Sph^{n-1} \text{,}
\end{align}
where the $\R$-component corresponds to the $t$-coordinate, while the $\Sph^{n-1}$-component is the spherical value (as an element of a level set of $(t, r)$).
Furthermore, by \eqref{eq.spheres_fh}, restricting the correspondence \eqref{eq.f_comp} to finite cutoffs yields
\begin{align}
\label{eq.f_comp_cutoff} \mc{F}^{\sigma, \tau}_\omega \simeq ( \omega^\frac{1}{2} ( \sigma^\frac{1}{2} - \sigma^{-\frac{1}{2}} ), \omega^\frac{1}{2} ( \tau^\frac{1}{2} - \tau^{-\frac{1}{2}} ) ) \times \Sph^{n-1} \text{.}
\end{align}

We now wish to split integrals over $\mc{F}_\omega$ as in \eqref{eq.f_comp_cutoff}: as an integral first over $\Sph^{n-1}$ and then over $t$.
For convenience, we define, for any $\Psi \in \mc{C}^\infty ( \mc{D} )$, the shorthands
\begin{align}
\label{eq.f_int} \int_{ h = \sigma }^{ h = \tau } \int_{ \Sph^{n-1} } \Psi |_{ f = \omega } dt &:= \int_{ \omega^\frac{1}{2} ( \sigma^\frac{1}{2} - \sigma^{-\frac{1}{2}} ) }^{ \omega^\frac{1}{2} ( \tau^\frac{1}{2} - \tau^{-\frac{1}{2}} ) } \left[ \int_{ \Sph^{n-1} } \Psi |_{ (f, t) = (\omega, s) } \right] ds \text{,} \\
\notag \int_{-\infty}^\infty \int_{ \Sph^{n-1} } \Psi |_{ f = \omega } dt &:= \int_{-\infty}^\infty \left[ \int_{ \Sph^{n-1} } \Psi |_{ (f, t) = (\omega, s) } \right] ds \text{,}
\end{align}
with the second equation defined only when $\Psi$ is sufficiently integrable.
Note these are the integrals over $\mc{F}_\omega$ and $\mc{F}^{\sigma, \tau}_\omega$, respectively, in terms of level spheres of $t$.

\begin{proposition} \label{thm.coarea_f}
For any $\Psi \in \mc{C}^\infty ( \mc{D} )$, we have the identity
\begin{align}
\label{eq.coarea_f} \int_{ \mc{F}^{\sigma, \tau}_\omega } \Psi &= 2 \omega^\frac{1}{2} \int_{ h = \sigma }^{ h = \tau } \int_{ \Sph^{n-1} } \Psi r^{n-2} |_{ f = \omega } dt \text{.}
\end{align}
\end{proposition}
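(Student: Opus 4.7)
The plan is to pull the Minkowski metric back to $\mc{F}_\omega$ in adapted coordinates, read off the induced volume form, and then translate the $h$-cutoffs into $t$-bounds via Lemma \ref{thm.spheres_fh}. First I would parametrize $\mc{F}_\omega$ by $(t,\theta) \in \R \times \Sph^{n-1}$ via $r = (t^2 + 4\omega)^{1/2}$ (so that $r^2 - t^2 = 4\omega$, i.e.\ $f = \omega$), which gives $dr|_{\mc{F}_\omega} = (t/r)\,dt$. Substituting into the Minkowski metric \eqref{eq.met_mink} yields the induced (Lorentzian) metric
\[
g|_{\mc{F}_\omega} = \left( \frac{t^2}{r^2} - 1 \right) dt^2 + r^2 \mathring{\gamma} = -\frac{4\omega}{r^2}\,dt^2 + r^2 \mathring{\gamma}.
\]

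Since this metric is block-diagonal in $(t, \theta)$, the absolute value of its determinant is $(4\omega/r^2) \cdot r^{2(n-1)} \det\mathring{\gamma} = 4\omega\, r^{2(n-2)} \det\mathring{\gamma}$, so the induced volume form on $\mc{F}_\omega$ is $2\omega^{1/2} r^{n-2}\,dt\,d\mathring{\gamma}$. This produces precisely the prefactor $2\omega^{1/2} r^{n-2}$ appearing on the right-hand side of \eqref{eq.coarea_f}.

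Second, I would identify the $t$-range on $\mc{F}_\omega$ corresponding to $\sigma < h < \tau$. By Lemma \ref{thm.spheres_fh}, a point of $\mc{F}_\omega$ has $h(Q) = \tau$ precisely when $t(Q) = \omega^{1/2}(\tau^{1/2} - \tau^{-1/2})$, and the map $\tau \mapsto \omega^{1/2}(\tau^{1/2} - \tau^{-1/2})$ is strictly increasing in $\tau > 0$ (its derivative is $\tfrac{1}{2}\omega^{1/2}(\tau^{-1/2} + \tau^{-3/2}) > 0$). Hence the cut $\{\sigma < h < \tau\}$ on $\mc{F}_\omega$ corresponds bijectively to the $t$-interval appearing in the shorthand \eqref{eq.f_int}, and combining this identification with the volume form computation and Fubini gives \eqref{eq.coarea_f}.

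The only potentially delicate point is making sure the factor from the induced volume form on a timelike hypersurface comes out correctly. Because the volume form depends only on the absolute value of the metric determinant, the Lorentzian signature of $g|_{\mc{F}_\omega}$ introduces no sign issue beyond taking that absolute value; everything else reduces to direct substitution.
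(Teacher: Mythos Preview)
Your proof is correct and follows essentially the same route as the paper: both compute the induced volume element on $\mc{F}_\omega$ in $(t,\theta)$-coordinates and then use Lemma \ref{thm.spheres_fh} to convert the $h$-cutoffs into the stated $t$-interval. The only cosmetic difference is that the paper phrases the volume computation via the coarea formula---computing $|Dt|_g = \tfrac{1}{2} f^{-1/2} r$ on the hypersurface and multiplying its reciprocal by the sphere area element $r^{n-1}\,d\mathring{\gamma}$---whereas you pull back $g$ directly and read off $\sqrt{|\det|} = 2\omega^{1/2} r^{n-2}$; the two computations are equivalent.
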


\begin{proof}
Let $D t$ denote the gradient of $t$ on $\mc{F}_\omega$, with respect to the metric induced by $g$.
By definition, $D t$ is tangent to $\mc{F}_\omega$ and normal to the level spheres of $(t, r)$.
The vector field $T$, defined in \eqref{eq.exhaustion_normal_h}, satisfies these same properties due to \eqref{eq.h_grad}, so $D t$ and $T$ point in the same direction.
Since $T$ is unit, it follows from \eqref{eq.exhaustion_normal_h} that
\begin{align}
\label{eql.grad_t_f} | D t |_g &= | g ( T, D t ) | = \frac{1}{2} \left( \sqrt{ \frac{-u}{v} } + \sqrt{ \frac{v}{-u} } \right) = \frac{1}{2} f^{-\frac{1}{2}} r \text{.}
\end{align}
Applying the coarea formula and \eqref{eq.spheres_fh} yields
\begin{align}
\label{eql.coarea_f_0} \int_{ \mc{F}^{\sigma, \tau}_\omega } \Psi &= \int_{ \omega^\frac{1}{2} ( \sigma^\frac{1}{2} - \sigma^{-\frac{1}{2}} ) }^{ \omega^\frac{1}{2} ( \tau^\frac{1}{2} - \tau^{-\frac{1}{2}} ) } \int_{ \Sph^{n-1} } \Psi | D t |^{-1}_g r^{n-1} |_{ (f, t) = (\omega, s) } ds \text{.}
\end{align}
From \eqref{eql.grad_t_f} and \eqref{eql.coarea_f_0}, we obtain \eqref{eq.coarea_f}.
\end{proof}

We define analogous notations for level sets of $h$.
Any $\mc{H}^\tau$ can be foliated as
\begin{align}
\label{eq.h_comp} \mc{H}^\tau \simeq (0, \infty) \times \Sph^{n-1} \text{,}
\end{align}
where the first component now represents the $r$-coordinate, while the second is again the spherical value.
Moreover, by \eqref{eq.spheres_fh}, the same correspondence yields
\begin{align}
\label{eq.h_comp_cutoff} \mc{H}^\tau_{\rho, \omega} \simeq ( \omega^\frac{1}{2} ( \sigma^\frac{1}{2} - \sigma^{-\frac{1}{2}} ), \omega^\frac{1}{2} ( \tau^\frac{1}{2} - \tau^{-\frac{1}{2}} ) ) \times \Sph^{n-1} \text{.}
\end{align}
We also wish to split integrals over $\mc{H}^\tau$ accordingly.
Thus, for any $\Psi \in \mc{C}^\infty ( \mc{D} )$, we define, in a manner analogous to \eqref{eq.f_int}, the shorthands
\begin{align}
\label{eq.h_int} \int_{ f = \rho }^{ f = \omega } \int_{ \Sph^{n-1} } \Psi |_{ h = \tau } dr &:= \int_{ \rho^\frac{1}{2} ( \tau^\frac{1}{2} + \tau^{-\frac{1}{2}} ) }^{ \omega^\frac{1}{2} ( \tau^\frac{1}{2} + \tau^{-\frac{1}{2}} ) } \left[ \int_{ \Sph^{n-1} } \Psi |_{ (h, r) = (\tau, s) } \right] ds \text{,} \\
\notag \int_0^\infty \int_{ \Sph^{n-1} } \Psi |_{ h = \tau } dr &:= \int_0^\infty \left[ \int_{ \Sph^{n-1} } \Psi |_{ (h, r) = (\tau, s) } \right] ds \text{,}
\end{align}
where the second equation is defined only when $\Psi$ is sufficiently integrable.

\begin{proposition} \label{thm.coarea_h}
For any $\Psi \in \mc{C}^\infty ( \mc{D} )$, we have the identity
\begin{align}
\label{eq.coarea_h} \int_{ \mc{H}^\tau_{\rho, \omega} } \Psi &= 2 \int_{ f = \rho }^{ f = \omega } \int_{ \Sph^{n-1} } f^\frac{1}{2} \Psi r^{n-2} |_{ h = \tau } dr \text{.}
\end{align}
\end{proposition}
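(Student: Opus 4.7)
The plan is to mirror the argument used for Proposition \ref{thm.coarea_f}, which handled integration on the timelike level sets $\mc{F}_\omega$. The roles of $f$ and $h$ swap: the level sets $\mc{H}^\tau$ are now spacelike (by \eqref{eq.h_grad}), and on each $\mc{H}^\tau$ the natural foliation is by level spheres of $r$ (equivalently, level spheres of $(t,r)$, since $\tau$ fixed determines $t$ as a function of $r$ via \eqref{eq.spheres_fh}). Let $Dr$ denote the intrinsic gradient of $r|_{\mc{H}^\tau}$ with respect to the metric induced by $g$ on $\mc{H}^\tau$.

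The key observation is that the vector field $N = f^{-1/2} \grad f$ appearing in \eqref{eq.exhaustion_normal_f} plays the role on $\mc{H}^\tau$ that $T$ played on $\mc{F}_\omega$. Indeed, (i) $N$ is tangent to $\mc{H}^\tau$, because Lemma \ref{thm.h_deriv} gives $\nabla^\alpha f \nabla_\alpha h = 0$; (ii) $N$ is $g$-unit spacelike, since \eqref{eq.f_grad} gives $g(\grad f, \grad f) = f$; and (iii) $N$ is orthogonal to every level sphere of $(t,r)$, since \eqref{eq.f_grad} shows $\grad f$ lies in the $(\partial_u, \partial_v)$-plane, which is $g$-orthogonal to the spherical directions. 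Therefore $N$ points in the same direction as $Dr$, and $|Dr|_g = |g(N, Dr)| = |N(r)|$.

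The computation of $N(r)$ is elementary: using $r = v - u$ and \eqref{eq.f_grad}, one gets $N(r) = f^{-1/2}\,\tfrac{1}{2}(u \cdot \partial_u r + v \cdot \partial_v r) = \tfrac{1}{2} f^{-1/2} r$. The coarea formula applied to $r: \mc{H}^\tau_{\rho,\omega} \to \R$, combined with the parametrization \eqref{eq.h_comp_cutoff} and the identification \eqref{eq.spheres_fh} of the endpoints in $r$ corresponding to $f = \rho$ and $f = \omega$ on $\mc{H}^\tau$, then yields
\begin{align*}
\int_{\mc{H}^\tau_{\rho,\omega}} \Psi &= \int_{ \rho^{1/2}(\tau^{1/2}+\tau^{-1/2}) }^{ \omega^{1/2}(\tau^{1/2}+\tau^{-1/2}) } \int_{\Sph^{n-1}} \Psi \, |Dr|_g^{-1} \, r^{n-1} \big|_{(h,r)=(\tau,s)} \, ds \\
&= 2 \int_{f=\rho}^{f=\omega} \int_{\Sph^{n-1}} f^{1/2} \Psi \, r^{n-2} \big|_{h=\tau} \, dr,
\end{align*}
which is exactly \eqref{eq.coarea_h}. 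There is no substantive obstacle: the only place care is needed is in the verification that $N$ is both tangent to $\mc{H}^\tau$ and orthogonal to the level spheres of $(t,r)$, which both follow directly from Lemma \ref{thm.h_deriv} and \eqref{eq.f_grad}. The symmetry between this argument and the one for Proposition \ref{thm.coarea_f} is exactly the symmetry between $f$ and $h$ encoded in the fact that their level sets form an orthogonal double foliation of $\mc{D}$.
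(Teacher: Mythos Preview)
Your proposal is correct and follows essentially the same approach as the paper's own proof: both use the unit vector $N = f^{-1/2}\grad f$ (tangent to $\mc{H}^\tau$ and normal to the level spheres) to compute $|Dr|_g = \tfrac{1}{2} f^{-1/2} r$, then apply the coarea formula together with \eqref{eq.spheres_fh}. The paper presents the computation of $|Dr|_g$ via the expression $\tfrac{1}{2}(\sqrt{-u/v} + \sqrt{v/(-u)})$, while you compute $N(r)$ directly from $r = v - u$ and \eqref{eq.f_grad}; these are the same calculation.
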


\begin{proof}
The proof is analogous to that of \eqref{eq.coarea_f}.
Let $D r$ denote the gradient of $r$ on the level sets of $h$.
By \eqref{eq.h_grad}, both $D r$ and $N$, as defined in \eqref{eq.exhaustion_normal_f}, are tangent to the level sets of $h$ and are normal to the level spheres of $(t, r)$.
Thus,
\begin{align}
\label{eql.grad_r_h} | D r |_g &= | g ( N, D r ) | = \frac{1}{2} \left( \sqrt{ \frac{-u}{v} } + \sqrt{ \frac{v}{-u} } \right) = \frac{1}{2} f^{-\frac{1}{2}} r \text{.}
\end{align}
The result now follows from the coarea formula, \eqref{eq.spheres_fh}, and \eqref{eql.grad_r_h}.
\end{proof}

\subsubsection{The Conformal Inversion} \label{sec:uc_boundary_inversion}

While \eqref{eq.coarea_f} provides a formula for integrals over level sets of $f$, it is 
poorly adapted for the limit $f \nearrow \infty$ toward null infinity.
To handle this limit precisely, we make use of the standard conformal inversion of Minkowski spacetime to identify null infinity with the null cone about the origin.

Consider the conformally inverted metric,
\begin{align}
\label{eq.met_inversion} \gi := f^{-2} g = -4 f^{-2} du dv + f^{-2} r^2 \mathring{\gamma} \text{.}
\end{align}
Furthermore, define the inverted null coordinates
\begin{align}
\label{eq.uv_inv} \ui := - \frac{1}{v} = f^{-1} u \text{,} \qquad \vi := - \frac{1}{u} = f^{-1} v \text{,}
\end{align}
as well as the inverted time and radial parameters
\begin{align}
\label{eq.tr_inv} \ti := \vi + \ui = f^{-1} t \text{,} \qquad \ri := \vi - \ui = f^{-1} r \text{,}
\end{align}
The inverted counterparts of the hyperbolic functions $f$ and $h$ are simply
\begin{align}
\label{eq.fh_inv} \ffi := - \ui \vi = f^{-1} \text{,} \qquad \hhi := - \frac{\vi}{\ui} = h \text{.}
\end{align}
From \eqref{eq.met_inversion}-\eqref{eq.tr_inv}, we see that in terms of the inverted null coordinates,
\begin{equation}
\label{eq.met_inv} \gi = -4 d \ui d \vi + \ri^2 \mathring{\gamma} \text{.}
\end{equation}
In other words, $\gi$, in these new coordinates, is once again the Minkowski metric.

Note that $\mc{D}$ has an identical characterization in this inverted setting:
\begin{align}
\label{eq.region_inv} \mc{D} = \{ Q\in \R^{n+1} \mid \ui (Q) < 0 \text{, } \vi (Q) > 0 \} \text{.}
\end{align}
Moreover, by \eqref{eq.fh_inv}, the outer half of null infinity in the physical setting, $f \nearrow \infty$, corresponds to the null cone about the origin in the inverted setting, $\ffi \searrow 0$.
Thus, this inversion provides a useful tool for discussing behaviors near or at infinity.

We now derive a ``dual" coarea formula for level sets of $f$:

\begin{proposition} \label{thm.coarea_fh_inv}
For any $\Psi \in \mc{C}^\infty ( \mc{D} )$, we have
\begin{align}
\label{eq.coarea_f_inv} \int_{ \mc{F}^{\sigma, \tau}_\omega } \Psi &= 2 \omega^{ n - \frac{1}{2} } \int_{ \hhi = \sigma }^{ \hhi = \tau } \int_{ \Sph^{n-1} } \Psi \ri^{n-2} |_{ \ffi = \omega^{-1} } d\ti \text{.}
\end{align}
\end{proposition}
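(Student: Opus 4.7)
The plan is to reduce Proposition \ref{thm.coarea_fh_inv} to Proposition \ref{thm.coarea_f} by invoking the conformal inversion described in Section \ref{sec:uc_boundary_inversion}, which maps the outer half of null infinity to the null cone about the origin. The key observation is that, in view of \eqref{eq.met_inv}, the inverted metric $\gi = f^{-2} g$ is again the Minkowski metric when expressed in the inverted coordinates $(\ui,\vi)$, and that the identifications \eqref{eq.fh_inv} give $\ffi = f^{-1}$, $\hhi = h$. Thus the level set $\mc{F}_\omega = \{f = \omega\}$ coincides with the level set $\{\ffi = \omega^{-1}\}$, and the cutoff condition $\sigma < h < \tau$ is identical to $\sigma < \hhi < \tau$.

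First I would write down how the two induced volume forms on $\mc{F}^{\sigma,\tau}_\omega$ are related. Since $\gi = f^{-2} g$ and $\mc{F}^{\sigma,\tau}_\omega$ is an $n$-dimensional hypersurface, the restricted metrics satisfy $\gi|_{\mc{F}^{\sigma,\tau}_\omega} = f^{-2} g|_{\mc{F}^{\sigma,\tau}_\omega}$, and consequently the induced volume forms obey
\[
d\mathrm{vol}_{\gi|_{\mc{F}^{\sigma,\tau}_\omega}} = f^{-n}\, d\mathrm{vol}_{g|_{\mc{F}^{\sigma,\tau}_\omega}} = \omega^{-n}\, d\mathrm{vol}_{g|_{\mc{F}^{\sigma,\tau}_\omega}},
\]
where in the last step I used that $f \equiv \omega$ on $\mc{F}^{\sigma,\tau}_\omega$. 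Equivalently, for any $\Psi$,
\[
\int_{\mc{F}^{\sigma,\tau}_\omega} \Psi\, d\mathrm{vol}_g = \omega^n \int_{\mc{F}^{\sigma,\tau}_\omega} \Psi\, d\mathrm{vol}_{\gi}.
\]

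Next I would apply Proposition \ref{thm.coarea_f} directly in the inverted picture. Because $\gi$ in the coordinates $(\ui,\vi,\omega)$ is Minkowski with the same structure as $g$ in $(u,v,\omega)$, and because $(\ffi,\hhi,\ti,\ri)$ play the roles of $(f,h,t,r)$ in that picture, the identical derivation yields
\[
\int_{\mc{F}^{\sigma,\tau}_\omega} \Psi\, d\mathrm{vol}_{\gi} = 2 (\omega^{-1})^{\frac{1}{2}} \int_{\hhi = \sigma}^{\hhi = \tau} \int_{\Sph^{n-1}} \Psi\, \ri^{n-2}\, |_{\ffi = \omega^{-1}}\, d\ti,
\]
where one applies \eqref{eq.coarea_f} to the hypersurface $\{\ffi = \omega^{-1}\}$ with cutoff $\sigma < \hhi < \tau$. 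Multiplying this identity by $\omega^n$ and using the volume form relation from the previous step produces the stated formula \eqref{eq.coarea_f_inv}.

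The only delicate point is checking that Proposition \ref{thm.coarea_f} genuinely transfers to the inverted setting without change. This is essentially automatic, since the derivation of \eqref{eq.coarea_f} used only the Minkowski form of the metric in $(u,v)$-spherical coordinates and the algebraic relations \eqref{eq.spheres_fh} between $(f,h)$ and $(t,r)$; in view of \eqref{eq.met_inv}, \eqref{eq.tr_inv}, and \eqref{eq.fh_inv}, the same relations hold between $(\ffi,\hhi)$ and $(\ti,\ri)$ under the inverted metric. Alternatively, one can bypass the inversion entirely and prove \eqref{eq.coarea_f_inv} directly by the coarea formula applied to the function $\ti = t/\omega$ on $\mc{F}_\omega$: its $g$-gradient satisfies $|D\ti|_g = \tfrac{1}{2}\omega^{-3/2} r$, while $r^{n-1}|D\ti|_g^{-1} = 2\omega^{3/2} r^{n-2} = 2\omega^{n-1/2} \ri^{n-2}$ on $\{f=\omega\}$, which reproduces \eqref{eq.coarea_f_inv} after identifying the range of $\ti$ with that of $\hhi \in (\sigma,\tau)$ via \eqref{eq.spheres_fh}. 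I expect the conformal-inversion route to be the more conceptually transparent one, and no step is genuinely difficult; the main care is simply in tracking the powers of $\omega$ from the conformal rescaling.
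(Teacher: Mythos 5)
Your proposal is correct and follows essentially the same route as the paper: the paper's proof likewise observes that the induced volume forms on $\mc{F}^{\sigma,\tau}_\omega$ under $g$ and $\gi=f^{-2}g$ differ by the factor $f^n=\omega^n$ and then applies Proposition \ref{thm.coarea_f} in the inverted picture, yielding the same bookkeeping $\omega^n\cdot 2\omega^{-1/2}=2\omega^{n-1/2}$. Your alternative direct check via $|D\ti|_g=\tfrac12\omega^{-3/2}r$ is a nice consistency verification but is not needed, and the main argument is identical to the paper's.
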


\begin{proof}
Recalling the relations \eqref{eq.fh_inv}, and observing that the volume forms induced by $g$ and $\gi$ on $\mc{F}^{\sigma, \tau}_\omega$ differ by a factor of $f^n$, we obtain that
\begin{align}
\label{eql.coarea_f_inv_1} \int_{ \mc{F}^{\sigma, \tau}_\omega } \Psi = \omega^n \int_{ ( \bar{\mc{F}}^{\sigma, \tau}_{ 1/\omega }, \gi ) } \Psi \text{,}
\end{align}
where right-hand integral is with respect to the $\gi$-volume form.
Applying \eqref{eq.coarea_f} to the right-hand side of \eqref{eql.coarea_f_inv_1} with respect to $\gi$ yields \eqref{eq.coarea_f_inv}.
\end{proof}

\begin{remark}
In fact, one can obtain corresponding unique continuation results from the null cone by applying the main theorems, such as Theorem \ref{thm.uc_finite}, to the inverted Minkowski spacetime $(\R^{1+n}, \gi)$, and then expressing all objects back in terms of $g$.
\end{remark}

\subsubsection{The Boundary Limit Lemmas}

We now apply the preceding coarea formulas to obtain the desired boundary limits.
We begin with the level sets of $h$.

\begin{lemma} \label{thm.coarea_h_zero}
Fix $\delta > 0$ and $0 < \rho < \omega$, and suppose $\Psi \in \mc{C}^0 (\mc{D})$ satisfies
\begin{align}
\label{eq.coarea_h_zero_ass} \sup_{ \mc{D} } ( r^{n - 1 + \delta} | \Psi | ) < \infty \text{.}
\end{align}
Then, the following limits hold:
\begin{align}
\label{eq.coarea_h_zero} \lim_{ \tau \nearrow \infty } \int_{ \mc{H}^\tau_{\rho, \omega} } | \Psi | = \lim_{ \sigma \searrow 0 } \int_{ \mc{H}^\sigma_{\rho, \omega} } | \Psi | = 0 \text{.}
\end{align}
\end{lemma}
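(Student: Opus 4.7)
The plan is a direct computation via the coarea formula from Proposition \ref{thm.coarea_h}, together with the sphere-parametrization relations \eqref{eq.spheres_fh}. First I would apply \eqref{eq.coarea_h} to rewrite
\begin{equation*}
\int_{ \mc{H}^\tau_{\rho, \omega} } | \Psi | = 2 \int_{ f = \rho }^{ f = \omega } \int_{ \Sph^{n-1} } f^\frac{1}{2} | \Psi | r^{n-2} |_{ h = \tau } \, dr \text{,}
\end{equation*}
and then substitute the hypothesis \eqref{eq.coarea_h_zero_ass}, which gives $| \Psi | \lesssim r^{-(n-1+\delta)}$. The pointwise $r^{n-2}$ factor cancels all but one inverse power of $r$, leaving an integrand of the form $f^{1/2} r^{-1-\delta}$ on $\mc{H}^\tau \cap \{\rho < f < \omega\}$.

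Next I would use the explicit relation \eqref{eq.spheres_fh}, namely $r = f^{1/2}(\tau^{1/2} + \tau^{-1/2})$ on $\mc{H}^\tau$, to eliminate $f$ in favor of $r$. Writing $\alpha := \tau^{1/2} + \tau^{-1/2}$, we obtain $f^{1/2} = r/\alpha$, so that the integrand becomes $r^{-\delta}/\alpha$, and the $r$-integration runs from $\rho^{1/2} \alpha$ to $\omega^{1/2}\alpha$. A linear change of variable $s = r/\alpha$ then yields the clean bound
\begin{equation*}
\int_{ \mc{H}^\tau_{\rho, \omega} } | \Psi | \lesssim \alpha^{-\delta} \int_{ \rho^\frac{1}{2} }^{ \omega^\frac{1}{2} } s^{-\delta} \, ds \text{,}
\end{equation*}
with a constant depending only on $\sup_{\mc{D}} r^{n-1+\delta}|\Psi|$ and $|\Sph^{n-1}|$. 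The $s$-integral is finite (since $0 < \rho < \omega$), and the prefactor $\alpha^{-\delta}$ tends to $0$ both as $\tau \nearrow \infty$ and as $\sigma \searrow 0$, since in either limit $\alpha \to \infty$. This delivers both equalities in \eqref{eq.coarea_h_zero} simultaneously.

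There is essentially no obstacle here: the decay assumption was chosen precisely to beat the $r^{n-2}$ growth of the spherical volume element, while the factor $f^{1/2}$ from the coarea formula is controlled by $r$ via \eqref{eq.spheres_fh}, and the symmetry $\tau \leftrightarrow \tau^{-1}$ of $\alpha$ explains why the same argument handles both boundary limits. The only small point worth noting in writing is that the bound $f^{1/2} \le r/\alpha$ is uniform on $\mc{H}^\tau$, so no additional care is needed in the limit.
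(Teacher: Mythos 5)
Your proof is correct and follows essentially the same route as the paper: both apply the coarea formula \eqref{eq.coarea_h} and then use the relation \eqref{eq.spheres_fh} between $r$, $f$, and $\tau$ on $\mc{H}^\tau$ to extract a decaying prefactor. The paper treats the $\tau \nearrow \infty$ and $\sigma \searrow 0$ cases separately via the asymptotics $r \simeq \tau^{1/2}$ and $r \simeq \sigma^{-1/2}$, whereas your change of variables $s = r/\alpha$ with $\alpha = \tau^{1/2} + \tau^{-1/2}$ unifies the two limits, a small stylistic streamlining rather than a different method.
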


\begin{proof}
First, for $\tau \gg 1$, we have from Lemma \ref{thm.spheres_fh} that
\begin{align}
\label{eql.H_est_large} t |_{ \mc{H}^\tau_{\rho, \omega} } \simeq_{\rho, \omega} \tau^\frac{1}{2} \text{,} \qquad r |_{ \mc{H}^\tau_{\rho, \omega} } \simeq_{\rho, \omega} \tau^\frac{1}{2} \text{.}
\end{align}
Applying \eqref{eq.coarea_h}, \eqref{eql.H_est_large}, and the boundedness of $f$ on $\mc{H}^\tau_{\rho, \omega}$, we see that
\begin{align}
\label{eql.coarea_h_est_1} \int_{ \mc{H}^\tau_{\rho, \omega} } | \Psi | &\lesssim \tau^{-\frac{1 + \delta}{2}} \int_{ f = \rho }^{ f = \omega } \int_{ \Sph^{n-1} } ( \tau^\frac{n-1+\delta}{2} | \Psi | ) |_{ h = \tau } dr \\
\notag &\lesssim \tau^{-\frac{\delta}{2}} \sup_{ \mc{D} } ( r^{n-1+\delta} | \Psi | ) \text{.}
\end{align}
Letting $\tau \nearrow \infty$ and recalling \eqref{eq.coarea_h_zero_ass} results in the first limit in \eqref{eq.coarea_h_zero}.

For the remaining limit, we note from Lemma \ref{thm.spheres_fh} that for $0 < \sigma \ll 1$,
\begin{align}
\label{eq.H_est_small} -t |_{ \mc{H}^\sigma_{\rho, \omega} } \simeq_{\rho, \omega} \sigma^{-\frac{1}{2}} \text{,} \qquad r |_{ \mc{H}^\sigma_{\rho, \omega} } \simeq_{\rho, \omega} \sigma^{-\frac{1}{2}} \text{.}
\end{align}
If $\sigma$ is small, then \eqref{eq.coarea_h} and \eqref{eq.H_est_small} yields
\begin{align}
\label{eql.coarea_h_est_2} \int_{ \mc{H}^\sigma_{\rho, \omega} } | \Psi | &\lesssim \sigma^{\frac{1 + \delta}{2}} \int_{ f = \rho }^{ f = \omega } \int_{ \Sph^{n-1} } ( \sigma^{-\frac{n-1+\delta}{2}} | \Psi | ) |_{ h = \tau } dr \\
\notag &\lesssim \sigma^\frac{\delta}{2} \sup_{ \mc{D} } ( r^{n-1+\delta} | \Psi | ) \text{,}
\end{align}
which vanishes by \eqref{eq.coarea_h_zero_ass} as $\sigma \searrow 0$.
\end{proof}

Next, we consider level sets of $f$, which we split into  two statements.

\begin{lemma} \label{thm.coarea_f_zero_low}
Suppose $\Psi \in \mc{C}^0 (\mc{D})$ satisfies
\begin{align}
\label{eq.coarea_f_zero_low_ass} \sup_{ \mc{D} } [ (1 + r)^{n - 1 + \delta} | \Psi | ] < \infty \text{.}
\end{align}
Then, for any $\alpha > 0$, the following limit holds:
\begin{align}
\label{eq.coarea_f_zero_low} \lim_{ \rho \searrow 0 } \lim_{ (\sigma, \tau) \rightarrow (0, \infty) } \int_{ \mc{F}^{\sigma, \tau}_\rho } f^{-\frac{1}{2} + \alpha} | \Psi | = 0 \text{.}
\end{align}
\end{lemma}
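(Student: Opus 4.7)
My plan is to convert the surface integral into an explicit $t$-integral via the coarea formula of Proposition \ref{thm.coarea_f}, extract the factor $\rho^{-1/2+\alpha}$ that arises because $f \equiv \rho$ on $\mc{F}_\rho$, and then bound the remaining integral uniformly in $\rho$ using the decay hypothesis \eqref{eq.coarea_f_zero_low_ass}. The $\rho^\alpha$ left over then forces the limit as $\rho \searrow 0$.

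More precisely, the first step is to apply \eqref{eq.coarea_f} with the integrand $f^{-\frac{1}{2}+\alpha}|\Psi|$. Since $f \equiv \rho$ on $\mc{F}^{\sigma,\tau}_\rho$, the weight $f^{-\frac{1}{2}+\alpha}$ becomes the constant $\rho^{-\frac{1}{2}+\alpha}$, which combines with the prefactor $2\rho^{\frac{1}{2}}$ in \eqref{eq.coarea_f} to produce
\begin{equation*}
\int_{\mc{F}^{\sigma,\tau}_\rho} f^{-\frac{1}{2}+\alpha} |\Psi| \;=\; 2\rho^{\alpha} \int_{h=\sigma}^{h=\tau} \int_{\Sph^{n-1}} |\Psi|\, r^{n-2}\big|_{f=\rho}\, dt \text{.}
\end{equation*}
Since the integrand is nonnegative, monotone convergence lets me send $(\sigma,\tau) \to (0,\infty)$, which by \eqref{eq.f_comp_cutoff} extends the $t$-range to all of $\R$.

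Next I would bound the inner integrand using \eqref{eq.coarea_f_zero_low_ass}: there is some $C > 0$ with $|\Psi| \leq C(1+r)^{-(n-1+\delta)}$ on $\mc{D}$. Combining this with the crude bound $r^{n-2} \leq (1+r)^{n-2}$ gives
\begin{equation*}
|\Psi|\, r^{n-2} \;\leq\; C \, (1+r)^{-1-\delta} \text{.}
\end{equation*}
On $\mc{F}_\rho$, Lemma \ref{thm.spheres_fh} and the relation $r^2 - t^2 = 4f$ yield $r = \sqrt{t^2 + 4\rho} \geq |t|$, so $(1+r)^{-1-\delta} \leq (1+|t|)^{-1-\delta}$; the latter is integrable over $\R$ with a bound independent of $\rho$.

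The main obstacle is merely bookkeeping the two limits carefully: I need the inner limit to be finite and uniformly controlled in $\rho$, so that the outer factor $\rho^\alpha$ actually does its job. Combining the previous steps, I obtain, for some $C' > 0$ independent of $\rho \in (0,1)$,
\begin{equation*}
\lim_{(\sigma,\tau) \to (0,\infty)} \int_{\mc{F}^{\sigma,\tau}_\rho} f^{-\frac{1}{2}+\alpha} |\Psi| \;\leq\; 2 C' \rho^{\alpha} \int_{-\infty}^{\infty} (1+|t|)^{-1-\delta}\, dt \;\leq\; C'' \rho^{\alpha} \text{.}
\end{equation*}
Since $\alpha > 0$, sending $\rho \searrow 0$ produces the desired limit \eqref{eq.coarea_f_zero_low}.
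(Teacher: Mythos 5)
Your proof is correct and follows essentially the same route as the paper: apply the coarea formula \eqref{eq.coarea_f} to pull out the factor $\rho^\alpha$, bound $|\Psi|\, r^{n-2}$ by $(1+r)^{-1-\delta}$ using the hypothesis, use $r > |t|$ on $\mc{D}$ to obtain an integrable majorant independent of $\rho$, and send $\rho \searrow 0$. The only cosmetic difference is that you spell out the monotone convergence step and the identity $r = \sqrt{t^2 + 4\rho}$, which the paper leaves implicit.
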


\begin{proof}
Applying \eqref{eq.coarea_f} and \eqref{eq.coarea_f_zero_low_ass}, we see that
\begin{align}
\label{eql.coarea_f_est_1} \int_{ \mc{F}_\rho^{\sigma, \tau} } f^{-\frac{1}{2} + \alpha} | \Psi | &\lesssim \rho^\alpha \int_{ h = \sigma }^{ h = \tau } \int_{ \Sph^{n-1} } | \Psi | r^{n-2} |_{ f = \omega } dt \\
\notag &\lesssim \rho^\alpha \int_{-\infty}^\infty (1 + r)^{-1 - \delta} dt \text{.}
\end{align}
Since $|t| < r$ on $\mc{D}$, then \eqref{eq.coarea_f_zero_low} follows by letting $(\sigma, \tau) \rightarrow (0, \infty)$ and $\rho \searrow 0$.
\end{proof}

\begin{lemma} \label{thm.coarea_f_zero_high}
Suppose $\Psi \in \mc{C}^0 (\mc{D})$ satisfies
\begin{align}
\label{eq.coarea_f_zero_high_ass} \sup_{ \mc{D} } [ (r + f)^{n - 1 + \delta} | \Psi | ] < \infty \text{.}
\end{align}
Then, for any $0 < \beta < \delta$, the following limit holds:
\begin{align}
\label{eq.coarea_f_zero_high} \lim_{ \omega \nearrow \infty } \lim_{ (\sigma, \tau) \rightarrow (0, \infty) } \int_{ \mc{F}_\omega^{\sigma, \tau} } f^{-\frac{1}{2} + \beta} | \Psi | = 0 \text{.}
\end{align}
\end{lemma}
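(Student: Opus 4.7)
The argument is the natural analogue of the proof of Lemma \ref{thm.coarea_f_zero_low}, but adapted to the null infinity limit $\omega \nearrow \infty$ rather than the origin limit $\rho \searrow 0$. Since $\mc{F}_\omega$ tends (in the compactified picture) toward the outer half of null infinity, the physical coarea formula \eqref{eq.coarea_f} is poorly suited, and one should instead use the inverted coarea formula, Proposition \ref{thm.coarea_fh_inv}. The central observation is that on $\mc{F}_\omega = \{\ffi = \omega^{-1}\}$ one has $f \equiv \omega$, whence $r = f \ri = \omega \ri$, giving the clean identity $r + f = \omega(1+\ri)$.

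The first step is to apply Proposition \ref{thm.coarea_fh_inv} to the integrand $f^{-\frac{1}{2}+\beta}|\Psi|$ and to pull out the constant factor $f^{-\frac{1}{2}+\beta}|_{\mc{F}_\omega} = \omega^{-\frac{1}{2}+\beta}$, obtaining
\[
\int_{\mc{F}^{\sigma,\tau}_\omega} f^{-\frac{1}{2}+\beta}|\Psi| \;=\; 2\,\omega^{n-1+\beta} \int_{\hhi = \sigma}^{\hhi = \tau} \int_{\Sph^{n-1}} |\Psi| \, \ri^{n-2}\bigr|_{\ffi = \omega^{-1}}\, d\ti.
\]
The second step substitutes $r + f = \omega(1+\ri)$ into the hypothesis \eqref{eq.coarea_f_zero_high_ass}, producing the pointwise bound $|\Psi| \leq C\,\omega^{-(n-1+\delta)}(1+\ri)^{-(n-1+\delta)}$ on $\mc{F}_\omega$, and hence
\[
\int_{\mc{F}^{\sigma,\tau}_\omega} f^{-\frac{1}{2}+\beta}|\Psi| \;\leq\; C\,\omega^{\beta-\delta} \int_{\hhi = \sigma}^{\hhi = \tau} \int_{\Sph^{n-1}} \frac{\ri^{n-2}}{(1+\ri)^{n-1+\delta}}\bigg|_{\ffi = \omega^{-1}}\, d\ti.
\]

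The final step is to observe that the remaining integral is uniformly bounded in $(\sigma, \tau, \omega)$: since $|\ti| < \ri$ on $\mc{D}$ and $\ri^{n-2} \leq (1+\ri)^{n-2}$, the integrand is dominated by $(1+|\ti|)^{-1-\delta}$, which is integrable on $\R$ with a finite value independent of $\omega$. Letting $(\sigma,\tau) \to (0,\infty)$ leaves a bound of the form $C'\omega^{\beta-\delta}$, and because $\beta < \delta$, sending $\omega \nearrow \infty$ gives zero. I do not expect a genuine obstacle: once Proposition \ref{thm.coarea_fh_inv} is in place, the sharp identity $r+f = \omega(1+\ri)$ reduces the argument to routine domination, and the power $\omega^{\beta-\delta}$ is manifestly the place where the assumption $\beta < \delta$ is used.
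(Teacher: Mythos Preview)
Your proposal is correct and follows essentially the same approach as the paper: apply the inverted coarea formula \eqref{eq.coarea_f_inv}, use the identity $r+f=\omega(1+\ri)$ on $\mc{F}_\omega$ to extract the factor $\omega^{\beta-\delta}$, and bound the remaining $\ti$-integral via $|\ti|<\ri$. If anything, your write-up is slightly more explicit about the identity $r+f=\omega(1+\ri)$ than the paper, which merely cites \eqref{eq.tr_inv} and \eqref{eq.fh_inv}.
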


\begin{proof}
The main idea is to convert to the inverted setting, in which the estimate becomes analogous to Lemma \ref{thm.coarea_f_zero_low}.
From \eqref{eq.coarea_f_inv}, we obtain
\begin{align}
\label{eql.coarea_f_est_2} \int_{ \mc{F}_\omega^{\sigma, \tau} } f^{-\frac{1}{2} + \beta} | \Psi | &\lesssim \omega^{ n - 1 + \beta } \int_{ \hhi = \sigma }^{ \hhi = \tau } \int_{ \Sph^{n-1} } | \Psi | \ri^{n-2} |_{ \ffi = \omega^{-1} } d\ti \\
\notag &\lesssim \omega^{n -1 + \beta} \int_{-\infty}^\infty 
(r + f)^{- (n - 1 - \delta)} \ri^{n-2} d\ti \text{.}
\end{align}
Recalling now \eqref{eq.tr_inv} and \eqref{eq.fh_inv} yields
\begin{align}
\label{eql.coarea_f_est_3} \int_{ \mc{F}_\omega^{\sigma, \tau} } 
f^{-\frac{1}{2} + \beta} | \Psi | &\lesssim \omega^{n - 1 + \beta}
 \int_{-\infty}^\infty 
(r + f)^{ -(n - 1 + \delta) } \ri^{n-2} d\ti \\
\notag &\lesssim \omega^{n - 1 + \beta} \int_{-\infty}^\infty 
\omega^{ - (n - 1 + \delta) } (1 + \ri)^{ - (n - 1 + \delta) } \ri^{n-2} d\ti \\
\notag &\lesssim \omega^{\beta - \delta} \int_{-\infty}^\infty 
(1 + \ri)^{-1 - \delta} d \ti \text{.}
\end{align}
Since $| \ti | < \ri$, then letting $(\sigma, \tau) \rightarrow (0, \infty)$ and 
$\omega \nearrow \infty$ results in \eqref{eq.coarea_f_zero_high}.
\end{proof}

\subsection{Proof of Theorem \ref{thm.uc_finite}}

Let $p$ be as in the theorem statement, and let
\begin{align}
\label{eql.uc_abp} a = \frac{\beta}{4} + \frac{p}{4} \text{,} \qquad b = \frac{1}{16} \min ( \beta - p, 8 p ) \text{.}
\end{align}
Note in particular that $2 a$ lies between $p$ and $\beta$, and that $a, b, p$ satisfy the conditions \eqref{eq.abp_lh}.
In addition, we fix arbitrary
\begin{align*}
0 < \rho < 1 < \omega < \infty \text{,} \qquad 0 < \sigma < \tau < \infty \text{.}
\end{align*}
The idea is apply Theorem \ref{thm.carleman_lh} to the domain $\mc{D}^{\sigma, \tau}_{\rho, \omega}$.
To split into $f < 1$ and $f > 1$ regions, we partition the above domain into two parts, $\mc{D}^{\sigma, \tau}_{\rho, 1}$ and $\mc{D}^{\sigma, \tau}_{1, \omega}$.

First, applying \eqref{eq.carleman_l} and then \eqref{eq.divg_fh} to $\mc{D}^{\sigma, \tau}_{\rho, 1}$ yields
\begin{align}
\label{eql.uc_finite_100} C b^2 p \int\limits_{ \mc{D}^{\sigma, \tau}_{\rho, 1} } f^{ 2 (a - b) } f^{ p - 1 } \phi^2 &\leq K a^{-1} \int\limits_{ \mc{D}^{\sigma, \tau}_{\rho, 1} } f^{ 2 (a - b) } f | \Box \phi |^2 + \int\limits_{ \mc{F}^{\sigma, \tau}_1 } f^{-\frac{1}{2}} P^-_\beta \nabla^\beta f \\
\notag &\qquad - \int\limits_{ \mc{F}^{\sigma, \tau}_\rho } f^{-\frac{1}{2}} P^-_\beta \nabla^\beta f + \int\limits_{ \mc{H}^\tau_{\rho, 1} } u^2 f^{-\frac{1}{2}} P^-_\beta \nabla^\beta h \\
\notag &\qquad - \int\limits_{ \mc{H}^\sigma_{\rho, 1} } u^2 f^{-\frac{1}{2}} P^-_\beta \nabla^\beta h \text{.}
\end{align}
Observe that the assumption \eqref{eq.uc_potential_decay} for $\mc{V}$ and \eqref{eql.uc_abp} imply that
\begin{align}
\label{eql.uc_finite_110} | \Box \phi |^2 \leq | \mc{V} |^2 \phi^2 \leq B^2 b^2 p^2 f^{-2 + p} \phi^2 \text{,}
\end{align}
whenever $f < 1$.
Applying \eqref{eql.uc_finite_110} to \eqref{eql.uc_finite_100} and noting that $p < 2a$, it follows that if $B$ is sufficiently small, then the first term on the right-hand side of \eqref{eql.uc_finite_100} can be absorbed into the left-hand side, yielding, for some other $C > 0$,
\begin{align}
\label{eql.uc_finite_200} C b^2 p \int\limits_{ \mc{D}^{\sigma, \tau}_{\rho, 1} } f^{ 2 (a - b) } f^{ p - 1 } \phi^2 &\leq - \int\limits_{ \mc{F}^{\sigma, \tau}_\rho } f^{-\frac{1}{2}} P^-_\beta \nabla^\beta f + \int\limits_{ \mc{H}^\tau_{\rho, 1} } u^2 f^{-\frac{1}{2}} P^-_\beta \nabla^\beta h \\
\notag &\qquad - \int\limits_{ \mc{H}^\sigma_{\rho, 1} } u^2 f^{-\frac{1}{2}} P^-_\beta \nabla^\beta h + \int\limits_{ \mc{F}^{\sigma, \tau}_1 } f^{-\frac{1}{2}} P^-_\beta \nabla^\beta f \text{.}
\end{align}

In a similar manner, we apply \eqref{eq.carleman_h} and \eqref{eq.divg_fh} to $\mc{D}^{\sigma, \tau}_{1, \omega}$.
In this region, the assumption \eqref{eq.uc_potential_decay} for $\mc{V}$ and \eqref{eql.uc_abp} imply the estimate
\begin{align}
\label{eql.uc_finite_210} | \Box \phi |^2 \leq | \mc{V} |^2 \phi^2 \leq B^2 b^2 p^2 \cdot f^{-2 - p} 
\phi^2 \text{,}
\end{align}
so that for sufficiently small $B$, we have
\begin{align}
\label{eql.uc_finite_300} C b^2 p \int\limits_{ \mc{D}^{\sigma, \tau}_{1, \omega} } f^{ 2 (a + b) } f^{ -p - 1 } \phi^2 &\leq \int\limits_{ \mc{F}^{\sigma, \tau}_\omega } f^{-\frac{1}{2}} P^+_\beta \nabla^\beta f + \int\limits_{ \mc{H}^\tau_{1, \omega} } u^2 f^{-\frac{1}{2}} P^+_\beta \nabla^\beta h \\
\notag &\qquad - \int\limits_{ \mc{H}^\sigma_{1, \omega} } u^2 f^{-\frac{1}{2}} P^+_\beta \nabla^\beta h - \int\limits_{ \mc{F}^{\sigma, \tau}_1 } f^{-\frac{1}{2}} P^+_\beta \nabla^\beta f \text{.}
\end{align}

Next, we sum \eqref{eql.uc_finite_200} and \eqref{eql.uc_finite_300}.
By \eqref{eq.carleman_lh_match}, the two integrals over $\mc{F}_1^{\sigma, \tau}$ cancel:
\begin{align}
\label{eql.uc_finite_400} &C b^2 p \int\limits_{ \mc{D}^{\sigma, \tau}_{\rho, 1} } f^{ 2 (a - b) } f^{ p - 1 } \phi^2 + C b p^2 \int\limits_{ \mc{D}^{\sigma, \tau}_{1, \omega} } f^{ 2 (a + b) } f^{ -p - 1 } \phi^2 \\
\notag &\quad \leq \int\limits_{ \mc{F}^{\sigma, \tau}_\omega } f^{-\frac{1}{2}} P^+_\beta \nabla^\beta f - \int\limits_{ \mc{F}^{\sigma, \tau}_\rho } f^{-\frac{1}{2}} P^-_\beta \nabla^\beta f + \int\limits_{ \mc{H}^\tau_{\rho, 1} } u^2 f^{-\frac{1}{2}} P^-_\beta \nabla^\beta h \\
\notag &\quad \qquad + \int\limits_{ \mc{H}^\tau_{1, \omega} } u^2 f^{-\frac{1}{2}} P^+_\beta \nabla^\beta h - \int\limits_{ \mc{H}^\sigma_{\rho, 1} } u^2 f^{-\frac{1}{2}} P^-_\beta \nabla^\beta h - \int\limits_{ \mc{H}^\sigma_{1, \omega} } u^2 f^{-\frac{1}{2}} P^+_\beta \nabla^\beta h \\
\notag &\quad = I_1 + I_2 + J_1 + J_2 + J_3 + J_4 \text{.}
\end{align}
It remains to show that each of the terms on the right vanishes in the limit.

First, for $J_1$, we apply \eqref{eq.boundary_l_est}, along with the fact that $f$ is bounded from both above and below on $\mc{H}^\tau_{\rho, 1}$, in order to obtain
\begin{align}
\label{eql.uc_finite_411} | J_1 | &\lesssim \int_{ \mc{H}^\tau_{\rho, 1} } [ ( u \cdot \partial_u \phi )^2 + ( v \cdot \partial_v \phi )^2 + \phi^2 ] \text{.}
\end{align}
An analogous application of \eqref{eq.boundary_h_est} yields
\begin{align}
\label{eql.uc_finite_412} | J_2 | &\lesssim \int_{ \mc{H}^\sigma_{\rho, 1} } [ ( u \cdot \partial_u \phi )^2 + ( v \cdot \partial_v \phi )^2 + \phi^2 ] \text{.}
\end{align}
Recalling our assumption \eqref{eq.wave_decay_ex} for $\phi$ and applying Lemma \ref{thm.coarea_h_zero} yields
\begin{align}
\label{eql.uc_finite_420} \lim_{\tau \nearrow \infty} J_1 = \lim_{\sigma \searrow 0} J_2 = 0 \text{.}
\end{align}
By the same arguments, we also obtain
\begin{align}
\label{eql.uc_finite_430} \lim_{\tau \nearrow \infty} J_3 = \lim_{\sigma \searrow 0} J_4 = 0 \text{.}
\end{align}

For the remaining terms $I_1$ and $I_2$, we take limits first as $(\tau, \sigma) \rightarrow (\infty, 0)$, and then as $(\omega, \rho) \rightarrow (\infty, 0)$.
First, using \eqref{eq.boundary_l_est}, we obtain
\begin{align}
\label{eql.uc_finite_441} | I_2 | \lesssim \int_{ \mc{F}^{\sigma, \tau}_\rho } f^{-\frac{1}{2} + 2 (a - b) } ( f \cdot | \nasla \phi |^2 + \phi^2 ) 
\end{align}
Since $a - b > 0$, then Lemma \ref{thm.coarea_f_zero_low} and the decay assumption \eqref{eq.wave_decay_ex} imply
\begin{align}
\label{eql.uc_finite_450} \lim_{\rho \searrow 0} \lim_{ (\sigma, \tau) \rightarrow (0, \infty) } I_2 = 0 \text{.}
\end{align}
A similar application of \eqref{eq.boundary_h_est} yields
\begin{align}
\label{eql.uc_finite_461} | I_1 | \lesssim \int_{ \mc{F}^{\sigma, \tau}_\omega } f^{-\frac{1}{2} + 2 (a + b) } [ ( u \cdot \partial_u \phi )^2 + ( v \cdot \partial_v \phi )^2 + \phi^2 ]  
\end{align}
Since $2 (a + b) < \beta$, then Lemma \ref{thm.coarea_f_zero_high} and \eqref{eq.wave_decay_ex} yield
\begin{align}
\label{eql.uc_finite_470} \lim_{\omega \nearrow \infty} \lim_{ (\sigma, \tau) \rightarrow (0, \infty) } I_1 = 0 \text{.}
\end{align}

Finally, combining \eqref{eql.uc_finite_400} with the limits \eqref{eql.uc_finite_420}, \eqref{eql.uc_finite_430}, \eqref{eql.uc_finite_450}, \eqref{eql.uc_finite_470} and then applying the monotone convergence theorem, we see that
\begin{align}
\label{eql.uc_finite_500} \int_{ \mc{D} } \alpha \cdot \phi^2 = 0 \text{,}
\end{align}
for some strictly positive function $\alpha$ on $\mc{D}$.
It follows that $\phi$ vanishes everywhere on $\mc{D}$, completing the proof of Theorem \ref{thm.uc_finite}.

\subsection{Proofs of Theorems \ref{thm.uc_focusing} and \ref{thm.uc_defocusing}}

The idea is similar to before, except we apply Theorem \ref{thm.carleman_nl} instead of Theorem \ref{thm.carleman_lh}.
In particular, we let $\Omega = \mc{D}_{\rho, \omega}^{\sigma, \tau}$ (there is no need to partition into two regions in this case), and fix
\begin{align}
\label{eql.uc_abp_nonlinear} 0 < a < \frac{\delta}{2} \text{,}
\end{align}
whose precise value is to be determined later.
Let $V$ be as in the statement of Theorem \ref{thm.uc_focusing} or \ref{thm.uc_defocusing}.
Choosing the sign depending on situation (``$+$" for Theorem \ref{thm.uc_focusing}, or ``$-$" for Theorem \ref{thm.uc_defocusing}), we have that $\Box_V^\pm \phi \equiv 0$.

As a result, applying \eqref{eq.carleman_nl}, \eqref{eq.divg_fh}, and \eqref{eq.boundary_nl_est}, we obtain
\begin{align}
\label{eql.uc_nonlinear_100} \int_{ \mc{D}^{\sigma, \tau}_{\rho, \omega} } f^{2a} V (\pm \Gamma_V) | \phi |^{p + 1} &\leq L \int_{ \mc{F}^{\sigma, \tau}_\omega } f^{2 a} f^{-\frac{1}{2}} [ ( u \cdot \partial_u \phi )^2 + ( v \cdot \partial_v \phi )^2 + \phi^2 ] \\
\notag &\quad \qquad + L \int_{ \mc{F}^{\sigma, \tau}_\rho } f^{2 a} f^{-\frac{1}{2}} [ f \cdot | \nasla \phi |^2 + \phi^2 ] \\
\notag &\quad \qquad + L \int_{ \mc{H}^\tau_{\rho, \omega} } f^{2 a} f^{-\frac{1}{2}} [ ( v \cdot \partial_v \phi )^2 + \phi^2 ] \\
\notag &\quad \qquad + L \int_{ \mc{H}^\sigma_{\rho, \omega} } f^{2 a} f^{-\frac{1}{2}} [ ( u \cdot \partial_u \phi )^2 + \phi^2 ] \\
\notag &\quad \qquad \pm \int_{ \mc{F}^{\sigma, \tau}_\omega } f^{2 a} f^{-\frac{1}{2}} ( f V \cdot | \phi |^{p+1} ) \\
\notag &\quad \qquad \mp \int_{ \mc{F}^{\sigma, \tau}_\rho } f^{2 a} f^{-\frac{1}{2}} ( f V \cdot | \phi |^{p+1} ) \\
\notag &\quad = I_1 + I_2 + J_1 + J_2 + Z_1 + Z_2 \text{,}
\end{align}
for some $L > 0$.
Moreover, by the same arguments as in the proof of Theorem \ref{thm.uc_finite},
\begin{align}
\label{eql.uc_nonlinear_110} \lim_{ (\rho, \omega) \rightarrow (0, \infty) } \lim_{ (\sigma, \tau) \rightarrow (0, \infty) } ( I_1 + I_2 + J_1 + J_2 ) = 0 \text{.}
\end{align}

Suppose first that we are in the defocusing case of Theorem \ref{thm.uc_defocusing}.
Then, $Z_1$ is negative and hence can be discarded, and we need only consider $Z_2$.
Since $V$ is uniformly bounded, and since $\phi$ satisfies \eqref{eq.wave_decay_ex}, then applying \eqref{eq.coarea_f_zero_low} yields
\begin{align}
\label{eql.uc_nonlinear_2neg} \lim_{ \rho \searrow 0 } \lim_{ (\sigma, \tau) \rightarrow (0, \infty) } | Z_2 | &\lesssim \lim_{ \rho \searrow 0 } \lim_{ (\sigma, \tau) \rightarrow (0, \infty) } \int_{ \mc{F}^{\sigma, \tau}_\rho } f^{2 a} f^\frac{1}{2} \cdot | \phi |^{p+1} = 0 \text{.}
\end{align}

On the other hand, in the focusing case of Theorem \ref{thm.uc_focusing}, we can discard $Z_2$ due to sign and consider only $Z_1$.
Since $f \nearrow \infty$, the factor $f$ in $Z_1$ now becomes large, hence the extra decay for $\phi$ in \eqref{eq.wave_decay_focusing} is invoked to show the vanishing of this limit.
Indeed, applying the decay assumptions \eqref{eq.wave_decay_ex}, \eqref{eq.wave_decay_focusing_ex} along with \eqref{eq.coarea_f_zero_high}, we see that
\begin{align}
\label{eql.uc_nonlinear_2pos} \lim_{ \omega \nearrow \infty } \lim_{ (\sigma, \tau) \rightarrow (0, \infty) } | Z_1 | &\lesssim \lim_{ \omega \nearrow \infty } \lim_{ (\sigma, \tau) \rightarrow (0, \infty) } \int_{ \mc{F}^{\sigma, \tau}_\omega } f^{2 a} f^\frac{1}{2} V \cdot | \phi |^{p+1} = 0 \text{.}
\end{align}
Consequently, in both cases, we can do away with all the boundary terms:
\begin{align}
\label{eql.uc_nonlinear_200} \lim_{ (\rho, \omega) \rightarrow (0, \infty) } \lim_{ (\sigma, \tau) \rightarrow (0, \infty) } ( I_1 + I_2 + J_1 + J_2 + Z_1 + Z_2 ) = 0 \text{,}
\end{align}

It remains only to examine the left-hand side of \eqref{eql.uc_nonlinear_100}.
In the defocusing case, the monotonicity assumption \eqref{eq.uc_defocusing_mono} and \eqref{eq.f_grad} imply that
\begin{align}
\label{eql.uc_nonlinear_3neg} \grad f ( \log V ) < \frac{n - 1 + 4a}{4} \left( p - 1 - \frac{4}{n - 1 + 4a} \right) \text{,}
\end{align}
so that $- \Gamma_V$, as defined in \eqref{eq.bulk_gamma}, is strictly positive.
Similarly, in the focusing case, we can further shrink $a > 0$ (depending on $\mu$ in \eqref{eq.uc_focusing_mono}) to guarantee that
\begin{align}
\label{eql.uc_nonlinear_3pos} \grad f ( \log V ) > - \frac{n - 1 + 4a}{4} \left( 1 + \frac{4}{n - 1 + 4a} - p \right) \text{,}
\end{align}
so that $\Gamma_V$ is strictly positive.
Consequently, in both cases, the factor $\pm f^{2a} V \cdot \Gamma_V$ on the left-hand side of \eqref{eql.uc_nonlinear_100} is strictly positive.

Finally, we combine the above positivity of $\pm \Gamma_V$ with \eqref{eql.uc_nonlinear_100} and \eqref{eql.uc_nonlinear_200}, and we apply the monotone convergence theorem.
This yields
\begin{align}
\label{eql.uc_nonlinear_300} \int_\mc{D} \alpha | \phi |^{p + 1} = 0 \text{,}
\end{align}
where $\alpha$ is a strictly positive function on $\mc{D}$.
It follows that $\phi$ vanishes everywhere on $\mc{D}$, which completes the proofs of both Theorems \ref{thm.uc_focusing} and \ref{thm.uc_defocusing}.

\subsection{Proof of Proposition \ref{thm.uc_opt}} \label{sec:uc_opt}

We claim there exists $\psi \in \mc{C}^2 (\R^n)$ satisfying
\begin{align}
\label{eq.opt_equation} ( \Delta + \mc{U} ) \psi = 0 \text{,}
\end{align}
where $\mc{U} \in \mc{C}^\infty (\R^n)$ is compactly supported, and where
\begin{align}
\label{eq.opt_decay} | \psi (t, x) | \lesssim ( 1 + |x| )^{-k} \text{,}
\end{align}
where $k > 0$ is fixed but can be made arbitrarily large.
Assuming this $\psi$, taking
\begin{align*}
\phi (t, x) = \psi (x) \text{,} \qquad \mc{V} (t, x) = \mc{U} (x)
\end{align*}
results in the desired counterexample.
Thus, it remains only to construct $\psi$.

\subsubsection{Construction of $\psi$}

Let $P \in \mc{C}^\infty ( \Sph^{n-1} )$ denote a spherical harmonic, satisying
\begin{align*}
\Delta_{ \Sph^{n-1} } P = - a P \text{,} \qquad a > 0 \text{,}
\end{align*}
and let $\beta \in \mc{C}^\infty (0, \infty)$ be an everywhere positive function satisfying
\begin{align*}
\beta (r) := \begin{cases} r^{q_+} & r < 1 \text{,} \\ r^{q_-} & r > 2 \text{,} \end{cases} \qquad q_\pm := \frac{ - (n - 2) \pm \sqrt{ (n - 2)^2 + 4 a } }{2} \text{.}
\end{align*}
Observe in particular that $q_- < 0 < q_+$, and that $| q_\pm |$ can be made arbitrarily large by making $a$ arbitrarily large.
Thus, if we define, in polar coordinates, the function
\begin{align*}
\psi (r, \omega) := \beta (r) \cdot P (\omega) \text{,}
\end{align*}
then for large enough $a$, this defines a $\mc{C}^2$-function on $\R^n$ satisfying \eqref{eq.opt_decay}.

Finally, we define
\begin{align*}
\mc{U} (r, \omega) := \frac{ - \Delta \psi (r, \omega) }{ \psi (r, \omega) } = \frac{ - \beta'' (r) - (n - 1) r^{-1} \cdot \beta' (r) + a r^{-2} \cdot \beta (r) }{ \beta (r) } \text{,}
\end{align*}
which extends to a smooth function on $\R^n \setminus \{ 0 \}$.
Furthermore, the chosen exponents $q_\pm$ ensure that $\Delta \psi (r, \omega)$ vanishes whenever $r < 1$ or $r > 2$.
As a result, $\mc{U}$ is actually smooth on all of $\R^n$ and has compact support, and \eqref{eq.opt_equation} is satisfied.

\subsection*{Acknowledgments}

We are grateful to Sergiu Klainerman and Alex Ionescu for many helpful conversations, and we thank them for generously sharing their results in \cite{io_kl:private}.
The first author was partially supported by NSERC grants 488916 and 489103, and 
the ERA grant 496588.

\raggedright
\bibliographystyle{amsplain}
\bibliography{bib}

\end{document}